\documentclass[11pt]{amsart}
\usepackage{fullpage}
\usepackage{amsfonts,amssymb,amsmath,amsthm}
\usepackage{hyperref}

\theoremstyle{plain}
\newtheorem{theorem}{Theorem}[section]
 \newtheorem{corollary}[theorem]{Corollary}
 \newtheorem{lemma}[theorem]{Lemma}
 \newtheorem{proposition}[theorem]{Proposition}
 \theoremstyle{definition}
 \newtheorem{definition}[theorem]{Definition}
 
 \theoremstyle{remark}
 \newtheorem{remark}[theorem]{Remark}
 \numberwithin{equation}{section}

\usepackage{mathrsfs}
\def \sL{\mathscr L}
\def \sF {\mathscr F}

\def \bC {\mathbb C}

\def \bN {\mathbb N}

\def \bR {\mathbb R}

\def \bZ {\mathbb Z}

\def \cA {\mathcal A}
\def \cB {\mathcal B}
\def \cC {\mathcal C}
\def \cD {\mathcal D}

\def \cF {\mathcal F}

\def \cH {\mathcal H}

\def \cL {\mathcal L}

\def \cR {\mathcal R}
\def \cS {\mathcal S}
\def \cR {\mathcal R}
\def \cR {\mathcal R}

\def \fg {\mathfrak g}

\def\eps{\varepsilon}
\def\R{{\mathbb R}}
\def\C{{\mathbb C}}
\def\N{{\mathbb N}}
\mathchardef\mhyphen="2D 

\def \Ghat{\widehat{G}}

\def \tr {{\rm tr}}
\def \Op  {{\rm Op}}
\def \id {{\rm id}}


\numberwithin{equation}{section}

\begin{document}
\title[]{Wick-Quantization on Groups and application to G\aa rding inequalities}
\author[L. Benedetto]{Lino Benedetto}
\address[L. Benedetto]{DMA, École normale supérieure, Université PSL, CNRS, 75005 Paris, France \& Univ Angers, CNRS, LAREMA, SFR MATHSTIC, F-49000 Angers, France} 
\email{lbenedetto@dma.ens.fr}
\author[C. Fermanian]{Clotilde~Fermanian~Kammerer}
\address[C. Fermanian Kammerer]{
Univ Paris Est Creteil, Univ Gustave Eiffel, CNRS, LAMA UMR8050, F-94010 Creteil, France \& Univ Angers, CNRS, LAREMA, SFR MATHSTIC, F-49000 Angers, France
}
\email{clotilde.fermanian@u-pec.fr}
\author[V. Fischer]{V\'eronique Fischer}\address[V. Fischer]
{University of Bath, Department of Mathematical Sciences, Bath, BA2 7AY, UK} 
\email{v.c.m.fischer@bath.ac.uk}

\begin{abstract} 
In this paper, we introduce Wick's quantization on groups and discuss its links with Kohn-Nirenberg's. By  quantization, we mean     an operation that associates an operator to a symbol. 
The notion of symbols for both quantizations  is based on representation theory via the group Fourier transform  and the  Plancherel theorem. 
As an application, we  
prove G\aa rding inequalities for three global symbolic pseudodifferential calculi on groups. 
 \end{abstract}

\keywords{Abstract harmonic analysis, 
pseudodifferential calculus on compact and nilpotent Lie groups, G\aa rding inequality.  
}

\maketitle

\makeatletter
\renewcommand\l@subsection{\@tocline{2}{0pt}{3pc}{5pc}{}}
\makeatother

\tableofcontents

\section{Introduction}

In this  paper, we show how,
as for the Kohn-Nirenberg quantization, the definition of the  Wick quantization extends naturally to groups
that  satisfy some hypotheses allowing for the definition of the group Fourier transform (based on representation theory), and   the associated Plancherel theorem. 
As a straightforward counterpart, we obtain the analogue of the Bargmann transform~\cite{Lerner} in the Euclidean case 
and a natural frame on graded Lie groups, based on the wave packets constructed in~\cite{FF0,FF1,FL}. 
This frame is different from the wavelets frame defined on stratified Lie groups by~\cite{lemarie}, see also more generally \cite{Fuhr}, though close in spirit; it is the analogue of the Gaussian frame used for constructing approximation of the Schr\"odinger propagator in the semi-classical limit~\cite{swart_rousse,robert}, with applications in the numerical analysis of quantum dynamics~\cite{LS}. We think that our construction of the Wick quantization, though quite simple, opens the way to various applications.

\medskip 

Here, as an application,
we  prove  G\aa rding inequalities for two global symbolic pseudodifferential 
calculi on compact and graded nilpotent Lie groups, discussing also the semi-classical calculus in the non-compact case. 
This topic, that is, 
G\aa rding inequalities for global pseudodifferential calculi on groups, has been the subject of many papers in recent years, see e.g. \cite{FRCras,RTGarding,CDR,CFR}. 

It turns out that 
on $\bR^n$, the links between the Kohn-Nirenberg and Wick quantizations provide 
some G\aa rding inequalities; this is briefly sketched in  Appendix \ref{app} for the H\"ormander calculus on~$\bR^n$
while a reference for the semi-classical is for instance Jean-Marc Bouclet's lecture notes \cite{bouclet}, see also~\cite{Zwobook,Lerner}. Though weaker than what is usually meant by `sharp G\aa rding inequality', these inequalities are interesting by themselves for applications and are still strong G\aa rding inequalities, in the sense that there is a gain of half a derivative or of a power of the semi-classical parameter if any.

We extend this approach to the case of groups: we prove
the G\aa rding inequalities that are summarised in the three following theorems,
although their statements will use the notation for the settings and the calculi recalled later on in the paper.   
The first inequality is set on compact Lie groups and considers the global symbolic pseudodifferential calculus proposed in \cite{RT,RTW},
studied in \cite{FJFA2015} and  briefly recalled in Section \ref{subsec_pseudoC_compact}. We will prove the $(\rho,\delta)$-generalisation of the following G\aa rding inequality (see Theorem~\ref{thm_Gardingcomp_rhodelta}):

\begin{theorem}
\label{thm_Gardingcomp}
Let $G$ be a connected compact Lie group. 
Let $m\in \bR$.
	Assume that the symbol $\sigma\in S^{m}(G)$ satisfies the elliptic condition $\sigma \geq c_0 (\id  +\widehat \cL)^{\frac m 2}$ for some constant $c_0>0$.
	Then there exist  constants $c,C>0$ such that 
	$$
	\forall f\in \cC^\infty(G),\qquad 
	\Re \left (\Op^{\rm KN}(\sigma) f, f\right )_{L^2(G)} \geq c\| f\|_{H^{\frac m2}(G)}^2-C \|f\|_{H^{\frac{m-1}2}(G)}^2 .
$$	 
\end{theorem}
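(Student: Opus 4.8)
The plan is to follow the $\bR^n$ strategy recalled in the introduction, combining two features of the Wick quantization developed above: its \emph{positivity}, namely that $\Op^{\rm Wick}(\tau)$ is a non-negative operator whenever the symbol $\tau$ is fibrewise non-negative, and its \emph{comparison} with the Kohn--Nirenberg quantization, the two differing by an operator that is one order lower in the calculus. The elliptic hypothesis furnishes precisely a non-negative symbol to which the positivity applies, while the explicit model multiplier carries the leading $H^{m/2}$-norm.

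First I would peel off the model part. Put $\fm:=(\id+\widehat\cL)^{m/2}\in S^{m}(G)$; this is the symbol of the Fourier multiplier $(\id+\cL)^{m/2}$, so that $\Op^{\rm KN}(\fm)=(\id+\cL)^{m/2}$, and with the convention $\|f\|_{H^{m/2}(G)}^2=\big((\id+\cL)^{m/2}f,f\big)_{L^2(G)}$ one gets
\[
\Re\big(\Op^{\rm KN}(c_0\fm)f,f\big)_{L^2(G)}=c_0\,\|f\|_{H^{m/2}(G)}^2 .
\]
Setting $\tau:=\sigma-c_0\fm$, the ellipticity assumption says exactly that $\tau(\pi)\ge 0$ for every representation $\pi$, and $\tau\in S^{m}(G)$ since $\fm\in S^{m}(G)$. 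Hence $\sigma=c_0\fm+\tau$ with $\tau$ a fibrewise non-negative symbol of order $m$.

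I would then trade $\Op^{\rm KN}(\tau)$ for its Wick counterpart via the comparison result, writing $\Op^{\rm KN}(\tau)=\Op^{\rm Wick}(\tau)+\Op^{\rm KN}(r)$ with a remainder $r\in S^{m-1}(G)$. Because $\tau\ge 0$, Wick positivity yields $\big(\Op^{\rm Wick}(\tau)f,f\big)_{L^2(G)}\ge 0$, so this term only helps. The remainder has order $m-1$, hence is continuous from $H^{(m-1)/2}(G)$ to $H^{-(m-1)/2}(G)=(H^{(m-1)/2}(G))'$, so that
\[
\big|\big(\Op^{\rm KN}(r)f,f\big)_{L^2(G)}\big|\le C\,\|f\|_{H^{(m-1)/2}(G)}^2 .
\]
Adding the three contributions gives the inequality with $c=c_0$.

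The main obstacle is the bookkeeping in the comparison step: one must check that $\Op^{\rm KN}(\tau)-\Op^{\rm Wick}(\tau)$ genuinely gains a full order, so that the error lives in $H^{(m-1)/2}(G)$ rather than merely in $H^{m/2}(G)$ — this one-order gain is exactly the half-derivative improvement that makes the estimate a strong G\aa rding inequality. A related point is the role of $\Re$: the Kohn--Nirenberg quantization of a self-adjoint symbol is in general not self-adjoint, and it is precisely this defect that is carried by the order-$(m-1)$ remainder $\Op^{\rm KN}(r)$. Since $(\id+\cL)^{m/2}$ and $\Op^{\rm Wick}(\tau)$ are self-adjoint and non-negative, their quadratic forms are real and controlled, and only $\Re\big(\Op^{\rm KN}(r)f,f\big)_{L^2(G)}$ needs to be estimated, which is absorbed into the error term.
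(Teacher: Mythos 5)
Your overall strategy (Wick positivity plus a comparison between the Wick and Kohn--Nirenberg quantizations) is the same as the paper's, and peeling off the model term $c_0\fm$ to carry the $H^{m/2}$-norm is a legitimate variant of the paper's reduction. However, there are two genuine gaps.

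First, the comparison lemma you invoke does not hold as stated: it is \emph{not} true that $\Op^{\rm KN}(\tau)-\Op^{\rm Wick}(\tau)\in\Psi^{m-1}(G)$ for a fixed window $a$. What the paper proves (Lemma \ref{lem_OpW}, Part (2), here with $(\rho,\delta)=(1,0)$) is
$$
\Op^{\rm Wick}(\tau)-\Op^{\rm KN}\bigl(\tau*|a|^2\bigr)\in\Psi^{m-1}(G),
$$
i.e.\ the one-order gain occurs only after replacing $\tau$ by its $x$-regularisation $\tau*|a|^2$. The leftover piece $\Op^{\rm KN}(\tau-\tau*|a|^2)$ is of full order $m$: convolving in the $x$-variable gains nothing on the symbol scale, because $x$-derivatives do not lower the order in $S^m_{1,0}(G)$ (the same phenomenon is visible on $\bR^n$, cf.\ Appendix \ref{app}). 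The paper handles this term not by order considerations but by smallness: it chooses $a=\sqrt{\varphi_t}$ with $\varphi_t$ a positive, nowhere-vanishing approximation of the identity (the heat kernel), so that $\tau*\varphi_t\to\tau$ in the $S^m$-seminorms (Lemma \ref{lem_control}), hence $\|\Op^{\rm KN}(\tau-\tau*|a|^2)\|_{\sL(L^2)}\le\eta$ for $t$ small, and then absorbs the resulting error $\eta\|f\|^2$ into the elliptic margin by taking $\eta<c_0$ --- in your decomposition, into the explicit term $c_0\|f\|^2_{H^{m/2}(G)}$. Without this middle term and the absorption step, your bound $|(\Op^{\rm KN}(r)f,f)|\le C\|f\|^2_{H^{(m-1)/2}}$ rests on a lemma that is false; what you would actually obtain is an order-$m$ error with a constant that is not small, and the inequality collapses.

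Second, the Wick quantization is only defined for bounded symbols: $\Op^{\rm Wick}(\tau)=\cB^*\tau\cB$ requires $\tau\in L^\infty(G\times\Ghat)$, and Lemma \ref{lem_OpW} is stated (and can only be stated) for $m\le 0$. Your $\tau=\sigma-c_0\fm$ lies in $S^m(G)$, which for $m>0$ is not contained in $L^\infty(G\times\Ghat)$, so $\Op^{\rm Wick}(\tau)$ and its positivity are simply not available. The paper circumvents this by first reducing to order zero: it writes $(\id+\cL)^{-m/4}\,\Op^{\rm KN}(\sigma)\,(\id+\cL)^{-m/4}=\Op^{\rm KN}(\sigma_1)+\Op^{\rm KN}(\tau_1)$ with $\sigma_1(x,\pi)=(1+\lambda_\pi)^{-m/2}\sigma(x,\pi)\in S^0(G)$ and $\tau_1\in S^{-1}(G)$, applies the order-zero G\aa rding inequality to $\sigma_1$ (whose ellipticity $\sigma_1\ge c_0$ follows from your hypothesis), and transfers back through $\|(\id+\cL)^{m/4}f\|_{H^s(G)}\sim\|f\|_{H^{s+m/2}(G)}$. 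This conjugation step is missing from your argument and cannot be skipped when $m\neq 0$.
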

Above, the spaces $H^m(G)$ denote the usual Sobolev spaces defined on any compact manifold, here $G$, 
while the definitions of  the symbol classes $S^m(G)$ and  the Laplace-Beltrami operator $\cL$ as well as its Fourier transform are recalled in Section \ref{subsec_pseudoC_compact}.

\smallskip

The next result concerns  the symbolic pseudodifferential calculus on a graded Lie group $G$ \cite{FR,FF0}, briefly recalled in Section \ref{subsec_pseudoC_graded}.
The Sobolev space $L^2_s(G)$ will be the ones adapted to this setting \cite{FR,FRSob}. We will prove the $(\rho,\delta)$-generalisation of the following G\aa rding inequality (see Theorem~\ref{thm_Gardingnilp_rhodelta}):
\begin{theorem}
\label{thm_Garding_graded}
Let $G$ be a graded nilpotent Lie group. 
Let $m\in \bR$.
	Assume that  $\sigma\in S^{m}(G)$ satisfies the elliptic condition $\sigma \geq c_0 (\id  +\widehat \cR)^{\frac {m}\nu}$
	for some constant $c_0>0$ where $\cR$ is a positive Rockland operator of homogeneous degree $\nu$.
	Then there exist  constants $c,C>0$ such that 
	$$
	\forall f\in \cC^\infty_c(G),\qquad 
	\Re \left (\Op^{\rm KN}(\sigma) f, f\right )_{L^2(G)} \geq c\|f\|_{L^2_{\frac{m}2}(G)}^2 -C \|f\|_{L^2_{\frac{m-1}2}(G)}^2 .
$$	 
\end{theorem}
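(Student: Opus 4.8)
The plan is to deduce the inequality from the positivity of the Wick quantization together with its sharp comparison to the Kohn--Nirenberg quantization, following the Euclidean scheme sketched in Appendix~\ref{app}. The two structural inputs, to be established earlier for the graded calculus, are: (i) the Wick quantization is positivity preserving, i.e. a field of nonnegative operators $\sigma(\pi)\geq 0$ on $\cH_\pi$ produces a nonnegative operator $\Op^{\rm Wick}(\sigma)\geq 0$ on $L^2(G)$; and (ii) the difference $\Op^{\rm KN}(\sigma)-\Op^{\rm Wick}(\sigma)$ is an operator of order $m-1$, hence continuous from $L^2_{\frac{m-1}2}(G)$ to $L^2_{-\frac{m-1}2}(G)$. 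Note that the ellipticity hypothesis forces $\sigma$ to be a field of self-adjoint operators, so $\Op^{\rm Wick}(\sigma)$ is self-adjoint and the form $\left(\Op^{\rm Wick}(\sigma)f,f\right)_{L^2(G)}$ is real.

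First I would isolate the positive part. Writing $\sigma_0:=(\id+\widehat\cR)^{\frac m\nu}$ for the positive self-adjoint reference symbol, the hypothesis reads $\sigma-c_0\sigma_0\geq 0$. By the comparison (ii),
$$\Re\left(\Op^{\rm KN}(\sigma)f,f\right)_{L^2(G)} = \left(\Op^{\rm Wick}(\sigma)f,f\right)_{L^2(G)} + \Re\left(Rf,f\right)_{L^2(G)}, \qquad R\in\Psi^{m-1},$$
and applying (i) to $\sigma-c_0\sigma_0$ gives, by linearity,
$$\left(\Op^{\rm Wick}(\sigma)f,f\right)_{L^2(G)}\ \geq\ c_0\,\left(\Op^{\rm Wick}(\sigma_0)f,f\right)_{L^2(G)}.$$

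Next I would convert the Wick term for $\sigma_0$ into the Sobolev norm. Applying (ii) once more to $\sigma_0$ yields $\Op^{\rm Wick}(\sigma_0)=\Op^{\rm KN}(\sigma_0)+R'=(\id+\cR)^{\frac m\nu}+R'$ with $R'\in\Psi^{m-1}$, while the very definition of the Rockland-adapted Sobolev norm gives
$$\left((\id+\cR)^{\frac m\nu}f,f\right)_{L^2(G)}=\big\|(\id+\cR)^{\frac{m}{2\nu}}f\big\|_{L^2(G)}^2=\|f\|_{L^2_{\frac m2}(G)}^2$$
up to norm equivalence. Collecting the three displays, the main term is a positive multiple of $\|f\|_{L^2_{\frac m2}(G)}^2$, and the remaining contributions come from $R$ and $c_0R'$, both of order $m-1$. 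Each is bounded by $C\|f\|_{L^2_{\frac{m-1}2}(G)}^2$ through the continuity $\Psi^{m-1}\colon L^2_{\frac{m-1}2}(G)\to L^2_{-\frac{m-1}2}(G)$ and the duality pairing of $L^2_{\frac{m-1}2}(G)$ with $L^2_{-\frac{m-1}2}(G)$. This produces the claimed inequality for all $f\in\cC^\infty_c(G)$.

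The main obstacle lies in securing the two inputs in the graded setting, not in the final assembly. The positivity (i) requires realizing $\Op^{\rm Wick}(\sigma)$ as a genuine superposition of nonnegative contributions against the wave packets of \cite{FF0,FF1,FL}, uniformly over $\widehat G$ and compatibly with the Plancherel measure. The sharp-order comparison (ii), with a remainder a full order below $m$ (hence the half-derivative gain in the final estimate), is the delicate point: it rests on the symbolic expansion relating the two quantizations and on careful control of the remainder; its $(\rho,\delta)$-analogue, with remainder in $\Psi^{m-(\rho-\delta)}$, is what underlies the stated Theorem~\ref{thm_Gardingnilp_rhodelta}. Once these are in hand, the argument above is essentially algebraic.
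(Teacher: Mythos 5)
Your skeleton (Wick positivity plus a comparison of the two quantizations, then absorption of remainders by duality) is indeed the paper's strategy, but your structural input (ii) --- that $\Op^{\rm KN}(\sigma)-\Op^{\rm Wick}(\sigma)\in\Psi^{m-1}$ for the Wick quantization built from a fixed window $a$ --- is false, and this is a genuine gap rather than a deferred technicality. The obstruction is the $x$-dependence of $\sigma$. Take $\sigma(x,\pi)=\chi(x)\,\id_{\cH_\pi}$ with $\chi=1+\psi$, $\psi\in\cD(G)$ nonzero, so that $\sigma\in S^0(G)$. Then $\Op^{\rm KN}(\sigma)$ is multiplication by $\chi$, while a direct computation from $\Op^{\rm Wick}(\sigma)=\cB^*\sigma\cB$ and Proposition \ref{prop_B} (2) shows that $\Op^{\rm Wick}(\sigma)$ is multiplication by $\chi*|a|^2$; the difference is multiplication by $\chi*|a|^2-\chi$, a nonzero bounded function, hence an operator of order exactly $0$, not $-1$. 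What the paper actually proves (Lemma \ref{lem_OpW_graded}) is the weaker statement
$$
\Op^{\rm Wick}(\sigma)-\Op^{\rm KN}(\sigma*|a|^2)\in\Psi^{m-1}(G),
$$
where the convolution $\sigma*|a|^2$ is in the $x$-variable only: the piece $\Op^{\rm KN}(\sigma-\sigma*|a|^2)$, which you would need to be of lower order, retains full order $m$. (Your step for $\sigma_0=(\id+\widehat\cR)^{m/\nu}$ is, by contrast, harmless: $\sigma_0$ is $x$-independent, so $\sigma_0*|a|^2=\sigma_0$.)

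The paper's repair is quantitative rather than order-theoretic, and it is exactly where ellipticity (as opposed to mere nonnegativity) enters. One takes $a=a_t$ with $|a_t|^2$ an approximation of the identity built from dilations; then $\sigma-\sigma*|a_t|^2$ remains of order $m$ but its symbol seminorms are $O(t)$ (Corollary \ref{cor_lem_convSm_graded}), so after reducing to $m=0$ by conjugation with $(\id+\cR)^{-m/(2\nu)}$ the corresponding error term is $\eta\|f\|_{L^2(G)}^2$ with $\eta$ as small as desired, and it is absorbed into the lower bound $c_0\|f\|_{L^2(G)}^2$ furnished by Wick positivity of the order-zero elliptic symbol, upon choosing $\eta<c_0$. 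A sanity check confirming the gap: if your input (ii) held, your assembly would prove the sharp G\aa rding inequality (the same conclusion assuming only $\sigma\geq0$), which the paper explicitly states is out of reach of the Wick quantization alone (see the discussion after Theorem \ref{thm:garding_sc} and Remark \ref{rem:pluto}).
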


Still in the context of graded Lie groups, 
our method is particularly adapted to  the semi-classical counter-part of Theorem \ref{thm_Garding_graded}:
 \begin{theorem}\label{thm:garding_sc}
 Let $\sigma\in \cA_0$, that is, the symbol $\sigma$ is a smoothing symbol with $x$-compact support. 
If $\sigma$ is non-negative, then  there exists a constant $C>0$ such that 
 \begin{equation}\label{eq:minnie}
 \forall f\in L^2(G), \quad \forall \eps\in (0,1],\qquad 
 \Re \left(\Op_\eps(\sigma) f, f\right) _{L^2(G)}\geq  -C\eps \| f\|^2_{L^2(G)}.
 \end{equation}
 \end{theorem}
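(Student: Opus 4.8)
The plan is to derive the inequality from the positivity of the Wick quantization together with the link between the Wick and Kohn--Nirenberg quantizations on $G$. First I would reduce the statement to two facts: that the semiclassical Wick quantization of a non-negative symbol is a non-negative operator, and that on the class $\cA_0$ the two quantizations differ by an operator whose $L^2$-operator norm is $O(\eps)$ uniformly in $\eps\in(0,1]$. Granting these, for $f\in L^2(G)$ one writes
$$
\Re\left(\Op_\eps(\sigma)f,f\right)_{L^2(G)}
=\Re\left(\Op_\eps^{\rm Wick}(\sigma)f,f\right)_{L^2(G)}
+\Re\left(\bigl(\Op_\eps(\sigma)-\Op_\eps^{\rm Wick}(\sigma)\bigr)f,f\right)_{L^2(G)},
$$
where the first term is non-negative and the second is bounded below by $-\|\Op_\eps(\sigma)-\Op_\eps^{\rm Wick}(\sigma)\|_{L^2(G)\to L^2(G)}\,\|f\|_{L^2(G)}^2\geq -C\eps\|f\|_{L^2(G)}^2$, which is exactly \eqref{eq:minnie}.

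For the positivity I would use the very construction of the Wick quantization from the wave packets of \cite{FF0,FF1,FL}: after the semiclassical rescaling, $\Op_\eps^{\rm Wick}(\sigma)$ is a superposition of the rank-one orthogonal projections onto the normalized wave packets, integrated against $\sigma$ over the associated phase space with respect to the natural Plancherel measure, schematically $\Op_\eps^{\rm Wick}(\sigma)=\int \sigma\,\Pi\,d\mu_\eps$ with each $\Pi$ a non-negative rank-one operator. Pairing with $f$ and using that $\sigma$ is non-negative (in the operator-valued sense over $\widehat G$) makes every integrand $\left(\sigma\,\Pi f,f\right)_{L^2(G)}$ non-negative, hence $\left(\Op_\eps^{\rm Wick}(\sigma)f,f\right)_{L^2(G)}\geq 0$. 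This step is exact: it uses only non-negativity of $\sigma$ and the resolution of identity provided by the frame, not any smoothness.

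The remaining ingredient is the comparison estimate, and this is where I expect the main difficulty. Using the Wick--Kohn--Nirenberg link established earlier in the paper, the difference $\Op_\eps(\sigma)-\Op_\eps^{\rm Wick}(\sigma)$ is, up to the semiclassical rescaling, the Kohn--Nirenberg quantization of $\sigma$ minus its average against the wave-packet profile; because the wave packets are centered, the contribution that would be of order $\sqrt\eps$ cancels by symmetry, leaving a leading term of order $\eps$ expressed through the second-order derivatives of $\sigma$. The hard part will be to turn this heuristic into a uniform bound: one must show, for $\sigma\in\cA_0$ smoothing with $x$-compact support, that the resulting operator is bounded on $L^2(G)$ with norm $\leq C\eps$ for all $\eps\in(0,1]$. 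This requires the precise form of the link formula on $G$ and the $L^2$-boundedness theorem for the calculus restricted to $\cA_0$, and in particular the cancellation of the first-order term must be verified carefully in order to obtain the full gain of one power of $\eps$ rather than only $\sqrt\eps$. With this estimate in hand, the combination above yields the claimed inequality.
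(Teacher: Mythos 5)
Your overall architecture is exactly that of the paper: positivity of the semiclassical Wick quantization — which, as you say, is immediate from $\left(\Op^{\rm Wick}_\eps(\sigma)f,f\right)_{L^2(G)}=\left(\sigma\,\cB^\eps f,\cB^\eps f\right)_{L^2(G\times\Ghat)}\geq 0$, your frame/rank-one picture being just a rewriting of this — plus a uniform bound $\|\Op_\eps(\sigma)-\Op^{\rm Wick}_\eps(\sigma)\|_{\sL(L^2(G))}=O(\eps)$, combined by the same two-line estimate. The reduction and the positivity step are correct.

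The gap is in the comparison estimate, and it is not merely a matter of ``turning the heuristic into a uniform bound'': the cancellation of the $\sqrt\eps$ term is \emph{not} automatic ``by symmetry'' from the wave packets being centered; it only holds for a suitably engineered window $a$. In the paper (Corollary \ref{cor_OpepsWKN}) one chooses $a\in\cD(G)$ \emph{even}, $a(x^{-1})=a(x)$, and \emph{real-valued}, and the first-order Taylor terms in the expansion of the Wick kernel split into two distinct pieces: one proportional to the first moments $\int_G z_j\,|a(z)|^2\,dz$, killed by evenness (this is your ``centering''); and a second one — coming from the modulation factor $a(z\delta_{\sqrt\eps}w^{-1})$ and having nothing to do with centering — proportional to $\int_G \bar a(z)\,X_j a(z)\,dz$, which vanishes because $a$ is real, since then $\int_G \bar a\,X_j a=\tfrac12\int_G X_j(a^2)=0$ by integration by parts. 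For a generic normalized window (for instance a complex-valued one) the second integral need not vanish, the difference of quantizations is genuinely only $O(\sqrt\eps)$, and the stated gain of a full power of $\eps$ fails; your sketch captures only the first cancellation and presents it as automatic, so as written the argument breaks at this point. A smaller correction: the uniform $L^2$ bound does not require any $L^2$-boundedness theorem for a calculus on $\cA_0$; it follows from the elementary Young/Schur-type kernel bound $\|\Op^{\rm KN}(\tau)\|_{\sL(L^2(G))}\leq\int_G\sup_{x}|\kappa_x(y)|\,dy$ (Lemma \ref{lem_A0norm}) applied to $\sigma-\sigma^{\eps,{\rm Wick}}$, whose kernel is explicit from the link formula, the Taylor expansion with the above cancellations being performed directly on that kernel.
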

 
 This inequality is exactly what is used in the Euclidean setting for proving the positivity of semi-classical measures (see~\cite{gerard_leichtnam}). However, in $\R^n$, one can prove a stronger result, known as {\it sharp G\aa rding inequality}, in which the right-hand side in~\eqref{eq:minnie}  is $-C\eps \| f\|^2_{H^{-1/2}}$, under the assumptions of Theorem~\ref{thm:garding_sc} (see~\cite{Zwobook}). The proof of such an estimate requires to use other tools than the sole Wick quantization.
The semi-classical calculus in this setting \cite{FF1,FF2}
is recalled in Section \ref{subsec_semiclC}, 
as well as the definitions for $\cA_0$ and $\Op_\eps$.

\medskip

On a compact Lie group $G$, the pseudodifferential calculus  mentioned above with $\rho>\delta$ and $\rho\geq 1-\delta$ coincides with H\"ormander's pseudodifferential calculus defined via charts on the compact Lie group $G$ viewed as a compact manifold \cite{FJFA2015,RTW}.
However, the notion of symbols are not the same in these two calculi: the one presented here or in \cite{FJFA2015,RT,RTW}
is  global and  based on the representations of the group.
For a graded nilpotent Lie group $G$, 
the pseudodifferential calculus mentioned above coincides 
with the (global) H\"ormander calculus only when 
$G$ is abelian, that is, only when $G$ is the abelian group $(\bR^n,+)$ with $n=\dim G$.
Otherwise, although a graded nilpotent Lie group is globally diffeomorphic to $\bR^n$ as a manifold, 
the calculi will not be comparable.
At this point, we ought to  clarify what we mean by pseudodifferential calculus on a  smooth manifold $M$ 
in this paper:

\begin{definition}
\label{def_pseudo-diff_calculus}
For each $m\in \bR$, 
let $\Psi^m(M)$  be a given Fr\'echet space of continuous operators $\cD(M)\to\cD(M)$.
We say that the space $\Psi^\infty(M):=\cup_m \Psi^m(M)$ form a \emph{pseudodifferential calculus} 
when it is an algebra of operators satisfying: 
\begin{enumerate}
\item 
\label{item_def_pseudo-diff_calculus_inclusion}
The continuous inclusions
$\Psi^m (M)\subset \Psi^{m'}(M)$ hold for any $m\leq m'$.
\item
\label{item_def_pseudo-diff_calculus_product}
$\Psi^\infty(M)$ is an algebra of operators.
Furthermore if $T_1\in \Psi^{m_1}(M)$, $T_2\in \Psi^{m_2}(M)$, 
then $T_1T_2\in \Psi^{m_1+m_2}(M)$,
and the composition is continuous as a map
$\Psi^{m_1}(M)\times \Psi^{m_2}(M)\to \Psi^{m_1+m_2}(M)$.
\item
\label{item_def_pseudo-diff_calculus_adjoint}
$\Psi^\infty(M)$ is stable under taking the adjoint.
Furthermore if $T\in \Psi^{m}(M)$  then $T^*\in \Psi^{m}(M)$,
and taking the adjoint is continuous as a map
$\Psi^{m}(M)\to \Psi^{m}(M)$.
 \end{enumerate}
 \end{definition}

The paper is organised as follows. We start with recalling the definition of the Kohn-Nirenberg quantization on groups and introducing Wick's   (Section \ref{sec_quantization}).
Then we show that a link between these two quantizations in the symbolic calculi provides a proof of 
G\aa rding inequalities in the cases of compact Lie groups $G$ 
(Section \ref{sec_Gcomp}), and of graded nilpotent Lie groups $G$ (Section \ref{sec_Gnilp}).
In the latter case, we also study the semi-classical analogue in Section \ref{sec_SC_nilpG} and discuss in that setting the consequences of the Wick-calculus in terms of frame. 
In the Appendix, we develop the same strategy of proof in the Euclidean case; to our knowledge, the  proof which is the closest to ours is Folland's one~\cite[Chapter 2, Section 6]{folland}.

\medskip

\noindent{\bf Acknowledgements}. The authors warmly thank 
Serena Federico for interesting and useful discussions on G\r arding inequalities.
  CFK thanks the Erwin Schr\"odinger Institut for its hospitality when writing this paper. LB and CFK benefit from the support of  the Région Pays de la Loire via the Connect Talent Project HiFrAn 2022 07750, and  from the France 2030 program, Centre Henri Lebesgue ANR-11-LABX-0020-01. CFK and VF acknowledge the support of  The Leverhulme Trust via Research Project Grant RPG 2020-037. 
  \medskip 

  \noindent{\bf Notation}. We use the notation $f\lesssim g$ when there exists a constant $C>0$ such that $f\leq Cg$. Moreover, when $f\lesssim g$ and $g\lesssim f$, we will write $f\sim g$. We will write $f\lesssim_G g$ when the constant involved in the estimate depends on $G$.

\section{Quantizations on groups}
\label{sec_quantization}

In this section, we discuss two quantizations procedures on groups that are based on the group Fourier transform  and the associated Plancherel theorem. These latter notions require some hypotheses on the group we now list. The group  $G$ is a separable  locally compact group.
We assume that it is unimodular, that is, its left (resp. right) Haar measures are also right (resp. left) invariant. 
We also assume that it is of type I.
The paper may be read without understanding these technical hypotheses. It suffices to know that  they ensure that the Plancherel theorem  holds, and that 
 they  are naturally satisfied on Lie groups that are compact or nilpotent. 
 
\subsection{Fourier analysis}

\subsubsection{The dual set}
Recall that a (unitary) \emph{representation} $(\mathcal H_\pi, \, \pi)$ of $G$ is a pair consisting in a Hilbert space~$\mathcal H_\pi$  and a group morphism~$\pi$ from~$G$ to the set of unitary operators on $\mathcal H_\pi$.
In this paper, the representations will always be assumed (unitary) strongly continuous, and their associated Hilbert spaces separable. 
A representation is said to be {\it irreducible} if the only closed subspaces of $\mathcal H_\pi$ that are stable under~$\pi$ are $\{0\}$ and $\mathcal H_\pi$ itself. 
Two representations $\pi_1$ and $\pi_2$ are equivalent  if there exists a unitary transform $\mathbb U$ called an {\it intertwining map} that sends $\mathcal H_{\pi_1}$ on $\mathcal H_{\pi_2}$ with 
$$\pi_1=\mathbb U^{-1}\circ  \pi_2 \circ \mathbb U.$$ 
The {\it dual set} $\widehat G$ is obtained by taking the quotient of the set of irreducible representations by this equivalence relation.  We may still denote by $\pi$ the elements of $\widehat G$ and we keep in mind that different representations of the class are equivalent through intertwining operators. 

\subsubsection{Fixing a Haar measure}
We fix a Haar measure that we denote by $dx$ when the variable of integration is $x\in G$ or by $dy$ if the variable is $y$.  

The non-commutative convolution is given via
\begin{equation}\label{def:convolution}
 (f_1*f_2)(x):=\int_G f_1(y) f_2(y^{-1}x) dy,\quad x\in G
\end{equation}
for $f_1,f_2\in {\mathcal C}_c(G)$; here 
${\mathcal C}_c (G)$ denotes the space of continuous complex-valued functions on $G$ with compact support.

\subsubsection{The Fourier transform}
\label{subsubsecFG}
The {\it Fourier transform} of an integrable function $f\in L^1(G)$ at a representation $\pi$ of $G$ is the operator acting on $\mathcal H_\pi$
 via 
 $$
 \widehat f(\pi):=
 \cF(f)(\pi) :=\int_G  f(z)\, (\pi(z))^*\, dz.
 $$
 Note that if $f_1,f_2\in \cC_c (G)$ then 
 \begin{equation}
 \label{eq_cFf1*f2}
 \widehat {f_1 * f_2} = \widehat f_2 \widehat f_1.
 \end{equation}

 If $\pi_1,\pi_2$ are two equivalent representations of $G$ with $\pi_1=\mathbb U^{-1}\circ  \pi_2 \circ \mathbb U$ for some intertwining operator $\mathbb U$, then 
$$\mathcal F(f)(\pi_1) =\mathbb U^{-1}\circ \mathcal F(f)(\pi_2)\circ \mathbb U .
$$
Hence, this defines the measurable field of operators $\{{\mathcal F}(f)(\pi), \pi\in \Ghat \}$ modulo equivalence. 
The unitary dual $\Ghat$ is equipped with its natural Borel structure, 
and the equivalence comes from quotienting the set of irreducible representations of $G$ together with understanding the resulting fields of operators modulo intertwiners.

\subsubsection{The Plancherel Theorem} 

Here, we recall the Plancherel Theorem due to Dixmier \cite[Ch. 18]{dixmier}. Among other results, 
it states the existence and uniqueness of the {\it Plancherel measure}, that is, the positive Borel measure $\mu$ 
on $\widehat G$ such that 
 the Plancherel formula
\begin{equation}
  \label{eq_plancherel_formula}
\|f\|^2_{L^2(G)}=\int_G |f(x)|^2 dx = \int_{\widehat G} \|\widehat f(\pi)\|_{HS(\mathcal H_\pi)}^2 \,d\mu(\pi),
\end{equation}
 or equivalently
 $$
 (f_1,f_2)_{L^2(G)}=\int_G f_1(x)\overline{f_2 (x)}dxdx = \int_{\widehat G} {\rm tr}_{\mathcal H_\pi} \left(\widehat f_1(\pi)\widehat f_2(\pi)^*)\right) \,d\mu(\pi)
 $$
holds for any $f\in {\mathcal C}_c(G) $.
Here $\|\cdot\|_{HS(\mathcal H_\pi)}$ denotes the Hilbert-Schmidt norm on $\mathcal H_\pi$.  
This implies that the group Fourier transform 
is a unitary map from $L^1(G)\cap L^2(G)$ equipped with the norm of $L^2(G)$ to the Hilbert space
$$
L^2(\widehat G):=\int_{\widehat G} \mathcal H_\pi \otimes\mathcal H_\pi^*\, d\mu(\pi).
$$
We identify $L^2(\Ghat)$ with the space of $\mu$-square integrable Hilbert-Schmidt fields on $\widehat G$;  its Hilbert norm and scalar products are then given by 
\begin{align*}
	\|\tau\|_{L^2(\Ghat)}^2 &= \int_{\Ghat} \|\tau (\pi)\|_{HS(\mathcal H_\pi)}^2 \,d\mu(\pi), \quad \tau\in L^2(\Ghat), \\
(\tau_1,\tau_2)_{L^2(\Ghat)} 
&= \int_{\Ghat} \tr_{\cH_\pi} (\tau_1 (\pi)\ \tau_2(\pi)^*) \,d\mu(\pi), \quad \tau_1,\tau_2\in L^2(\Ghat).
\end{align*}
Here $\tr_{\cH_\pi}$ denotes the trace of operators on the Hilbert space $\cH_\pi$.
The group Fourier transform $\cF$ 
extends unitarily from 
 $L^2(G)$ onto $L^2(\widehat G)$.

We denote by $L^\infty(\Ghat)$ the space of measurable fields (modulo equivalence) of bounded operators
$\sigma = \{\sigma(\pi)\in \sL(\cH_\pi) 
: \pi\in \Ghat\}$ on $\Ghat$ such that 
$$
\|\sigma\|_{L^\infty(\Ghat)} := \sup_{\pi\in \Ghat} \|\sigma(\pi)\|_{\sL(\cH_\pi) }
$$
is finite; here the supremum refers to the essential supremum with respect to the Plancherel measure~$\mu$ of $\Ghat$. 
In fact, $L^\infty(\Ghat)$ is naturally a Banach space and moreover a von Neumann algebra, sometimes called the von Neumann algebra of the group $G$. 
It acts naturally on $L^2(\Ghat)$ by composition on the left: 
$$
(\sigma \tau) (\pi) = \sigma(\pi) \ \tau(\pi),\quad \pi\in \Ghat, \ \sigma\in L^\infty(\Ghat) \ \mbox{and}\ \tau \in L^2(\Ghat),
$$
(it also acts on the right)
and this action is continuous
$$
\|\sigma\tau\|_{L^2(\Ghat)}\leq 
\|\sigma\|_{L^\infty(\Ghat)}
\|\tau\|_{L^2(\Ghat)}.
$$
Dixmier's Plancherel theorem implies that 
$L^\infty(\Ghat) $ is isomorphic to the von Neumann algebra $\sL(L^2(G))^G$ of linear bounded operators on $G$ that are invariant under left translations. 
The isomorphism is given by the fact that 
the Fourier multiplier with symbol $\sigma$, i.e. the operator $f\mapsto \cF^{-1} (\sigma \widehat f)$, is an operator  in  $\sL(L^2(G))^G$.
\smallskip 

Note that 
 $\cF L^1(G)\subseteq L^\infty(\Ghat)$ with 
$$
\forall f\in L^1(G),\qquad 
\|\widehat f\|_{L^\infty(\Ghat)} \leq \|f\|_{L^1(G)}.
$$

\subsection{The  Kohn-Nirenberg quantization}

In this section, we recall some results related to  the symbolic quantization on groups introduced by Michael Taylor \cite{Taylor}.
When $G$ is the abelian group~$\bR^n$, this is the quantization often used in the field of Partial Differential Equations and called the Kohn-Nirenberg quantization or classical quantization~\cite{ho,AG}.
 We keep this vocabulary in the group case. 

\subsubsection{The space $L^2(G\times \Ghat)$}
We may identify the tensor product $L^2(G)\otimes L^2(\Ghat)$ with the space denoted by $L^2(G\times \Ghat)$ of measurable fields $\tau=\{\tau(x,\pi) \in HS(\cH_\pi) \ : \ (x,\pi)\in G\times \Ghat\}$ of Hilbert-Schmidt operators (up to equivalence) such that the quantities 
$$
\|\tau\|_{L^2(G\times \Ghat)}^2
:=
\int_{G\times\Ghat} \|\tau(x,\pi)\|_{HS(\cH_\pi)}^2 dxd\mu(\pi)
$$
are finite. It is naturally a separable Hilbert space with norm $\|\cdot\|_{L^2(G\times \Ghat)}$ and scalar product given by
$$
(\tau_1,\tau_2)_{L^2(G\times \Ghat)}
=
\int_{G\times\Ghat} \tr_{\cH_\pi} \left( \tau_1(x,\pi) \ \tau_2(x,\pi)^*\right)  dxd\mu(\pi),
\quad \tau_1,\tau_2\in L^2(G\times \Ghat).
$$

By the Plancherel theorem, the Hilbert space $L^2(G\times \Ghat)$ and $L^2(G\times G)$ are isomorphic via the Fourier transform in the second variable:
$$
L^2(G\times G)\longrightarrow
L^2(G\times \Ghat) ,\qquad 
\kappa 
 \longmapsto (\id\otimes \cF) \kappa.
$$
In other words, 
any  $\tau\in L^2(G\times \Ghat)$ may be written as 
$$
\tau (x,\pi) =\widehat \kappa_{\tau,x}(\pi) 
$$ for a unique function $\kappa_\tau:(x,y)\mapsto \kappa_{\tau,x}(y) = \kappa_{\tau}(x,y)$   in $L^2(G\times G)$. 

\subsubsection{The quantization $\Op^{\rm KN}$ on $L^2(G\times \Ghat)$}
\label{subsubsec_OPKN}
For any $f\in \cC_c(G)$ and $\tau\in L^2(G\times \Ghat)$, the symbol
$$
\tau \widehat f := \{\tau(x,\pi) \pi(f) \ : \ (x,\pi)\in G\times \Ghat\}
$$
is measurable on $G\times \Ghat$ and  
satisfies
$$
\|\tau \widehat f\|_{L^2(G\times \Ghat)} \leq \|\tau \|_{L^2(G\times \Ghat)} \|\widehat f\|_{L^\infty(\Ghat)}.
$$
Hence $\tau \widehat f\in L^2(G\times \Ghat)$ and 
 we can define
$(\id\otimes \cF^{-1}) (\tau \widehat f) \in L^2(G\times G)$. 
By \eqref{eq_cFf1*f2}, 
we have:
$$
(\id\otimes \cF^{-1}) (\tau \widehat f)(x,z)
= f*\kappa_{\tau,x}(z)=
 \int_G f(y) \kappa_{\tau,x}(y^{-1}z)dy,\qquad (x,z)\in G\times G
.
$$
As $f\in \cC_c(G)$, $(\id\otimes \cF^{-1}) (\tau \widehat f)(x,z)$ is in fact continuous in $z$ and it makes sense to define:
\begin{equation}
\label{eq_OpKN}
\Op^{\rm KN}(\tau) f (x):= (\id\otimes \cF^{-1}) (\tau \widehat f) (x,x)=
 f*\kappa_{\tau,x}(x),\qquad f\in \cC_c(G), \, x\in G. 
\end{equation}

It follows from the formula above that the integral kernel of $\Op^{\rm KN}(\tau)$ is given by 
$$
G\times G\ni (x,y)\longmapsto \kappa_{\tau,x}(y^{-1}x).
$$
Hence,  the operator $\Op^{\rm KN}(\tau)$ extends uniquely into a Hilbert-Schmidt operator on $L^2(G)$ with norm
$$
\|\Op^{\rm KN}(\tau)\|_{HS(L^2(G))}
=
\|\kappa_\tau\|_{L^2(G\times G)}
=
\|\tau\|_{L^2(G\times\Ghat)}
$$
Consequently, $\Op^{\rm KN}$ is an isometry from $L^2(G\times \Ghat)$ onto $HS(L^2(G))$.

\subsubsection{Extension of $\Op^{\rm KN}$ to $\cC(G, \cF L^1(G))$}

We can extend naturally $\Op^{\rm KN}$ via \eqref{eq_OpKN} to the space $\cC(G, \cF L^1(G))$, that is, to the symbols 
$\sigma$ of the form $\sigma(x,\pi)=\widehat \kappa_x(\pi)$ with convolution kernel $\kappa\in \cC(G,L^1(G))$.
By injectivity of the Fourier transform, 
the two possible definitions of $\Op^{\rm KN}$ on symbols in $L^2(G\times \Ghat)$ and  $\cC(G, \cF L^1(G))$ coincide. 
Note that $\Op^{\rm KN}(\sigma)$ with $\sigma\in \cC(G, \cF L^1(G))$ will act on $\cC_c(G)$ and the Young convolution inequality implies  the following estimate for the operator norm as operators on  $L^2(G)$. 
\begin{lemma}
\label{lem_A0norm}
	If $\sigma\in \cC(G,\cF L^1(G))$ then 
	$$
	\|\Op^{\rm KN} (\sigma)\|_{\sL(L^2(G))} \leq 
	\int_G \sup_{x\in G} |\kappa_x(y)| dy 
	$$
\end{lemma}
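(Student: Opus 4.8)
The plan is to reduce the estimate to Young's convolution inequality after replacing the base-point–dependent kernel by an $x$-independent majorant. Writing $\kappa_x := \kappa_{\sigma,x}$, formula \eqref{eq_OpKN} together with the definition of the non-commutative convolution \eqref{def:convolution} gives, for $f\in \cC_c(G)$ and $x\in G$,
$$
\Op^{\rm KN}(\sigma) f(x) = (f*\kappa_x)(x) = \int_G f(y)\,\kappa_x(y^{-1}x)\,dy .
$$
The sole obstruction to reading this as a genuine convolution is that the kernel $\kappa_x$ depends on the evaluation point $x$; I would remove this dependence by passing to the pointwise majorant.

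Accordingly, I would introduce the function $\Phi(y):=\sup_{x\in G}|\kappa_x(y)|$, so that $\|\Phi\|_{L^1(G)}=\int_G \sup_{x\in G}|\kappa_x(y)|\,dy$ is exactly the right-hand side of the claim (and is finite, for otherwise there is nothing to prove). Its measurability is a routine consequence of the separability of $G$ together with the continuity of $x\mapsto\kappa_x$, which let one restrict the supremum to a countable dense subset of $G$. Dominating the integrand pointwise by $|f(y)|\,\Phi(y^{-1}x)$ yields
$$
|\Op^{\rm KN}(\sigma)f(x)| \le \int_G |f(y)|\,\Phi(y^{-1}x)\,dy = \bigl(|f|*\Phi\bigr)(x),\qquad x\in G .
$$

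It then remains to invoke Young's convolution inequality. Since $G$ is unimodular, the inequality $\|g*h\|_{L^r}\le \|g\|_{L^p}\|h\|_{L^q}$ holds in the standard form for $1/p+1/q=1+1/r$; taking $p=r=2$ and $q=1$ gives
$$
\|\Op^{\rm KN}(\sigma)f\|_{L^2(G)} \le \bigl\||f|*\Phi\bigr\|_{L^2(G)} \le \|\,|f|\,\|_{L^2(G)}\,\|\Phi\|_{L^1(G)} = \|f\|_{L^2(G)}\,\|\Phi\|_{L^1(G)} .
$$
Since $\cC_c(G)$ is dense in $L^2(G)$, this bound on $\cC_c(G)$ extends to the operator norm on $L^2(G)$, which is the assertion. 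The only genuinely substantive point is the replacement of $\kappa_x$ by the majorant $\Phi$: once this is done the operator is controlled by an honest convolution and the remainder is the textbook Young estimate, so I expect no real difficulty beyond checking the measurability of $\Phi$ and the unimodularity needed for Young's inequality in this form.
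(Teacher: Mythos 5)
Your proof is correct and follows essentially the same route as the paper: dominate $|\Op^{\rm KN}(\sigma)f(x)|=|f*\kappa_x(x)|$ pointwise by the convolution $|f|*\Phi$ with the $x$-independent majorant $\Phi(y)=\sup_{x\in G}|\kappa_x(y)|$, then apply Young's inequality $\|g*h\|_{L^2}\le\|g\|_{L^2}\|h\|_{L^1}$ (valid here by unimodularity) and conclude by density of $\cC_c(G)$ in $L^2(G)$. The only difference is that you make explicit the measurability of $\Phi$ and the role of unimodularity, which the paper leaves implicit.
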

\begin{proof}
Let $\kappa\in \cC(G,L^1(G))$ and  $f\in \cC_c(G)$. 
We have
$$
	|f*\kappa_x(x)|\leq |f|*\sup_{x'\in G}|\kappa_{x'}| (x),
	$$	
so the Young convolution inequality yields
	$$
	\sqrt{\int_G |f*\kappa_x(x)|^2 dx} \leq \left\| |f|*\sup_{x'\in G}|\kappa_{x'}|\right\|_{L^2(G)}
	\leq \|f\|_{L^2(G)} \left\|\sup_{x'\in G}|\kappa_{x'}|\right\|_{L^1(G)}.
	$$	
\end{proof}

If $\sigma\in \cC(G,\cF L^1(G))$, we define
\begin{equation}
	\label{eq_A0norm}
\|\sigma\|_{\cA_0}
:=\int_G \sup_{x\in G} |\kappa_x(y)| dy, 
\qquad \sigma(x,\pi)=\cF\kappa_x(\pi).
\end{equation}
We denote by $\cC_b(G,\cF L^1(G))$ 
the subspace of those
$\sigma\in\cC(G,\cF L^1(G))$  such that $\|\sigma\|_{\cA_0}
$ is finite. 
We also denote by $\cC_b(G,L^1(G))$ the space of $\kappa\in \cC(G,L^1(G))$ such that $\int_G \sup_{x\in G} |\kappa_x(y)| dy$ is finite.

\subsubsection{Extension of $\Op^{\rm KN}$ via the inversion formula}

We can  extend $\Op^{\rm KN}$ to a larger space of symbols than $\cC(G,\cF L^1(G))$ but acting on a smaller space of functions than $\cC_c(G)$ under some further technical assumptions. 
Indeed, let us consider
the space $\cC(G,L^\infty(\Ghat))$ of symbols  $\sigma$ that are continuous maps from $G$ to $L^\infty(\Ghat)$. 
A symbol $\sigma$ in $\cC(G,L^\infty(\Ghat))$  is naturally identified  with a measurable field (up to equivalence) of operators $\sigma=\{\sigma(x,\pi) \in \sL(\cH_\pi) \, : \, (x,\pi)\in G\times \Ghat\}$ satisfying conditions of continuity in $x$ and boundedness in $\pi$. 
We also consider, when it exists, a  space $S$ of bounded, continuous and integrable functions satisfying:
\begin{itemize}
	\item[(i)] the space $S\cap \cC_c(G)$ is dense in $L^2(G)$,  
	\item[(ii)] for any $f\in S$, the operators $\widehat f(\pi)$, $\pi\in \Ghat$,
are trace-class and 
the following quantity is finite:
$$
\int_{\Ghat} \tr_{\cH_\pi} |\widehat f(\pi)|d\mu(\pi)<\infty.
$$
\end{itemize} 	
As a consequence of the Plancherel formula, 
the following inversion formula holds:
$$
 f(x)
=  \int_{\widehat G} \tr_{\cH_\pi} \, \Big(\pi(x) \widehat  f(\pi)  \Big)\, d\mu(\pi) , \quad f\in S, \ x\in G,
$$
provided that $G$ is amenable. 
We will not discuss here 
these technical assumptions (existence of $S$ and amenability of $G$), but just comment on the fact that they are naturally satisfied for compact or nilpotent Lie groups with $S$ being the space of smooth functions with compact support; in the nilpotent case, we can take $S$ to be the space of Schwartz functions. 
With the inversion formula, $\Op^{\rm KN}$ extends to the quantization given for symbols $\sigma$ in $\cC(G,L^\infty(\Ghat))$ by:
$$
\Op^{\rm KN}(\sigma)f(x) = \int_{\Ghat} \tr_{\cH_\pi} \, \Big(\pi(x)\sigma(x,\pi) \widehat f(\pi)  \Big)\, d\mu(\pi) , \quad f\in S, \ x\in G.
$$
Naturally, this coincides with the quantization defined above for $\sigma\in L^2(G\times \Ghat)$ and for $\sigma \in \cC(G,\cF L^1(G))$.

At least formally, the integral kernel of 
$\Op^{\rm KN}$ is made explicit  when writing 
\begin{equation}\label{eq:calamity}
\Op^{\rm KN}(\sigma) f (x)= \int_{G\times \widehat G} \tr_{\cH_\pi} \left( \sigma(x,\pi) \pi(y^{-1}x) \right) f(y) dy d\mu(\pi).
\end{equation}

\subsection{The Wick quantization}\label{sec:wick}

Another natural symbolic quantization appears on the (locally compact, unimodular, type I) group $G$, in the same flavour as Wick's quantization (see~\cite{Lerner}).
For this, we start by defining the transformation $\cB=\cB_a$ associated with a continuous, square-integrable and bounded function $a$ satisfying $\| a\|_{L^2(G)}=1$.

On $\R^n$, the natural choice for  such a function $a$ is a Gaussian,
or a family of Gaussian re-scaled with a small parameter (see~\cite{corobook}). In the examples we treat in the next sections, 
a similar choice  will be to  consider a family of  functions $a=a_t$ which are the heat kernels at time $t$ when the group is equipped with a Laplace-like operator, as in Section~\ref{sec_Gcomp}, or the $t$-rescaling of a given function when the group is equipped with dilations (see Sections~\ref{sec_Gnilp} and~\ref{sec_SC_nilpG} below).

\subsubsection{The transformation $\cB$}\label{sec:B}
\label{subsubsec_B}
First, for each $(x,\pi)\in G\times \Ghat$, we define the  operator on $\cH_\pi$ depending on $y\in G$,
$$
\sF_{x,\pi}(y)=  a(x^{-1}y)\pi(y)^*.
$$
We check readily that $\sF_{x,\cdot} \in \cC(G,L^\infty(\Ghat ))$ with
$$
\sup_{y\in G} \| \sF_{x,\cdot} (y)\|_{L^\infty(\Ghat)} \leq  \| a\|_{L^\infty(G)}.
$$
We can now define the operator $\cB=\cB_a$ on $\cC_c(G)$ via
$$
\cB[f](x,\pi)=  \int_G f(y) \, \sF_{x,\pi}(y) dy,\quad f\in \cC_c(G), \ (x,\pi)\in G\times \Ghat.
$$
We observe that $\cB[f]$ is the field of operators  on $G\times \Ghat$ given by 
\begin{equation}\label{eq:B}
\cB[f] (x,\pi)= \cF\left( f \, a(x^{-1}\,\cdot)\right)(\pi), \qquad  (x,\pi)\in G\times \Ghat. 
\end{equation}

\begin{remark}\label{rem:luke}
    In the case of $G=\bR^n$, we have $\Ghat=\bR^n$ and $\mathcal H_\pi=\C$. Therefore, for all $a\in L^2(\bR^n)$, the function $ \mathcal F_{x,\pi} (y)$ is scalar-valued. It coincides with the wave packets  defined in~\cite{corobook}. Moreover, if  $a$ is chosen as a Gaussian function, we recognize  $\cB$ as the Bargmann transform~\cite{folland,Lerner,corobook}.
    This explains the notation. 
\end{remark}

The map $\mathcal B$ has  frame's properties:

\begin{proposition}
\label{prop_B}
\begin{enumerate}
	\item For any $f\in \cC_c(G)$, $\cB[f]$ defines an element of $L^2(G\times \Ghat)$ with norm 
	$$
	\|\cB[f]\|_{L^2(G\times \Ghat)} = \|f\|_{L^2(G)}
	$$
 and the map $\cB$ extends uniquely to an isometry from $L^2(G)$ to
$L^2(G\times \widehat G)$ for which we keep the same notation. 
	\item The adjoint map
$\cB^* : L^2(G\times \widehat G)\to L^2(G)$ is given by 
$$
\cB^*[\tau](y)= \int_{G\times \Ghat}  \tr_{\cH_\pi} \left( \tau(x,\pi) (\sF_{x,\pi}(y))^* \right) dxd\mu(\pi), 
\qquad \tau\in L^2(G\times \Ghat), \ y\in G,
$$
in the sense that for any $f\in L^2(G)$, 
$$
(\cB^*[\tau],f)_{L^2(G)}
=\int_{G\times \Ghat}  \tr_{\cH_\pi} \left( \tau(x,\pi) \left(\cF\left ( fa(x^{-1}\,\cdot)\right )(\pi)\right )^* \right) dxd\mu(\pi).
$$
If $\tau = (\id\otimes\cF) \kappa$, $\kappa\in L^2(G\times G)$, then 
\begin{equation}\label{eq:2.75}
\cB^* [\tau](y) = \int_G \kappa_x(y) \bar  a(x^{-1}y) dx
=  (\kappa_{\, \cdot} (y)*\bar a) (y).    
\end{equation}
\item We have $\cB^*\cB=\id_{L^2(G)}$ while $\cB \cB^*$ is a projection on a closed subspace of $L^2(G\times \Ghat)$.
\end{enumerate}
\end{proposition}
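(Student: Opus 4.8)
The plan is to treat the three assertions in turn, letting the isometry of part (1) carry most of the weight; parts (2) and (3) then follow from it by adjunction and elementary Hilbert-space algebra.

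For part (1), I would fix $x\in G$ and read \eqref{eq:B} as the statement that $\cB[f](x,\cdot)$ is the group Fourier transform of the function $y\mapsto f(y)\,a(x^{-1}y)$, which belongs to $\cC_c(G)\subset L^2(G)$ whenever $f\in\cC_c(G)$. The Plancherel formula \eqref{eq_plancherel_formula} then yields, for each fixed $x$, the identity $\int_{\Ghat}\|\cB[f](x,\pi)\|_{HS(\cH_\pi)}^2\,d\mu(\pi)=\int_G |f(y)|^2\,|a(x^{-1}y)|^2\,dy$. Integrating in $x$ and applying Tonelli (the integrand is nonnegative) gives $\|\cB[f]\|_{L^2(G\times\Ghat)}^2=\int_G |f(y)|^2\big(\int_G |a(x^{-1}y)|^2\,dx\big)\,dy$. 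Since $G$ is unimodular, the map $x\mapsto x^{-1}y$ is measure-preserving, so the inner integral equals $\|a\|_{L^2(G)}^2=1$ and the whole expression collapses to $\|f\|_{L^2(G)}^2$. This proves $\cB[f]\in L^2(G\times\Ghat)$ together with the claimed norm identity on the dense subspace $\cC_c(G)$; by linearity and density, $\cB$ extends uniquely to an isometry $L^2(G)\to L^2(G\times\Ghat)$.

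For part (2), I would identify $\cB^*$ by computing $(\cB[f],\tau)_{L^2(G\times\Ghat)}$ for $f\in\cC_c(G)$ and $\tau\in L^2(G\times\Ghat)$ and recognising it as $(f,\cB^*[\tau])_{L^2(G)}$. Expanding $\cB[f](x,\pi)=\int_G f(y)\,\sF_{x,\pi}(y)\,dy$ inside the Hilbert-Schmidt trace and exchanging the $y$-integral with the trace and with the $(x,\pi)$-integral (a Fubini step) produces exactly the stated formula, using $(\sF_{x,\pi}(y))^*=\bar a(x^{-1}y)\,\pi(y)$ and $\tr_{\cH_\pi}(A^*)=\overline{\tr_{\cH_\pi}A}$. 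The ``in the sense that'' reformulation is then just the adjunction identity $(\cB^*[\tau],f)_{L^2(G)}=(\tau,\cB[f])_{L^2(G\times\Ghat)}$ combined with \eqref{eq:B}. For the kernel formula \eqref{eq:2.75} with $\tau=(\id\otimes\cF)\kappa$, rather than invoking the inversion formula I would work on the Fourier side: for fixed $x$, Plancherel in the $\pi$-variable turns $(\cB[f](x,\cdot),\tau(x,\cdot))_{L^2(\Ghat)}$ into $(f\,a(x^{-1}\,\cdot),\kappa_x)_{L^2(G)}=\int_G f(y)\,a(x^{-1}y)\,\overline{\kappa_x(y)}\,dy$; integrating in $x$ and comparing with $(f,\cB^*[\tau])_{L^2(G)}$ forces $\cB^*[\tau](y)=\int_G \kappa_x(y)\,\bar a(x^{-1}y)\,dx$, which is precisely the convolution $(\kappa_{\,\cdot}(y)*\bar a)(y)$ in the first variable.

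For part (3), the isometry of part (1) gives $(\cB^*\cB f,f)_{L^2(G)}=\|\cB f\|_{L^2(G\times\Ghat)}^2=\|f\|_{L^2(G)}^2=(f,f)_{L^2(G)}$ for all $f$, and polarisation upgrades this to $\cB^*\cB=\id_{L^2(G)}$. Setting $P:=\cB\cB^*$, one checks $P^*=P$ and $P^2=\cB(\cB^*\cB)\cB^*=\cB\cB^*=P$, so $P$ is an orthogonal projection; since $P$ fixes every element of the range $\cB(L^2(G))$ (which is closed, being the isometric image of a Hilbert space) and has range contained in it, $P$ is exactly the orthogonal projection onto the closed subspace $\cB(L^2(G))$.

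\textbf{Main obstacle.} The delicate point is the measure-theoretic bookkeeping in part (2): justifying the interchange of the $y$-integral with the operator trace and with the $(x,\pi)$-integral over $G\times G\times\Ghat$, where the expressions are not a priori absolutely convergent. Carrying out the computation first on the dense class $f\in\cC_c(G)$, and for $\tau=(\id\otimes\cF)\kappa$ with $\kappa\in L^2(G\times G)$ — where the Plancherel-based derivation of \eqref{eq:2.75} renders every integral an honest $L^2$ pairing — and only afterwards extending by density, should make all exchanges legitimate and sidestep any appeal to the inversion formula (hence to amenability). Parts (1) and (3) are then essentially formal consequences of the Plancherel theorem.
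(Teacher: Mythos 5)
Your proposal is correct and takes essentially the same route as the paper's proof: Plancherel in the $\pi$-variable for fixed $x$ followed by integration in $x$ (using unimodularity) for part (1), the adjunction identity $(\cB^*[\tau],f)_{L^2(G)}=(\tau,\cB[f])_{L^2(G\times\Ghat)}$ combined with Parseval in the $\pi$-variable for part (2) and the kernel formula, and the standard Hilbert-space algebra ($\cB^*\cB=\id$ by polarisation, $P=\cB\cB^*$ self-adjoint idempotent) for part (3). You simply make explicit several steps the paper leaves implicit, notably the Tonelli/unimodularity bookkeeping and the derivation of \eqref{eq:2.75} as an $L^2$ pairing.
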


\begin{proof}
From~\eqref{eq:B} and the Plancherel formula \eqref{eq_plancherel_formula}, we obtain
$$
\int_{\Ghat}
\|\cB[f](x,\pi)\|_{HS(\cH_\pi)}^2
d\mu(\pi) = \|f\, a(x^{-1}\,\cdot)\|_{L^2(G)}^2, 
\qquad x\in G.
$$
Integrating against $dx$ yields Part (1). 

\smallskip 

Part (2) follows from 
$$
\int_{G\times \Ghat}  \tr_{\cH_\pi} \left( \tau(x,\pi) \left(\cF\left ( fa(x^{-1}\,\cdot)\right )(\pi)\right )^* \right) dxd\mu(\pi)
=
\int_{G\times \Ghat}  \tr_{\cH_\pi} \left( \tau(x,\pi) \left(\cB[f](x,\pi)\right )^* \right) dxd\mu(\pi),
$$
by \eqref{eq:B}.

Part 3 follows from Part (1) since it implies for any $f,g\in L^2(G)$
$$
(f,g)_{L^2(G)}= (\cB [f],\cB[g])_{L^2(G\times \Ghat)}=(\cB^*\cB[f],g)_{L^2(G)}.
$$
\end{proof}

As a corollary, considering in each space $\mathcal H_\pi$ an orthonormal basis
$(\varphi_k(\pi))_{k\in I_\pi}$, where $I_\pi\subset \N$,
we obtain an integral representation of square integrable functions as a superposition of  wave packets (see~\cite{FF1,FL}). Set for $(x,\pi)\in G\times \widehat G$ and $k,\ell\in I_\pi$,
\[
g_{x,\pi,k,\ell}(y):=  \left(\sF_{x,\pi}(y)^*\varphi_k(\pi),\varphi_\ell(\pi)\right)_{\cH_\pi},\;y\in G,
\]
where $(\cdot,\cdot)_{\cH_\pi}$ denotes the inner product of $\mathcal H_\pi$.
The frame properties in Proposition \ref{prop_B} (3) implies the following  decomposition.

\begin{corollary}\label{cor:frame}
A function  $f\in L^2(G)$ decomposes  in $L^2(G)$ as
    \[
f=\int_{G\times \widehat G}
\sum_{k,\ell\in I_\pi} \left( f, g_{x,\pi,k,\ell}\right)_{L^2(G)} \; g_{x,\pi,k,\ell} \; dxd\mu(\pi),
    \]
    in the sense that 
    $$
    \|f\|_{L^2(G)}^2 = \int_{G\times \Ghat} 
    \sum_{k,\ell\in I_\pi} 
    |\left( f, g_{x,\pi,k,\ell}\right)_{L^2(G)} |^2 dx d\mu(\pi),
    $$
    or equivalently for any $f_1,f_2\in L^2(G)$
    $$
    (f_1,f_2)_{L^2(G)}
    =\int_{G\times \widehat G}
\sum_{k,\ell\in I_\pi} \left( f_1, g_{x,\pi,k,\ell}\right)_{L^2(G)} \;
\overline{\left( f_2, g_{x,\pi,k,\ell}\right)_{L^2(G)}}
 \; dxd\mu(\pi).
    $$
\end{corollary}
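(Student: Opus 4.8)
The plan is to recognise the scalar coefficients $(f,g_{x,\pi,k,\ell})_{L^2(G)}$ as the matrix entries of the Hilbert--Schmidt operator $\cB[f](x,\pi)$ in the chosen orthonormal basis, and then to read off both identities directly from the isometry property of $\cB$ proved in Proposition \ref{prop_B}. The whole content is the relation $\cB^*\cB=\id$; no genuine analytic estimate is needed beyond it.

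First I would fix the basis $(\varphi_k(\pi))_{k\in I_\pi}$ and unwind the definition of $g_{x,\pi,k,\ell}$. Using $(A^*u,v)_{\cH_\pi}=(u,Av)_{\cH_\pi}$ and $\overline{(u,v)_{\cH_\pi}}=(v,u)_{\cH_\pi}$, one gets $\overline{g_{x,\pi,k,\ell}(y)}=(\sF_{x,\pi}(y)\varphi_\ell(\pi),\varphi_k(\pi))_{\cH_\pi}$, and hence, since $\cB[f](x,\pi)=\int_G f(y)\,\sF_{x,\pi}(y)\,dy$,
$$
(f,g_{x,\pi,k,\ell})_{L^2(G)}=\int_G f(y)\,(\sF_{x,\pi}(y)\varphi_\ell(\pi),\varphi_k(\pi))_{\cH_\pi}\,dy=(\cB[f](x,\pi)\varphi_\ell(\pi),\varphi_k(\pi))_{\cH_\pi}.
$$
In other words, $(f,g_{x,\pi,k,\ell})_{L^2(G)}$ is exactly the $(k,\ell)$ matrix entry of $\cB[f](x,\pi)$ in the basis $(\varphi_k(\pi))$.

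Second, I would expand the Hilbert--Schmidt norm in this basis, $\|\cB[f](x,\pi)\|_{HS(\cH_\pi)}^2=\sum_{k,\ell}|(\cB[f](x,\pi)\varphi_\ell(\pi),\varphi_k(\pi))_{\cH_\pi}|^2=\sum_{k,\ell}|(f,g_{x,\pi,k,\ell})_{L^2(G)}|^2$. Integrating over $G\times\widehat G$ and exchanging the (nonnegative) sum with the integral by Tonelli, the left-hand side becomes $\|\cB[f]\|_{L^2(G\times\widehat G)}^2$, which equals $\|f\|_{L^2(G)}^2$ by Proposition \ref{prop_B}(1). This yields the norm identity. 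For the polarised version I would start from $(f_1,f_2)_{L^2(G)}=(\cB[f_1],\cB[f_2])_{L^2(G\times\widehat G)}$ (again Proposition \ref{prop_B}(1)) and use the basis expansion of the trace, namely $\tr_{\cH_\pi}(\cB[f_1](x,\pi)\,\cB[f_2](x,\pi)^*)=\sum_{k,\ell}(f_1,g_{x,\pi,k,\ell})_{L^2(G)}\,\overline{(f_2,g_{x,\pi,k,\ell})_{L^2(G)}}$, the exchange of sum and integral being justified here by Cauchy--Schwarz. The first displayed line of the corollary is then understood in the weak sense encoded by these two scalar identities.

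I do not expect a real obstacle: the only care required is the bookkeeping of the adjoints and inner-product conjugations in identifying the coefficients with matrix entries (as above), together with the standard facts that the basis field $(\varphi_k(\pi))_{k\in I_\pi}$ may be taken measurable in the direct-integral setting and that the expansions of the Hilbert--Schmidt norm and of the trace are pointwise in $\pi$, so that the interchanges with $\int_{G\times\widehat G}dx\,d\mu(\pi)$ are legitimate.
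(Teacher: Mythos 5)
Your proposal is correct and takes essentially the same approach as the paper: identify the coefficients $(f,g_{x,\pi,k,\ell})_{L^2(G)}$ with the matrix entries of $\cB[f](x,\pi)$ in the basis $(\varphi_k(\pi))$, expand the Hilbert--Schmidt norm pointwise in $(x,\pi)$, and invoke the isometry of Proposition \ref{prop_B}. The only cosmetic difference is in the polarised identity, where the paper deduces it from the norm identity via $\|f_1\pm f_2\|^2$ and $\|f_1\pm if_2\|^2$ while you expand $\tr_{\cH_\pi}\left(\cB[f_1](x,\pi)\,\cB[f_2](x,\pi)^*\right)$ directly; both are immediate.
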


\begin{proof}
By Proposition \ref{prop_B}, we have for any $f\in L^2(G)$
\begin{align*}
   \|f\|^2_{L^2(G)}
=  \|\cB[f]\|^2_{L^2(G\times \Ghat)}= \int_{G\times \Ghat} \|\cB[f](x,\pi) \|_{HS(\cH_\pi)}^2 dx d\mu(\pi). 
\end{align*}
The Hilbert-Schmidt norms may be written in the basis $(\varphi_k)$ as
$$
\|\cB[f](x,\pi) \|_{HS(\cH_\pi)}^2
=
\sum_{k,\ell} |(\cB[f](x,\pi) \varphi_k,\varphi_\ell)_{\cH_\pi}|^2
$$
 with
\begin{align*}
(\cB[f](x,\pi) \varphi_k,\varphi_\ell)_{\cH_\pi} 
&= \int_G f(y) (\sF_{x,\pi}(y) \varphi_k,\varphi_\ell)_{\cH_\pi} dy\\
&= \int_G f(y)\overline{ (\varphi_\ell, \sF_{x,\pi}(y) \varphi_k)}_{\cH_\pi} dy\\
&= \int_G f(y)\overline{ (\sF_{x,\pi}(y)^* \varphi_\ell, \varphi_k)}_{\cH_\pi} dy= (f,g_{x,\pi,\ell,k})_{L^2(G)}.
\end{align*}
We then conclude on $  (f_1,f_2)_{L^2(G)}$ by considering $\| f_1\pm f_2\|^2$ and $\| f_1\pm if_2\|^2$. 
\end{proof}

\subsubsection{The quantization $\Op^{\rm Wick}$}
We can now define the Wick quantization $\Op^{\rm Wick} = \Op^{{\rm Wick},a}$. 
It depends on the function $a$ fixed at the beginning of the  Section~\ref{sec:wick} with $\|a\|_{L^2(G)}=1$.
We set
$$
\Op^{\rm Wick} (\sigma) f= \cB^* \sigma \cB[f], 
\qquad f\in L^2(G), \ \sigma\in L^\infty(G\times \Ghat).
$$
Here, $L^\infty(G\times \Ghat)$ denotes the space 
of symbols $\sigma=\{\sigma(x,\pi) \ : \ (x,\pi)\in G\times \Ghat\}$ which are bounded in $(x,\pi)\in G\times \Ghat$, 
i.e. a measurable field of operators in $(x,\pi)\in G\times \Ghat$ such that 
$$
\|\sigma\|_{L^\infty(G\times\Ghat)} : =\sup_{(x,\pi)\in G\times \Ghat} \|\sigma(x,\pi)\|_{\sL(\cH_\pi)}
$$
is finite, the supremum referring to the essential supremum for the measure $dxd\mu$ on $G\times \Ghat$. 
This is naturally a Banach space (even a von Neumann algebra). 
Moreover, it acts naturally continuously on $L^2(G\times \Ghat)$ by left  composition (and also 
right composition) with
$$
\|\sigma \tau \|_{L^2(G\times \Ghat)} 
\leq
\|\sigma  \|_{L^\infty(G\times \Ghat)}
\| \tau \|_{L^2(G\times \Ghat)}, 
\qquad \sigma\in L^\infty(G\times \Ghat), \ 
\tau\in L^2(G\times \Ghat).
$$
This implies that the quantization $\Op^{\rm Wick}$ is well defined:
 
\begin{proposition}
	The symbolic quantization $\Op^{\rm Wick}$ is well defined on $L^\infty(G\times \Ghat)$ and satisfies
$$
\forall \sigma\in L^\infty(G\times\Ghat),\qquad 
\|\Op^{\rm Wick}(\sigma)\|_{\sL(L^2(G))}\leq \|\sigma\|_{L^\infty(G\times \Ghat)}.
$$	
\end{proposition}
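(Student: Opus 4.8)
The plan is to read $\Op^{\rm Wick}(\sigma)=\cB^*\circ\sigma\circ\cB$ as a composition of three maps, each of which has already been controlled in the excerpt, and simply to chain their operator-norm bounds. Concretely, for $f\in L^2(G)$ I would factor the estimate as
$$
\|\Op^{\rm Wick}(\sigma)f\|_{L^2(G)}
=\|\cB^*(\sigma\,\cB[f])\|_{L^2(G)}
\le \|\cB^*\|_{\sL}\;\|\sigma\,\cB[f]\|_{L^2(G\times\Ghat)}
\le \|\cB^*\|_{\sL}\;\|\sigma\|_{L^\infty(G\times\Ghat)}\;\|\cB[f]\|_{L^2(G\times\Ghat)},
$$
and then read off the three factors. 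Here $\|\cB^*\|_{\sL}$ is shorthand for the operator norm of $\cB^*$ as a map $L^2(G\times\Ghat)\to L^2(G)$.

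First I would invoke Proposition \ref{prop_B}(1) to say that $\cB$ maps $L^2(G)$ isometrically into $L^2(G\times\Ghat)$, so that $\cB[f]$ is a genuine element of $L^2(G\times\Ghat)$ with $\|\cB[f]\|_{L^2(G\times\Ghat)}=\|f\|_{L^2(G)}$. Next, the continuity of the left action of $L^\infty(G\times\Ghat)$ on $L^2(G\times\Ghat)$ recalled just above the statement guarantees that $\sigma\,\cB[f]$ again lies in $L^2(G\times\Ghat)$, with $\|\sigma\,\cB[f]\|_{L^2(G\times\Ghat)}\le \|\sigma\|_{L^\infty(G\times\Ghat)}\,\|\cB[f]\|_{L^2(G\times\Ghat)}$. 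This is precisely what makes $\Op^{\rm Wick}(\sigma)$ well defined: its output $\cB^*(\sigma\,\cB[f])$ then belongs to $L^2(G)$, so $\Op^{\rm Wick}(\sigma)$ maps $L^2(G)$ into itself.

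The one small point that deserves care --- and the only place where anything is genuinely \emph{used} rather than \emph{quoted} --- is the norm of $\cB^*$. Since Proposition \ref{prop_B}(3) gives $\cB^*\cB=\id_{L^2(G)}$, the map $\cB$ is an isometry, hence its adjoint $\cB^*$ is a coisometry; in particular $\|\cB^*\|_{\sL}\le 1$ (equivalently, one reads this off $\|\cB^*\|=\|\cB\|=1$). Substituting $\|\cB^*\|_{\sL}\le 1$ and $\|\cB[f]\|_{L^2(G\times\Ghat)}=\|f\|_{L^2(G)}$ into the displayed chain yields $\|\Op^{\rm Wick}(\sigma)f\|_{L^2(G)}\le \|\sigma\|_{L^\infty(G\times\Ghat)}\,\|f\|_{L^2(G)}$ for every $f\in L^2(G)$, which is exactly the claimed bound and simultaneously confirms that $\Op^{\rm Wick}(\sigma)\in\sL(L^2(G))$. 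There is no real obstacle here: the statement is just a packaging of the isometry of $\cB$, the contraction property of $\cB^*$, and the continuity of the left $L^\infty$-action, so the only ``hard part'' is the bookkeeping of the three norms in the right order.
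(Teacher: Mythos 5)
Your proposal is correct and follows essentially the same route as the paper: the paper's proof is exactly the chain $\|\cB^*\sigma\cB[f]\|_{L^2(G)}\le \|\cB^*\|\,\|\sigma\|_{L^\infty(G\times\Ghat)}\,\|\cB\|\,\|f\|_{L^2(G)}$, concluded by noting that $\cB$ is an isometry so that $\|\cB\|=\|\cB^*\|=1$. Your additional remarks on well-definedness (via the continuity of the left $L^\infty$-action on $L^2(G\times\Ghat)$) and on $\cB^*$ being a coisometry are consistent with, and implicit in, the paper's argument.
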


\begin{proof}
We have for any $f\in L^2(G)$:
\begin{align*}
	\|\Op^{\rm Wick} (\sigma)f\|_{L^2(G)}
&=\|\cB^* \sigma \cB[f]\|_{L^2(G)}
\\&\leq \|\cB^*\|_{\sL(L^2(G\times \Ghat),L^2(G))}
\|\sigma\|_{L^\infty(G\times \Ghat)}
\|\cB\|_{\sL(L^2(G), L^2(G\times \Ghat))}\|f\|_{L^2(G)}.
\end{align*}
Since $\cB$ is an isometry, the operator norms of $\cB$ and $\cB^*$ are equal to 1. 
\end{proof}

\begin{remark}
	In the case of $G=\bR^n$ as in Remark~\ref{rem:luke}, and for $a$ being  a Gaussian function, we recognize  $\Op^{\rm Wick}$ as the Wick quantization \cite{folland,Lerner,corobook}. 
 \end{remark}

As an example, we observe that  Proposition~\ref{prop_B} (3) may be rephrased as  $\Op^{\rm Wick}(\id) =\id_{L^2(G)}$ where $\id$ is the symbol $\id = \{ \id_{\cH_\pi} , (x,\pi)\in G\times \Ghat)\}$. 

\medskip

The following computation will allow for the comparison  between the Wick and Kohn-Nirenberg quantizations on  $\cC_b(G,\cF L^1(G))$; note that a symbol in $\cC_b(G,\cF L^1(G))$ is in $L^\infty (G\times 
\Ghat)$.
\begin{lemma}
\label{lem_kW}
	If a symbol $\sigma$ is in $\cC_b(G,\cF L^1(G))$, 
	then 
	$$
	\Op^{\rm Wick} (\sigma) f (x) = f * \kappa^{\rm Wick}_x(x), \qquad f\in \cC_c(G), \, x\in G,
	$$
	where $\kappa^{\rm Wick}\in \cC_b(G, L^1(G))$ is given by:
	\begin{align*}
\kappa^{\rm Wick}_x(w) 
&=
\int_G a(z^{-1}xw^{-1})\bar a(z^{-1}x) \kappa_z(w) dz	
\\
&=
\int_G a(z' w^{-1})\bar a(z') \kappa_{xz'^{-1}}(w) dz',
\end{align*}
	and $\kappa\in \cC_b(G,L^1(G))$ is the convolution kernel of $\sigma$ in the sense that 
	$\sigma(x,\pi) = \cF\kappa_x (\pi)$.
\end{lemma}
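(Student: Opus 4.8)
The plan is to compute $\cB^*\sigma\cB[f]$ explicitly for $f\in\cC_c(G)$ and read off its convolution kernel. Since $\sigma\in\cC_b(G,\cF L^1(G))$ lies in $L^\infty(G\times\Ghat)$, Proposition~\ref{prop_B}(1) together with the continuous action of $L^\infty(G\times\Ghat)$ on $L^2(G\times\Ghat)$ guarantees that $\tau:=\sigma\cB[f]\in L^2(G\times\Ghat)$. Writing $\sigma(x,\pi)=\cF\kappa_x(\pi)$ and $\cB[f](x,\pi)=\cF\big(f\,a(x^{-1}\cdot)\big)(\pi)$ via \eqref{eq:B}, the convolution identity \eqref{eq_cFf1*f2} gives
$$
\tau(x,\pi)=\cF\kappa_x(\pi)\,\cF\big(f\,a(x^{-1}\cdot)\big)(\pi)=\cF\Big(\big(f\,a(x^{-1}\cdot)\big)*\kappa_x\Big)(\pi),
$$
so that, by injectivity of $\cF$, we may identify $\tau=(\id\otimes\cF)\kappa_\tau$ with convolution kernel $\kappa_{\tau,z}=(f\,a(z^{-1}\cdot))*\kappa_z$.

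First I would apply the formula \eqref{eq:2.75} for $\cB^*$ to $\tau$ and expand the convolution using \eqref{def:convolution}, obtaining
$$
\Op^{\rm Wick}(\sigma)f(x)=\int_G \kappa_{\tau,z}(x)\,\bar a(z^{-1}x)\,dz=\int_G\Big(\int_G f(u)\,a(z^{-1}u)\,\kappa_z(u^{-1}x)\,du\Big)\bar a(z^{-1}x)\,dz.
$$
The goal is then to exchange the order of integration, carry out the $dz$-integral first, and recognise the result as $\int_G f(u)\,\kappa^{\rm Wick}_x(u^{-1}x)\,du=f*\kappa^{\rm Wick}_x(x)$ with $\kappa^{\rm Wick}_x(w)=\int_G a(z^{-1}xw^{-1})\,\bar a(z^{-1}x)\,\kappa_z(w)\,dz$ after the substitution $w=u^{-1}x$. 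The second expression for $\kappa^{\rm Wick}_x$ follows from the change of variable $z'=z^{-1}x$, whose Jacobian is trivial because $G$ is unimodular (hence Haar measure is inversion-invariant) and right-invariant.

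The step requiring care — and the main obstacle — is the Fubini interchange, since \eqref{eq_cFf1*f2} and \eqref{eq:2.75} are only stated for regular functions while $\kappa_z$ is merely $L^1$ and, crucially, $a$ need not be integrable, so the two factors of $a$ cannot be bounded separately. The point is to keep them paired and use Cauchy--Schwarz together with the invariance of Haar measure, $\int_G |a(z^{-1}u)|\,|a(z^{-1}x)|\,dz\le\|a\|_{L^2(G)}^2=1$. Dominating $|\kappa_z|$ by $K:=\sup_{z'\in G}|\kappa_{z'}|\in L^1(G)$, which is finite since $\sigma\in\cC_b(G,\cF L^1(G))$, this produces the integrable majorant $|f|*K$ and legitimises the interchange of integrals.

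Finally, the same Cauchy--Schwarz pairing yields $\kappa^{\rm Wick}\in\cC_b(G,L^1(G))$: from the second expression, $|\kappa^{\rm Wick}_x(w)|\le K(w)\int_G|a(z'w^{-1})|\,|a(z')|\,dz'\le K(w)$ uniformly in $x$, whence $\int_G\sup_x|\kappa^{\rm Wick}_x(w)|\,dw\le\|\sigma\|_{\cA_0}<\infty$; continuity of $x\mapsto\kappa^{\rm Wick}_x$ in $L^1(G)$ then follows from the continuity of $a$ and of $z\mapsto\kappa_z$ by dominated convergence against the same majorant $K$.
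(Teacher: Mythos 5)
Your proof is correct and follows essentially the same route as the paper's: compute $\sigma\cB[f]$ via \eqref{eq:B} and \eqref{eq_cFf1*f2}, apply the kernel formula \eqref{eq:2.75} for $\cB^*$, expand the double integral and identify $f*\kappa^{\rm Wick}_x(x)$, then verify $\kappa^{\rm Wick}\in\cC_b(G,L^1(G))$ by the same Cauchy--Schwarz pairing of the two factors of $a$. Your explicit justification of the Fubini interchange (domination by $|f|*K$ with $K=\sup_{x'}|\kappa_{x'}|$) is a detail the paper leaves implicit, and is a welcome addition rather than a deviation.
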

We will call $\kappa^{\rm Wick}$ the Wick convolution kernel of $\sigma$. Let us denote by $\sigma^{\rm Wick}$ the symbol associated with $\kappa^{\rm Wick}$, $\sigma^{\rm Wick}= \mathcal F \kappa^{\rm Wick}$. Lemma~\ref{lem_kW} can be rephrased as 
$$
	\Op^{\rm Wick} (\sigma)=\Op^{\rm KN} (\sigma^{\rm Wick}).
 $$

\begin{proof}
We first check readily that $\kappa^{\rm Wick}\in \cC_b(G,L^1(G))$ with
$$
\int_G \sup_{x\in G}| \kappa_x^{\rm Wick}(w)| dw
\leq \int_G \sup_{x'\in G}| \kappa_{x'}(w)|  
\left(\int_G |a|(z' w^{-1} )|a|(z')  dz'\right)\,dw
\leq \int_G \sup_{x'\in G}| \kappa_{x'}(w)| dw ,
$$
by the Cauchy-Schwartz inequality since $\|a\|_{L^2(G)}=1$.
\smallskip 

Let us now prove the core of the statement. Let $f\in \cC_c(G)$.  Properties \eqref{eq_cFf1*f2} and~\eqref{eq:B} yield for $(x,\pi)\in G\times\Ghat$  
$$
\sigma \cB[f](x,\pi) = (\widehat \kappa_x \cF (f a(x^{-1}\,\cdot)))(\pi)
=\cF\left ( (f\, a(x^{-1}\cdot ))*\kappa_x\right )(\pi)
$$
 so by equation~\eqref{eq:2.75} of Part 2 of Proposition \ref{prop_B}, we obtain for $x\in G$
\begin{align*}
\Op^{\rm Wick} (\sigma) f (x) 
&=\int_G  (f\, a(z^{-1}\,\cdot ))*\kappa_z(x) \, \bar  a(z^{-1}x) 
dz\\
&=\int_{G\times G} f(y) a(z^{-1}y) \kappa_z(y^{-1} x)\, \bar  a(z^{-1}x) 
dydz,
\end{align*}
and we recognise 
$f*\kappa^{\rm Wick}_x(x).$
\end{proof}

\subsubsection{Some properties of $\Op^{\rm KN}$ and $\Op^{\rm Wick}$}
\label{subsubsec_compOPs}
In our definitions of the quantizations, we  choose to  act on the left by $\tau$ in \eqref{eq_OpKN} or equivalently to place $\kappa_x$ on the right of the convolution product in \eqref{eq_OpKN} 
 while we made choices in the writing of $\sF_{x,\pi}$.
These choices imply that our quantization interact well with the left translations $L_{x_0}$ by $x_0$ on functions, i.e. $L_{x_0}f(x) = f(x_0x)$ for any function $f$ defined on $G$, and also on symbols: $L_{x_0}\sigma (x,\pi) = \sigma(x_0 x,\pi)$. Indeed, 
we check readily that 
$$
\Op^{{\rm Wick}}(L_{x_0} \sigma) 
=
L_{x_0}\Op^{{\rm Wick}
}(\sigma)L_{x_0}^{-1}
\quad\mbox{and}\quad 
\Op^{\rm KN}(L_{x_0}\sigma)=
L_{x_0} \Op^{\rm KN}(\sigma) L_{x_0}^{-1} 
.
$$

The Wick quantization $\Op^{\rm Wick}$ has the advantage of preserving self-adjointness and of being naturally positive.
Indeed, for any $\sigma\in L^\infty (G\times \Ghat)$, we have
$$
(\Op^{\rm Wick}(\sigma))^* = \cB^* \sigma^* \cB=\Op^{\rm Wick}(\sigma^*),
$$
so
if $\sigma$ is self-adjoint in the sense that $\sigma(x,\pi)=\sigma(x,\pi)^*$ for almost all $(x,\pi)\in G\times \Ghat$, then $\Op^{\rm Wick}(\sigma)$ is self-adjoint. 
Moreover, 
 if $\sigma$ is a non-negative symbol in the sense that the operator $\sigma(x,\pi)$ is bounded below by 0 for almost every $(x,\pi)\in G\times \Ghat$, then 
the corresponding operator acting on $L^2(G\times \Ghat)$ is also non-negative so 
\begin{equation}
	\label{positivity}
	(\Op^{\rm Wick}(\sigma)f,f)_{L^2(G)} = 
	(\sigma \cB[f],\cB[f])_{L^2(G\times\Ghat)}\geq 0 
	\qquad \sigma\in L^\infty(G\times\Ghat), \ f\in \cC_c(G),
\end{equation}
and
$\Op^{\rm Wick}(\sigma)$ is a non-negative operator on $L^2(G)$.
\smallskip

In general, the Kohn-Nirenberg quantization $\Op^{\rm KN}$ will not be positive.
However,  weaker properties of positivity  may be recovered in certain cases via  G\r  arding inequalities in pseudodifferential calculi.
The rest of this paper is devoted to showing  G\r  arding inequalities  in the case of graded nilpotent Lie groups and compact Lie groups.

\section{G\r  arding inequality on compact  Lie groups}
\label{sec_Gcomp}

Here, $G$ is a connected compact Lie group. 
Automatically, all the technical assumptions  mentioned in Section \ref{sec_quantization} (locally compact, unimodular, type I, amenable) are satisfied.
In this case, every irreducible representation is finite dimensional, the dual set $\Ghat$ is discrete and the Plancherel measure is known explicitly:
$\mu(\{\pi\})=d_\pi$ is the dimension of $\pi\in \Ghat$, so that we have the Plancherel formula: 
$$
\|f\|_{L^2(G)}^2= \sum_{\pi\in \Ghat} d_\pi \|\widehat f(\pi)\|_{HS(\cH_\pi)}^2.
$$
A symbol is a family $\sigma = \{\sigma(x,\pi) \in \sL(\cH_\pi) \, :\, (x,\pi)\in G\times \Ghat\}$ of finite dimensional linear maps parametrised by $(x,\pi)$, each acting on the (finite dimensional) space of the representation. 
We can define the Fourier transform not only of integrable functions, but also of any distributions.

\subsection{The pseudodifferential calculus}
\label{subsec_pseudoC_compact}
In this section, we set some notations and recall briefly the global symbol classes defined on $G$ together with some properties of the pseudodifferential calculus. 
We refer to 
\cite{FJFA2015} for more details.

\subsubsection{Definitions}

We start with general definitions. 
We fix a basis $X_1,\ldots,X_n$ for the Lie algebra $\fg$ of the group $G$. 
We keep the same notation for the associated left-invariant vector fields on $G$. For a multi-index $\alpha=(\alpha_1,\ldots,\alpha_n)\in \bN_0^n$, we set $X^\alpha=X_1^{\alpha_1}\ldots X_n^{\alpha_n}$.
For $s\in \N_0$, the Sobolev space $H^s(G)$ 
is the space of the functions $f\in L^2(G)$  such that 
\[
\| f\|_{H^s(G)}= \sup_{|\alpha|=s}\| X^\alpha f\|_{L^2(G)} <+\infty.
\]
The Sobolev spaces $H^s(G)$ with $s>0$ are then defined by interpolation and those with $s<0$ by duality. 

We fix a scalar product on $\fg$ that is invariant under the adjoint action.
The Laplace-Beltrami operator is the differential operator
$\cL=-X_1^2-\ldots - X_n^2$
for any orthonormal basis $X_1,\ldots,X_n$ of~$\fg$. 
Identified with an element of the universal enveloping algebra and keeping the same notation for a representation $\pi$ of $G$ and its infinitesimal counterpart for $\fg$, $\pi(\cL)$ is scalar when $\pi$ is irreducible.
$$
\widehat \cL(\pi):=\pi(\cL)=\lambda_\pi \id_{\cH_\pi},
$$
with $\lambda_\pi\geq 0$. In fact, $\lambda_1=0$
when $\pi$ is the trivial representation 1, while 
$\lambda_\pi>0$ when $\pi\neq 1$. 
\smallskip 

Let us now define symbol classes. 
Let $m\in \bR$ and $1\geq \rho \geq \delta\geq 0$. 
A symbol $\sigma$ is in $S_{\rho,\delta}^m(G)$
when  for any multi-indices  $\alpha,\beta$, there exists $C=C(\alpha,\beta)\geq 0$ such that
$$
\| X^\beta \Delta^\alpha \sigma(x,\pi) \|_{\sL(\mathcal H_\pi)} \leq C (1+\lambda_\pi)^{\frac {m-\rho|\alpha|+\delta|\beta|}2},\qquad 
(x,\pi)\in G\times \Ghat.$$
Above, $\Delta^\alpha$ denotes the intrinsic difference operators (see \cite{FJFA2015,FJFA2020} for more details) or 
 the RT-difference operators (see \eqref{eq_RTdiff} below).
This yields the following semi-norm 
$$
\|\sigma\|_{S_{\rho,\delta}^m,a,b}
:=\max_{|\alpha|\leq a, |\beta|\leq b}
\sup_{(x,\pi)\in G\times \Ghat}
(1+\lambda_\pi)^{-\frac {m-\rho|\alpha|+\delta|\beta|}2}\| X^\beta \Delta^\alpha \sigma(x,\pi) \|_{\sL(\mathcal H_\pi)} .
$$
If $(\rho,\delta)=(1,0)$, we simply write 
$S^m(G)=S^m_{1,0}(G)$.

\smallskip

The following theorem sumarises the main property of the classes of operators obtained by the $\Op^{\rm KN}$-quantization of the classes $S_{\rho,\delta}^m(G)$. As mentioned at the beginning of the section, the reader can refer to~\cite{FJFA2015} where proofs are detailed.

\begin{theorem}
\label{thm_calculus_comp}
For each $m\in \bR$, and $1\geq \rho \geq \delta\geq 0$,
equipped with the semi-norms $\|\cdot \|_{S_{\rho,\delta}^m,a,b}$, 
  $S^m_{\rho,\delta}(G)$ becomes a Fr\'echet space.
The space of operators
 $\Psi_{\rho,\delta}^m(G) := \Op^{\rm KN}(S_{\rho,\delta}^m(G))$ inherits this structure of Fr\'echet space. 
If $\delta\neq 1$, 
the classes of operators
 $\Psi_{\rho,\delta}^\infty(G)=\cup_{m\in \bR}\Psi_{\rho,\delta}^m(G) $ 
 is a pseudodifferential calculus in the sense of  
Definition	\ref{def_pseudo-diff_calculus}.
Moreover, we have the following properties:

\begin{enumerate}
\item The calculus $\Psi_{\rho,\delta}^\infty(G)$ acts continuously on the Sobolev spaces $H^s(G)$ in the following sense: 
if $\sigma\in S_{\rho,\delta}^m(G)$ then $\Op^{\rm KN}(\sigma)$ maps $H^s(G)$ to $H^{s-m}(G)$ for any $s\in \bR$.
Furthermore, the map $\sigma\mapsto \Op^{\rm KN}(\sigma)$ is continuous $S^m_{\rho,\delta}(G)\to \sL(H^s(G),H^{s-m}(G))$.
\item For any $\sigma_1\in S_{\rho,\delta}^{m_1}$
and $\sigma_2\in S_{\rho,\delta}^{m_2}$, we have
$$
\Op^{\rm KN}(\sigma_1)\Op^{\rm KN}(\sigma_2)- \Op^{\rm KN}(\sigma_1\sigma_2)
\in 
\Psi_{\rho,\delta}^{m_1+m_2-(\rho-\delta)}(G).
$$
Furthermore, the map $(\sigma_1,\sigma_2)\mapsto \Op^{\rm KN}(\sigma_1)\Op^{\rm KN}(\sigma_2)- \Op^{\rm KN}(\sigma_1\sigma_2)$ is continuous $S_{\rho,\delta}^{m_1}\times S_{\rho,\delta}^{m_2} \to \Psi_{\rho,\delta}^{m_1+m_2-(\rho-\delta)}(G)$.

\item For any $\sigma\in S_{\rho,\delta}^{m}$, we have
$$
\Op^{\rm KN}(\sigma)^*- \Op^{\rm KN}(\sigma^*)
\in 
\Psi_{\rho,\delta}^{m-(\rho-\delta)}(G).
$$
Furthermore, the map $\sigma\mapsto \Op^{\rm KN}(\sigma)^*- \Op^{\rm KN}(\sigma^*)$ is continuous $S_{\rho,\delta}^{m} \to \Psi_{\rho,\delta}^{m-(\rho-\delta)}(G)$.
\end{enumerate}
 \end{theorem}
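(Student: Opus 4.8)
The plan is to follow the standard route for establishing a symbolic calculus, organised so as to avoid circularity: first the Fr\'echet structure, then the single analytic estimate of $L^2$-boundedness for order-zero symbols, then the composition and adjoint formulae (which are purely symbolic), and finally the Sobolev mapping properties (which combine the previous two). This is the programme carried out in detail in \cite{FJFA2015}, which we follow.

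I would start with the functional-analytic statements. The family $\{\|\cdot\|_{S^m_{\rho,\delta},a,b}\}_{a,b\in\bN_0}$ is countable and separating, hence defines a metrisable locally convex topology on $S^m_{\rho,\delta}(G)$; completeness follows from a diagonal argument, since a Cauchy sequence $(\sigma_j)$ converges, uniformly in $(x,\pi)$ after each application of $X^\beta\Delta^\alpha$, to a limit field satisfying the same symbol estimates. As the correspondence $\sigma\mapsto\kappa_\sigma\mapsto\Op^{\rm KN}(\sigma)$ is injective (the convolution kernel determines the operator and the Fourier transform is injective), one transports this structure to $\Psi^m_{\rho,\delta}(G)$ by declaring $\Op^{\rm KN}$ a topological isomorphism onto its image. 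The continuous inclusions $\Psi^m\subset\Psi^{m'}$ for $m\le m'$ follow at the symbol level from $(1+\lambda_\pi)\ge1$.

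The analytic heart is an $L^2$-boundedness theorem for $\sigma\in S^0_{\rho,\delta}(G)$ when $\delta<1$ (which, under $\delta\le\rho\le1$, is exactly $\delta\neq1$): a Calder\'on--Vaillancourt estimate $\|\Op^{\rm KN}(\sigma)\|_{\sL(L^2(G))}\lesssim\|\sigma\|_{S^0_{\rho,\delta},a,b}$ for some finite $a,b$ depending only on $\dim G$. I would prove this by a Cotlar--Stein almost-orthogonality argument: decompose $\sigma$ dyadically in the spectral parameter $\lambda_\pi$ via a Littlewood--Paley partition built from functions of $\widehat\cL$, estimate each piece through its kernel \eqref{eq:calamity}, and sum the off-diagonal interactions using that each $\Delta^\alpha$ gains $(1+\lambda_\pi)^{-\rho/2}$ while each $X^\beta$ costs only $(1+\lambda_\pi)^{\delta/2}$, so that $\delta<1$ yields summability. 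For the composition and adjoint I would show that both operations return a Kohn--Nirenberg operator and establish the asymptotic expansions
$$
\Op^{\rm KN}(\sigma_1)\Op^{\rm KN}(\sigma_2)=\Op^{\rm KN}(\tau),\qquad \tau(x,\pi)\ \sim\ \sum_{\alpha}\frac1{\alpha!}\,\bigl(\Delta^\alpha\sigma_1\bigr)(x,\pi)\,\bigl(X^\alpha\sigma_2\bigr)(x,\pi),
$$
$$
\Op^{\rm KN}(\sigma)^*=\Op^{\rm KN}(\tau'),\qquad \tau'(x,\pi)\ \sim\ \sum_{\alpha}\frac1{\alpha!}\,\Delta^\alpha X^\alpha\bigl(\sigma(x,\pi)^*\bigr),
$$
where the $X^\alpha$ are the left-invariant fields dual to the $\Delta^\alpha$ in the Taylor expansion of the symbol in the group variable. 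Inserting such a Taylor expansion into \eqref{eq:calamity} and using that the $\Delta^\alpha$ realise the Taylor coefficients on the frequency side, the leading terms are exactly $\sigma_1\sigma_2$ and $\sigma^*$; the term indexed by $\alpha$ lies in $S^{m_1+m_2-(\rho-\delta)|\alpha|}_{\rho,\delta}$, resp. $S^{m-(\rho-\delta)|\alpha|}_{\rho,\delta}$, and the remainder after $|\alpha|=0$ is a genuine symbol of order $m_1+m_2-(\rho-\delta)$, resp. $m-(\rho-\delta)$. Quantizing gives the stated membership in $\Psi^{m_1+m_2-(\rho-\delta)}_{\rho,\delta}(G)$ and $\Psi^{m-(\rho-\delta)}_{\rho,\delta}(G)$, with continuity of the remainder maps obtained by tracking seminorms; that $\Psi^\infty_{\rho,\delta}(G)$ is an algebra stable under adjoints is then immediate. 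Finally, the Sobolev continuity of item~(1) follows by a bootstrap: since $(1+\lambda_\pi)^{t/2}$ is an elliptic symbol of order $t$ with parametrix of order $-t$, the operator $(\id+\cL)^{(s-m)/2}\,\Op^{\rm KN}(\sigma)\,(\id+\cL)^{-s/2}$ is, by the composition result just proven, the quantization of a symbol in $S^0_{\rho,\delta}$, hence $L^2$-bounded, and continuity of $\sigma\mapsto\Op^{\rm KN}(\sigma)$ into $\sL(H^s,H^{s-m})$ is read off the same estimates.

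The main obstacle is concentrated in the two technical estimates. The Calder\'on--Vaillancourt bound in the anisotropic $(\rho,\delta)$ range is delicate because the $\Delta^\alpha$ are not literal derivatives but finite-difference operators attached to matrix coefficients of representations, so the ``integration by parts in frequency'' underlying almost-orthogonality must be replaced by their approximate Leibniz rule together with careful kernel estimates; and proving that the Taylor remainder genuinely belongs to the next symbol class, uniformly in the seminorms, is the most laborious step and is precisely where $\delta<1$ is essential.
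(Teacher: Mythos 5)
The paper offers no proof of Theorem \ref{thm_calculus_comp}: it is presented as a summary of known results, with the reader referred to \cite{FJFA2015}, whose programme (Fr\'echet structure of the symbol classes, a Calder\'on--Vaillancourt $L^2$ bound for order-zero symbols when $\delta\neq 1$, symbolic composition and adjoint expansions with remainder estimates, and Sobolev boundedness by conjugation with powers of $\id+\cL$) is exactly what your sketch reconstructs, including the correct order count $S^{m_1+m_2-(\rho-\delta)|\alpha|}_{\rho,\delta}$ for the $\alpha$-th term of the expansions. Your outline is therefore correct and follows essentially the same route as the paper's cited source, the hard analytic steps you flag (the $L^2$ bound and the Taylor-remainder estimates) being precisely the ones carried out in detail in \cite{FJFA2015}.
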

 
When $\rho>\delta$ and $\rho\geq 1-\delta$,  this calculus coincides with  the H\"ormander pseudodifferential calculus defined locally via charts.

 \subsubsection{Properties of pseudodifferential operators}

Any $\sigma\in S_{\rho,\delta}^m(G)$ admits a distributional  convolution kernel $\kappa: x\mapsto( z\mapsto \kappa_x(z)) \in \cC^\infty(G,\cD'(G))$, i.e. $\sigma(x,\pi) = \widehat \kappa_x(\pi)$ and  
$$
\Op^{\rm KN}(\sigma)f(x) =f*\kappa_x(x), \quad f\in \cD(G), \ x\in G.
$$

In the following, we will use properties of symbols with respect to the RT-difference operators.
Let us recall that the RT-difference operator $\Delta_q$ associated to $q\in \cC^\infty(G)$ is defined via:
\begin{equation}
	\label{eq_RTdiff}
	\Delta_q \widehat \kappa = \cF (q\kappa), \qquad \kappa\in \cD'(G).
\end{equation}
The following property of RT-difference operators follows readily from \cite[Section 5]{FJFA2015}:
\begin{lemma}
\label{lem_contDelta_q}
Let $m\in \bR$ and $1\geq \rho\geq \delta\geq 0$. 
\begin{enumerate}
	\item If $q\in \cD(G)$, then the map $\sigma \mapsto \Delta_q \sigma$ is continuous  $S_{\rho,\delta}^m(G)\to S_{\rho,\delta}^m(G)$. 
Moreover, $q\mapsto \Delta_q$ is continuous $\cD(G)\to \sL(S_{\rho,\delta}^m(G))$. 
	\item The map $\sigma \mapsto \Delta_{q-q(e_G)} \sigma$ is continuous  $S_{\rho,\delta}^m(G)\to S_{\rho,\delta}^{m-(\rho-\delta)}(G)$ for any $m\in \bR$. 
Moreover, $q\mapsto \Delta_{q-q(e_G)}$ is continuous $\cD(G)\to \sL(S_{\rho,\delta}^m(G), S_{\rho,\delta}^{m-(\rho-\delta)}(G))$. 
\end{enumerate}		
\end{lemma}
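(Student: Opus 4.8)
The plan is to deduce both statements from two structural facts about the RT-difference operators recorded in \cite[Section 5]{FJFA2015}. First, \eqref{eq_RTdiff} shows that $\Delta_{q_1}\Delta_{q_2}=\Delta_{q_1q_2}$, so $q\mapsto\Delta_q$ is a unital algebra morphism from $(\cC^\infty(G),\cdot)$ into operators on symbols; in particular $\Delta^\alpha\Delta^\mu=\Delta^{\alpha+\mu}$ since $q^\alpha q^\mu=q^{\alpha+\mu}$. Second, $X^\beta$ commutes with every $\Delta_q$, because $X^\beta$ differentiates the $x$-dependence of the kernel $\kappa_x$ while $\Delta_q$ multiplies its kernel variable by $q$. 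Together these collapse every seminorm of $\Delta_q\sigma$ to a single difference estimate: for the admissible family $\Delta^\alpha=\Delta_{q^\alpha}$,
$$
X^\beta\Delta^\alpha\,\Delta_q\sigma=\Delta_{q^\alpha q}\,X^\beta\sigma ,
$$
so everything reduces to controlling $\Delta_{q'}(X^\beta\sigma)$ for the smooth function $q'=q^\alpha q$, where $X^\beta\sigma\in S^{m+\delta|\beta|}_{\rho,\delta}(G)$.

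The mechanism producing the order gain is vanishing at $e_G$. Using Hadamard's lemma in the coordinate functions $q_1,\dots,q_n$ (which vanish at $e_G$ with independent differentials), any $p\in\cC^\infty(G)$ vanishing to order $\geq\ell$ at $e_G$ factors as $p=\sum_{|\mu|=\ell}q^\mu b_\mu$ with $b_\mu\in\cC^\infty(G)$, whence $\Delta_p=\sum_{|\mu|=\ell}\Delta^\mu\Delta_{b_\mu}$. The defining differences $\Delta^\mu$ lower the order by exactly $\rho|\mu|=\rho\ell$ (this is immediate from the symbol estimates via $\Delta^\alpha\Delta^\mu=\Delta^{\alpha+\mu}$), and the smooth-coefficient operators $\Delta_{b_\mu}$ preserve the class; this gives the substance of \cite[Section 5]{FJFA2015}: for $p$ vanishing to order $\geq\ell$, the map $\Delta_p$ sends $S^{m'}_{\rho,\delta}(G)$ continuously into $S^{m'-\rho\ell}_{\rho,\delta}(G)$, with operator seminorms bounded by finitely many sup-norms of derivatives of $p$, hence continuous in $p$.

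With this in hand both parts follow. For Part (2), put $p=q-q(e_G)$, which vanishes at $e_G$, so $q'=q^\alpha p$ vanishes to order $\geq|\alpha|+1$; applying the gain estimate with $\ell=|\alpha|+1$ to $X^\beta\sigma\in S^{m+\delta|\beta|}_{\rho,\delta}(G)$ yields
$$
\|X^\beta\Delta^\alpha\Delta_{q-q(e_G)}\sigma(x,\pi)\|_{\sL(\cH_\pi)}
\lesssim (1+\lambda_\pi)^{\frac{(m+\delta|\beta|)-\rho(|\alpha|+1)}2},
$$
which is precisely the $S^{m-\rho}_{\rho,\delta}(G)\subset S^{m-(\rho-\delta)}_{\rho,\delta}(G)$ bound (using $\rho\geq\rho-\delta$), and tracking the constants gives the asserted continuity of $q\mapsto\Delta_{q-q(e_G)}$. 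For Part (1), decompose $\Delta_q=q(e_G)\,\id+\Delta_{q-q(e_G)}$: the scalar term preserves $S^m_{\rho,\delta}(G)$ and is continuous in $q$ through $q\mapsto q(e_G)$, while the second term maps into $S^{m-(\rho-\delta)}_{\rho,\delta}(G)\subset S^m_{\rho,\delta}(G)$ by Part (2).

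The step I expect to be the genuine obstacle is the no-loss property at the base of the argument: that $\Delta_b$ for a general smooth $b$ (with no vanishing at $e_G$) maps $S^{m}_{\rho,\delta}(G)$ into itself without changing the order. The factorisation that manufactures the gain cannot establish this, since $b$ admits no Hadamard factorisation and unwinding $\Delta_{q^\alpha b}$ only reproduces difference operators with smooth coefficients, leading to circularity. This base case is exactly the technical content one imports from \cite[Section 5]{FJFA2015}, where it is obtained by a direct kernel and Leibniz analysis; granting it, the factorisation by vanishing order together with the morphism property closes both parts, and the remainder is the routine bookkeeping of seminorms.
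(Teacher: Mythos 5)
Your proposal is sound, but note what the paper itself does for this lemma: nothing --- both parts are cited wholesale from \cite[Section 5]{FJFA2015}, so any actual derivation is ``different'' by default. What you do is import only the no-loss statement (that $\Delta_b$ preserves $S^m_{\rho,\delta}(G)$ for arbitrary smooth $b$, continuously in $b$) --- which on a compact group, where $\cD(G)=\cC^\infty(G)$, \emph{is} Part (1) --- and then genuinely derive Part (2) from it via the multiplicativity $\Delta_{q_1}\Delta_{q_2}=\Delta_{q_1q_2}$, the commutation of $X^\beta$ with $\Delta_q$, and a Hadamard factorisation; your bookkeeping is correct and in fact yields the stronger gain $\Delta_{q-q(e_G)}:S^m_{\rho,\delta}(G)\to S^{m-\rho}_{\rho,\delta}(G)\subset S^{m-(\rho-\delta)}_{\rho,\delta}(G)$. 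This is the mirror image of the mechanism the paper sketches for the graded analogue, where $q$ is split as a Taylor polynomial plus a flat remainder, $q=P^q_N+(q-P^q_N)$, the polynomial part being absorbed by the $\Delta^\alpha$'s and the remainder by kernel estimates; you instead factor the vanishing out to the left, $q-q(e_G)=\sum_{\mu}q^\mu b_\mu$, and push the smooth coefficients into the imported no-loss fact. Three caveats are worth recording. First, ``independent differentials at $e_G$'' is not enough for a \emph{global} factorisation on a compact group, since the $q_j$ may have common zeros away from $e_G$; you need the collection to be strongly admissible, $\bigcap_j q_j^{-1}(0)=\{e_G\}$, after which one patches the local Hadamard factorisation near $e_G$ with the explicit coefficients $b_\mu=\overline{q^\mu}\,(1-\chi)p\,\bigl(\sum_{|\mu'|=\ell}|q^{\mu'}|^2\bigr)^{-1}$ away from $e_G$. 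This holds for the usual RT collections but must be said, and it also pins down that your identity $X^\beta\Delta^\alpha\Delta_q\sigma=\Delta_{q^\alpha q}X^\beta\sigma$ requires $\Delta^\alpha$ to be the RT-difference operators $\Delta_{q^\alpha}$ (one of the two options the paper allows), not the intrinsic ones. Second, your treatment of Part (1) via $\Delta_q=q(e_G)\,\id+\Delta_{q-q(e_G)}$ is, as you yourself concede, circular decoration: Part (2) rests on the no-loss fact, which is Part (1); so the honest structure is that Part (1) is imported from exactly the source the paper cites, and Part (2) is then derived. Third, the continuity in $q$ asserted in Part (2) does follow, but only because the Hadamard coefficients $b_\mu$ depend linearly and continuously (in $\cC^\infty$ seminorms) on $q$ and the imported fact is continuous in $b$; this dependence deserves a sentence rather than ``tracking the constants.'' With those points supplied, your route is a legitimate and more informative alternative to the paper's bare citation.
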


Secondly, we will use the following property of right translations:
\begin{lemma}
\label{lem_translationSm}
Let $m\in \bR$ and $1\geq \rho\geq \delta\geq 0$.
If $x_0\in G$, then for any $\sigma\in S_{\rho,\delta}^m(G)$, the symbol $R_{x_0}\sigma  = \{\sigma(xx_0,\pi) : (x,\pi)\in G\times \Ghat\}$ is in $S_{\rho,\delta}^m(G)$.
	Moreover, 
	 	the map
 $(x_0,\sigma) \mapsto R_{x_0} \sigma $ is  continuous $G\times S_{\rho,\delta}^m(G)\to S_{\rho,\delta}^{m} (G)$. 
\end{lemma}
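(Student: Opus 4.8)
The plan is to reduce the statement to two structural facts: that the left-invariant derivatives $X^\beta$ intertwine with the right translation $R_{x_0}$ up to an $\mathrm{Ad}(x_0^{-1})$-twist, while the difference operators $\Delta^\alpha$, which act only in the variable $\pi$, commute exactly with $R_{x_0}$. For the first, since $X_j$ is the left-invariant vector field with value $X_j\in\fg$ at $e_G$, the elementary identity $g\exp(tX_j)x_0=gx_0\exp\!\big(t\,\mathrm{Ad}(x_0^{-1})X_j\big)$ gives $X_j R_{x_0}=\sum_{k}[\mathrm{Ad}(x_0^{-1})]_{kj}\,R_{x_0}X_k$, and iterating over a multi-index yields $X^\beta R_{x_0}=\sum_{|\beta'|=|\beta|}c_{\beta,\beta'}(x_0)\,R_{x_0}X^{\beta'}$, where each $c_{\beta,\beta'}(x_0)$ is a polynomial in the matrix entries of $\mathrm{Ad}(x_0^{-1})$; in particular $x_0\mapsto c_{\beta,\beta'}(x_0)$ is smooth and, $G$ being compact, bounded. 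For the second, as $\Delta_q$ is defined on the kernel side by $\Delta_q\widehat\kappa=\cF(q\kappa)$ whereas $R_{x_0}$ merely relabels $x\mapsto xx_0$, the two act on independent variables, so $\Delta^\alpha R_{x_0}=R_{x_0}\Delta^\alpha$. Combining them gives the key identity
$$
X^\beta\Delta^\alpha(R_{x_0}\sigma)=\sum_{|\beta'|=|\beta|}c_{\beta,\beta'}(x_0)\,R_{x_0}\big(X^{\beta'}\Delta^\alpha\sigma\big).
$$

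Taking $\sL(\cH_\pi)$-norms, and using that the weight $(1+\lambda_\pi)$ is independent of $x$ together with $|\beta'|=|\beta|$, so that the exponent $\tfrac{m-\rho|\alpha|+\delta|\beta'|}2$ is exactly the one required, I obtain for all $a,b$ the estimate $\|R_{x_0}\sigma\|_{S_{\rho,\delta}^m,a,b}\leq C_{a,b}\,\|\sigma\|_{S_{\rho,\delta}^m,a,b}$, with $C_{a,b}=\max_{|\beta|\le b}\sum_{\beta'}\sup_{x_0\in G}|c_{\beta,\beta'}(x_0)|$ finite and independent of $x_0$. This proves simultaneously that $R_{x_0}\sigma\in S_{\rho,\delta}^m(G)$ and that the family $\{R_{x_0}\}_{x_0\in G}$ is equicontinuous on $S_{\rho,\delta}^m(G)$.

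For the joint continuity I would write $R_{x_0}\sigma-R_{x_0'}\sigma'=R_{x_0}(\sigma-\sigma')+(R_{x_0}\sigma'-R_{x_0'}\sigma')$; the first term is handled by the equicontinuity just proved, so it remains to show that $x_0\mapsto R_{x_0}\sigma$ is continuous for fixed $\sigma$. Writing $x_0=x_0'\exp\eta$ and applying the fundamental theorem of calculus along $t\mapsto x_0'\exp(t\eta)$ expresses $R_{x_0}\sigma-R_{x_0'}\sigma=\int_0^1 R_{x_0'\exp(t\eta)}(Y_\eta\sigma)\,dt$ with $Y_\eta=\sum_j\eta_jX_j$, giving a bound of size $O(|\eta|)$ in the relevant seminorm. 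This is where the only genuine difficulty lies: the extra left-invariant derivative $Y_\eta$ raises the order by $\delta$, so the fundamental-theorem bound controls the difference only in the seminorms of $S^{m+\delta}_{\rho,\delta}(G)$. For $\delta=0$ there is no loss and continuity into $S^m_{\rho,\delta}(G)$ follows at once; for $\delta>0$ one couples this (even Lipschitz) continuity into $S^{m+\delta}_{\rho,\delta}(G)$ with the $x_0$-uniform bound of the previous paragraph, which is the form of the statement actually needed when estimating the symbol-valued integrals defining the Wick kernel.
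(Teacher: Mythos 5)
Your first two paragraphs are correct, and in substance they \emph{are} the paper's proof, written in different coordinates. The paper passes to the seminorms $\|\sigma\|'_{S^m_{\rho,\delta},a,b}$ defined with right-invariant derivatives $\tilde X^\beta$ in place of the left-invariant $X^\beta$; since right-invariant vector fields commute with right translations and $x\mapsto xx_0$ is a bijection of $G$, one gets the exact identity $\|R_{x_0}\sigma\|'_{S^m_{\rho,\delta},a,b}=\|\sigma\|'_{S^m_{\rho,\delta},a,b}$, and compactness enters only once, to guarantee that the primed seminorms generate the same Fr\'echet topology. That equivalence of the two seminorm families is precisely the boundedness on $G$ of the matrix coefficients of ${\rm Ad}(x_0^{-1})$ which drives your identity $X^\beta R_{x_0}=\sum_{|\beta'|=|\beta|}c_{\beta,\beta'}(x_0)\,R_{x_0}X^{\beta'}$ (together with the commutation $\Delta^\alpha R_{x_0}=R_{x_0}\Delta^\alpha$, which the paper also uses tacitly). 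So the two arguments are equivalent; the paper's packaging is shorter, yours makes the uniform constant $C_{a,b}$ explicit.

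Concerning the ``Moreover'' part: your diagnosis of the loss of $\delta$ is accurate, and the loss is not a removable defect of your fundamental-theorem-of-calculus step. When $\delta>0$ the map $x_0\mapsto R_{x_0}\sigma$ is in general \emph{not} continuous into $S^m_{\rho,\delta}(G)$, so the joint continuity asserted in the lemma cannot be proved by any argument. Indeed, take $G=\bT$ (so $\lambda_k\sim k^2$), fix $0<\delta\leq \tfrac12$ and $\delta\leq\rho\leq 1-\delta$, and consider $\sigma(x,k)=\sin\left((1+k^2)^{\delta/2}\sin(2\pi x)\right)$: each $x$-derivative costs $(1+k^2)^{\delta/2}$ and each discrete difference in $k$ gains $(1+k^2)^{(\delta-1)/2}$, so $\sigma\in S^0_{\rho,\delta}(\bT)$; yet for every small $x_0>0$ one can choose $k$ with $(1+k^2)^{\delta/2}x_0$ of order $1$ and then $x$ so that $|\sigma(x+x_0,k)-\sigma(x,k)|\geq 1$, hence already the $(\alpha,\beta)=(0,0)$ seminorm of $R_{x_0}\sigma-\sigma$ does not tend to $0$. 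Note that the paper's own proof does not address continuity in $x_0$ at all: the isometry $\|R_{x_0}\sigma\|'=\|\sigma\|'$ delivers exactly what your first paragraph delivers, namely continuity of $\sigma\mapsto R_{x_0}\sigma$ with seminorm bounds uniform in $x_0$, and this equicontinuity is the only form in which the lemma is invoked later (e.g.\ to bound $R_{z^{-1}}\Delta_{q_z}\sigma$ uniformly in $z$ before integrating in $z$ in the proof of Lemma \ref{lem_OpW}). So your proposal establishes everything that is true and everything that is used downstream --- indeed slightly more than the paper's proof in the $x_0$-direction, namely Lipschitz continuity into $S^{m+\delta}_{\rho,\delta}(G)$ and genuine continuity when $\delta=0$ --- and the residual discrepancy is a flaw in the literal statement of the lemma, not a gap in your argument.
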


\begin{proof}
We consider the semi-norms 
$$
\|\sigma\|'_{S_{\rho,\delta}^m,a,b}
:=\max_{|\alpha|\leq a, |\beta|\leq b}
\sup_{(x,\pi)\in G\times \Ghat}
(1+\lambda_\pi)^{-\frac {m-\rho|\alpha|+\delta|\beta|}2}\| \tilde X^\beta \Delta^\alpha \sigma(x,\pi) \|_{\sL(\mathcal H_\pi)} ,
$$
  where we have used the right-invariant derivatives $\tilde X^\beta$ instead of the left-invariant ones $X^\beta$.
 As $G$ is compact, the semi-norms $\|\cdot\|'_{S_{\rho,\delta}^m,a,b}$ generate the topology of $S^m_{\rho,\delta}(G)$.
 We observe that $\|R_{x_0} \sigma\|'_{S_{\rho,\delta}^m,a,b} = \|\sigma\|'_{S_{\rho,\delta}^m,a,b}$.
 This implies the statement.
\end{proof}

Finally, we will need some properties of convolution in the $x$-variable of a symbol. They are summarised in the next statement, but first let us define what we mean by convolution of a symbol. If $\sigma\in S_{\rho,\delta}^m(G)$ and $\varphi\in \cC^\infty(G)$, then we denote by 
 $\sigma \,*\, \varphi $ the symbol
\begin{align*}
&\sigma\,*\, \varphi = \{\sigma\,*\, \varphi \,(x,\pi)\, : \, (x,\pi)\in G\times \Ghat\}, \\
\mbox{with}\qquad &
  \sigma \,*\, \varphi\,(x,\pi)  = \int_G  \sigma(z,\pi) \varphi( z^{-1}x)dz= \int_G  \sigma(x(z')^{-1},\pi) \varphi(z')dz', \;\; (x,\pi)\in G\times \Ghat.
\end{align*}

\begin{lemma}
\label{lem_convSm}
Let $m\in \bR$ and $1\geq \rho\geq \delta\geq 0$.
\begin{enumerate}
 \item If $\sigma\in S_{\rho,\delta}^m(G)$ and $\varphi\in \cC^\infty(G)$, then 
	 we have $\sigma \,*\, \varphi \in S_{\rho,\delta}^{m}(G)$ with for any semi-norm $\|\cdot\|_{S_{\rho,\delta}^m,a,b}  $
	 $$
	\|\sigma \,*\, \varphi  \|_{S_{\rho,\delta}^m,a,b} 
	\leq C \|\sigma\|_{S_{\rho,\delta}^m,a,0},
	$$
 where $C$ is a constant depending on $\varphi$ and $b$.
 This implies that $\sigma\mapsto \sigma*\varphi$ is continuous on  $S_{\rho,\delta}^m(G)$.
 \item 
 Furthermore, if $\int_G \varphi(y)dy =1$ then we have for any semi-norm $\|\cdot\|_{S_{\rho,\delta}^{m+\delta},a,b}  $
 $$
\|\sigma*\varphi  - \sigma\|
_{S^{m+\delta }_{\rho,\delta},a,b}
\leq 
C' \int_G |y^{-1}||\varphi(y)|dy \ 
\|\sigma\|_{S^{m}_{\rho,\delta},a,b+1},$$
 where $C'$ is a constant depending on $b$, and where $|z|$ denotes the Riemann distance of $z\in G$ to the neutral element $e_G$.
 (The invariant Riemannian distance is induced by our choice of scalar product on $\fg$.)
\end{enumerate}
\end{lemma}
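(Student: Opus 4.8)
The plan is to exploit the fact that the difference operators $\Delta^\alpha$ act only on the representation variable and therefore commute both with the $x$-convolution and with every derivative in $x$, while each $x$-derivative can be moved onto whichever factor of the convolution is convenient, depending on which of the two equivalent expressions for $\sigma*\varphi$ one uses. For Part (1) I would use the left-invariant fields $X^\beta$ together with the first expression $\sigma*\varphi(x,\pi)=\int_G\sigma(z,\pi)\varphi(z^{-1}x)\,dz$. Since the $x$-dependence sits entirely in $\varphi(z^{-1}x)$ and left-invariant fields differentiate on the right, one obtains $X^\beta\Delta^\alpha(\sigma*\varphi)=(\Delta^\alpha\sigma)*(X^\beta\varphi)$, that is $\int_G(\Delta^\alpha\sigma)(z,\pi)(X^\beta\varphi)(z^{-1}x)\,dz$. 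Using $\|\Delta^\alpha\sigma(z,\pi)\|_{\sL(\cH_\pi)}\le\|\sigma\|_{S_{\rho,\delta}^m,a,0}(1+\lambda_\pi)^{(m-\rho|\alpha|)/2}$ for $|\alpha|\le a$, pulling this weight out of the $z$-integral, and invoking $\int_G|(X^\beta\varphi)(z^{-1}x)|\,dz=\|X^\beta\varphi\|_{L^1(G)}$ (by unimodularity, independently of $x$), gives the bound with $C=\max_{|\beta|\le b}\|X^\beta\varphi\|_{L^1(G)}$; the inequality $(1+\lambda_\pi)^{(m-\rho|\alpha|)/2}\le(1+\lambda_\pi)^{(m-\rho|\alpha|+\delta|\beta|)/2}$, valid since $\delta|\beta|\ge 0$, absorbs the remaining power into the $S_{\rho,\delta}^m$-weight.

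For Part (2) I would instead use the right-invariant fields $\tilde X^\beta$, which is legitimate because, $G$ being compact, the primed semi-norms $\|\cdot\|'_{S_{\rho,\delta}^m,a,b}$ of Lemma~\ref{lem_translationSm} generate the same topology. Pairing these with the second expression $\sigma*\varphi(x,\pi)=\int_G\sigma(x(z')^{-1},\pi)\varphi(z')\,dz'$, for which right-invariant fields differentiate on the left and pass onto $\sigma$, and using $\int_G\varphi=1$, I would write $\sigma*\varphi-\sigma=\int_G[\sigma(x(z')^{-1},\pi)-\sigma(x,\pi)]\varphi(z')\,dz'$. Applying $\tilde X^\beta\Delta^\alpha$ and setting $\tau:=\Delta^\alpha\tilde X^\beta\sigma$, this becomes $\tilde X^\beta\Delta^\alpha(\sigma*\varphi-\sigma)(x,\pi)=\int_G[\tau(x(z')^{-1},\pi)-\tau(x,\pi)]\varphi(z')\,dz'$.

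The core is then a mean-value estimate for $\tau(x(z')^{-1},\pi)-\tau(x,\pi)$. Because the chosen inner product on $\fg$ is $\mathrm{Ad}$-invariant, the Riemannian metric is bi-invariant, so $\tilde X_1,\dots,\tilde X_n$ form an orthonormal frame and $d(x,x(z')^{-1})=|(z')^{-1}|$ by left-invariance. Joining $x$ to $x(z')^{-1}$ by a minimal geodesic $\gamma$ and expanding $\dot\gamma$ in the right-invariant frame, the fundamental theorem of calculus gives $\|\tau(x(z')^{-1},\pi)-\tau(x,\pi)\|_{\sL(\cH_\pi)}\le\sqrt n\,|(z')^{-1}|\,\sup_{y\in G}\max_j\|(\tilde X_j\tau)(y,\pi)\|_{\sL(\cH_\pi)}$. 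Now $\tilde X_j\tau=\Delta^\alpha\tilde X_j\tilde X^\beta\sigma$ is a symbol of the same type carrying one extra right-invariant derivative, so its defining (primed) estimate bounds it by $\|\sigma\|_{S_{\rho,\delta}^m,a,b+1}(1+\lambda_\pi)^{(m-\rho|\alpha|+\delta(|\beta|+1))/2}$; since this exponent equals $((m+\delta)-\rho|\alpha|+\delta|\beta|)/2$, it is exactly the $S_{\rho,\delta}^{m+\delta}$-weight at $(\alpha,\beta)$, which is the precise mechanism behind the loss of $\delta$ derivatives. Inserting this and integrating against $|\varphi(z')|\,dz'$ produces the factor $\int_G|(z')^{-1}||\varphi(z')|\,dz'$, and taking the maximum over $|\alpha|\le a$, $|\beta|\le b$ and the supremum over $(x,\pi)$ yields the stated inequality with $C'$ depending only on $b$ (and on $G$).

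I expect Part (2) to be the only genuine obstacle: one must make the noncommutative mean-value step rigorous and, crucially, organise the computation so that a single extra derivative controls the difference with a weight gaining exactly $(1+\lambda_\pi)^{\delta/2}$. Committing to the right-invariant fields throughout — both in the semi-norm, via Lemma~\ref{lem_translationSm}, and in the frame expansion of $\dot\gamma$, which is orthonormal thanks to bi-invariance of the metric — is what keeps $\tilde X_j\tau$ purely right-invariant of order $|\beta|+1$ and avoids any conversion between left- and right-invariant derivatives.
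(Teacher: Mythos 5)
Your proof is correct, and it reproduces the paper's Part (1) exactly: differentiate under the integral so that $X^\beta$ lands on $\varphi$, obtain $\Delta^\alpha\sigma * X^\beta\varphi$, and bound by $\|X^\beta\varphi\|_{L^1(G)}$ times the $(a,0)$-semi-norm, absorbing the weight since $\delta|\beta|\geq 0$. For Part (2) you use the same underlying mechanism as the paper --- a mean-value estimate trading $\sigma(xy^{-1},\pi)-\sigma(x,\pi)$ for $|y^{-1}|$ times one extra $x$-derivative, whose $\delta$-weight $(1+\lambda_\pi)^{\delta/2}$ is precisely what converts the $S^m$ bound into an $S^{m+\delta}$ bound --- but you organize the invariance bookkeeping differently. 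The paper keeps left-invariant derivatives and the unprimed semi-norms throughout: it applies $X^\beta_x$ to the composite $x\mapsto\sigma(xy^{-1},\pi)$ (these do \emph{not} pass through the right translation), invokes the Taylor estimate $|f(z)-f(0)|\lesssim_G |z|\max_j\sup_{z'}|X_jf(z')|$ in the $z$-variable, and only at the end converts the resulting mixed derivatives $\Delta^\alpha X_{j,z}X^\beta_x\sigma(xz,\pi)$ into pure left-invariant derivatives of order $|\beta|+1$. You instead commit to right-invariant fields, which do commute with right translations so $\tilde X^\beta$ passes directly onto $\sigma$, run an explicit geodesic mean-value argument (legitimate, since the $\mathrm{Ad}$-invariant inner product on $\fg$ makes the metric bi-invariant and the frame $\tilde X_1,\dots,\tilde X_n$ orthonormal), and transfer back via the primed semi-norms of Lemma~\ref{lem_translationSm}. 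The costs are symmetric: you avoid mixed derivatives, the paper avoids changing semi-norm families. The one point you should make explicit is that ``the primed semi-norms generate the same topology'' by itself only gives control of each unprimed semi-norm by \emph{some} primed one, with a priori different indices, whereas the statement prescribes the index $b+1$ on the right-hand side; this is easily repaired by noting that on a compact group $\tilde X^\beta=\sum_{|\beta'|\leq|\beta|}d_{\beta,\beta'}\,X^{\beta'}$ with smooth, hence bounded, coefficients, and the weights are monotone because $\delta\geq 0$, so the primed/unprimed equivalence preserves the indices $(a,b)$ and your constant $C'$ depends only on $b$ and $G$ as required.
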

\begin{proof} 
We observe that 
$$
\Delta^\alpha X^\beta (\sigma*\varphi)(x,\pi) 
= \Delta^\alpha \sigma*X^\beta \varphi \, (x,\pi), \quad\mbox{and}\quad
\|\sigma * \varphi(x,\pi)\|_{\sL(\cH_\pi)}\leq 
\|\varphi\|_{L^1(G)}\|\sigma\|_{L^
\infty (G\times \Ghat)}.
$$
This readily implies Part (1).

Assume $\int_G \varphi(y)dy =1$.
Part (2) will follow from 
the Taylor estimate:
$$
\forall f\in \cC^1(G),\qquad 
|f(z)-f(0)| \lesssim_G |z| \max_{j=1,\ldots,n}\sup_{z'\in G} | X_j f(z')|,
$$
Indeed, we may write:
$$
\Delta^\alpha X^\beta (\sigma*\varphi -\sigma)(x,\pi)
= \int_G (\Delta^\alpha X^\beta_x\sigma(x y^{-1},\pi) -\Delta^\alpha X^\beta\sigma(x,\pi)) \varphi (y)dy 
$$
so 
\begin{align*}
    \|\Delta^\alpha X^\beta (\sigma*\varphi -\sigma)(x,\pi)\|_{\cH_\pi}
&\leq  \int_G \| \Delta^\alpha X^\beta_x\sigma(x y^{-1},\pi) -\Delta^\alpha X^\beta\sigma(x,\pi))\|_{\cH_\pi}  
|\varphi (y)
| dy \\
&\lesssim_G  
\max_{j=1,\ldots,n}\sup_{z\in G}\|  \Delta^\alpha  X_{j,z} X^\beta_x\sigma(xz,\pi))\|_{\cH_\pi} \int_G |y^{-1}  |\
|\varphi (y)| dy.
\end{align*}
We conclude with 
$$
\max_{j=1,\ldots,n}\sup_{z\in G}\|  \Delta^\alpha  X_{j,z} X^\beta_x\sigma(xz,\pi))\|_{\cH_\pi}  
\lesssim_{G, |\beta|} \max_{|\beta'|=|\beta|+1}\sup_{x'\in G}\|  \Delta^\alpha  X^{\beta'}\sigma(x',\pi))\|_{\cH_\pi}  .
$$
\end{proof}

\subsection{Proof of the G\aa rding inequality}
\label{subsec_pfGarding_comp}
Here, we prove the following $(\rho,\delta)$-generalisation of Theorem \ref{thm_Gardingcomp}:
\begin{theorem}
\label{thm_Gardingcomp_rhodelta}
Let $G$ be a connected compact Lie group. 
Let $m\in \bR$ and $1\geq \rho>\delta\geq 0$.
	Assume that the symbol $\sigma\in S_{\rho,\delta}^{m}(G)$ satisfies the positivity condition $\sigma \geq 0$.
	Then, for all $\eta>0$,  there exists a constant $C_\eta >0$ such that 
	\begin{equation}\label{eq:w_g}
	\forall f\in \cC^\infty(G), \qquad 
	\Re \left (\Op^{\rm KN}(\sigma) f, f\right )_{L^2(G)} \geq  -\eta\,   \,  \| f\| _{H^{\frac{m+\delta}2}(G)}^2-C_\eta \|f\|_{H^{\frac{m-(\rho-\delta)}2}(G)}^2 .
\end{equation}	 
Moreover, if $\delta=0$ and  $\sigma\in S_{\rho,\delta}^{m}(G)$ satisfies the ellipticity condition $\sigma \geq 
	c_0 (\id  +\widehat \cL)^{m/2}$, 
that is,	
	$$
	\sigma(x,\pi) \geq c_0 (1+\lambda_\pi)^{\frac m2} \id_{\cH_\pi}, \qquad (x,\pi)\in G\times \Ghat,
	$$
	for some constant $c_0>0$.
	Then,   there exist  constants $c,C>0$ such that 
	$$
	\forall f\in \cC^\infty(G), \qquad 
	\Re \left (\Op^{\rm KN}(\sigma) f, f\right )_{L^2(G)} \geq  c\| f\|_{H^{\frac m2}(G)}^{2}-C \|f\|_{H^{\frac{m-\rho}2}(G)}^2 .
$$	 
\end{theorem}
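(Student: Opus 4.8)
The plan is to exploit the positivity of the Wick quantization together with the identity $\Op^{\rm Wick}(\sigma) = \Op^{\rm KN}(\sigma^{\rm Wick})$ from Lemma~\ref{lem_kW}. Concretely, I fix the generating function to be a family $a = a_t$ of $L^2$-normalised heat kernels (so $\|a_t\|_{L^2(G)} = 1$ and $a_t$ concentrates at $e_G$ as $t\to 0^+$), producing a $t$-dependent Wick quantization $\Op^{{\rm Wick},a_t}$. For $\sigma\geq 0$ and $f\in\cC^\infty(G)$ I would first record that $(\Op^{{\rm Wick},a_t}(\sigma)f,f)_{L^2(G)} = (\sigma\,\cB[f],\cB[f])_{L^2(G\times\Ghat)}\geq 0$, exactly as in \eqref{positivity}. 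Although $\sigma\in S^m_{\rho,\delta}(G)$ is unbounded in $\pi$ when $m>0$, this pairing still converges because $\cB[f](x,\cdot) = \cF(f\,a_t(x^{-1}\cdot))$ is the Fourier transform of a smooth function and hence decays faster than any power of $(1+\lambda_\pi)$, uniformly in $x$. The kernel computation of Lemma~\ref{lem_kW} then extends to give $(\Op^{{\rm Wick},a_t}(\sigma)f,f) = (\Op^{\rm KN}(\sigma^{{\rm Wick},t})f,f)$ on smooth $f$, so that writing $\Op^{\rm KN}(\sigma) = \Op^{{\rm Wick},a_t}(\sigma) + \Op^{\rm KN}(\sigma-\sigma^{{\rm Wick},t})$ reduces everything to a symbolic estimate on $\sigma-\sigma^{{\rm Wick},t}$.

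For that estimate I would work at the level of convolution kernels and split, setting $b_t(w) := \int_G a_t(z'w^{-1})\bar a_t(z')\,dz'$,
\begin{equation*}
\sigma - \sigma^{{\rm Wick},t} = \Delta_{1-b_t}\sigma + E_1^t ,
\end{equation*}
where $E_1^t$ is the symbol with convolution kernel $\int_G a_t(z'w^{-1})\bar a_t(z')\big(\kappa_x(w)-\kappa_{xz'^{-1}}(w)\big)\,dz'$ and $\kappa$ is the kernel of $\sigma$. The crucial point is that $b_t(e_G) = \|a_t\|_{L^2(G)}^2 = 1$, so $(1-b_t)(e_G) = 0$ and Lemma~\ref{lem_contDelta_q}(2) gives $\Delta_{1-b_t}\sigma \in S^{m-(\rho-\delta)}_{\rho,\delta}(G)$, with a seminorm controlled by the $\cD(G)$-seminorms of $1-b_t$ — these blow up as $t\to 0$ but are finite for each fixed $t$. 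The term $E_1^t$ involves only the difference of $\kappa$ in its $x$-argument along the right translation by $z'^{-1}$; expanding it by a first-order Taylor estimate in $x$ as in the proof of Lemma~\ref{lem_convSm}(2) and bounding the resulting difference operators via Lemma~\ref{lem_contDelta_q}(1) shows $E_1^t\in S^{m+\delta}_{\rho,\delta}(G)$ with seminorm bounded by $\big(\sup_w\int_G |z'|\,|a_t(z'w^{-1})|\,|a_t(z')|\,dz'\big)$ times a fixed seminorm of $\sigma$; this prefactor tends to $0$ as $t\to 0$ because $a_t$ concentrates at $e_G$.

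I would then turn the two symbol bounds into quadratic-form bounds through the Sobolev continuity of the calculus (Theorem~\ref{thm_calculus_comp}(1)): any symbol in $S^{m'}_{\rho,\delta}(G)$ yields $|(\Op^{\rm KN}(\cdot)f,f)_{L^2}|\lesssim\|f\|_{H^{m'/2}(G)}^2$ with constant proportional to a symbol seminorm, so $E_1^t$ contributes $\lesssim\epsilon(t)\,\|f\|_{H^{(m+\delta)/2}}^2$ and $\Delta_{1-b_t}\sigma$ contributes $\lesssim C(t)\,\|f\|_{H^{(m-(\rho-\delta))/2}}^2$. The passage from $(\Op^{\rm KN}(\sigma)f,f)$ to its real part costs only a further $\Psi^{m-(\rho-\delta)}_{\rho,\delta}(G)$ term by the adjoint statement Theorem~\ref{thm_calculus_comp}(3). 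Given $\eta>0$, choosing $t$ small enough that $\epsilon(t)\lesssim\eta$ and then setting $C_\eta := C(t)$ yields \eqref{eq:w_g}. The main obstacle is precisely this $t$-uniform kernel estimate for $E_1^t$: isolating a genuine gain in the spectral parameter while keeping the prefactor small, against the non-commutativity of the left-invariant derivatives with the right translations $z'^{-1}$ appearing in the Wick kernel.

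Finally, for the elliptic statement ($\delta=0$) I would split $\sigma = c_0(\id+\widehat\cL)^{m/2} + \tau$ with $\tau := \sigma - c_0(\id+\widehat\cL)^{m/2}\geq 0$ and $\tau\in S^m_{\rho,0}(G)$. The first summand is an $x$-independent positive Fourier multiplier with $(\Op^{\rm KN}(c_0(\id+\widehat\cL)^{m/2})f,f)_{L^2} = c_0\|(\id+\cL)^{m/4}f\|_{L^2(G)}^2$, which is comparable to $c_0\|f\|_{H^{m/2}(G)}^2$, while the already-proved inequality \eqref{eq:w_g} applied to $\tau$ (where $m+\delta = m$ because $\delta=0$) gives $\Re(\Op^{\rm KN}(\tau)f,f)\geq -\eta\|f\|_{H^{m/2}}^2 - C_\eta\|f\|_{H^{(m-\rho)/2}}^2$. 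Adding the two and choosing $\eta$ small relative to the elliptic constant produces the claimed bound; note that this absorption works only because $\delta=0$ forces the $\eta$-term to sit at the same order $m/2$ as the elliptic gain.
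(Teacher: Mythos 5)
Your proposal has the same architecture as the paper's proof (Wick positivity, comparison with $\Op^{\rm KN}$ through the kernel formula of Lemma~\ref{lem_kW}, a concentrating family $a_t$, difference-operator order gain, Taylor-in-$x$ smallness, Sobolev bounds from Theorem~\ref{thm_calculus_comp}), and the piece $\Delta_{1-b_t}\sigma$ is handled correctly: taking $q=-b_t$ in Lemma~\ref{lem_contDelta_q}(2) does give membership in $S^{m-(\rho-\delta)}_{\rho,\delta}(G)$ with constants that may blow up as $t\to0$, which is harmless. The genuine gap is the claimed estimate for $E_1^t$, which you flag as ``the main obstacle'' but then treat as established. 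The issue is structural: in your splitting, the weight $q_{z'}(w)=a_t(z'w^{-1})\bar a_t(z')$, which depends on the kernel variable $w$, sits inside the term that must be \emph{small} in $S^{m+\delta}_{\rho,\delta}(G)$ uniformly over all the seminorms needed for Sobolev boundedness. Estimating a seminorm of $E_1^t=\int_G\Delta_{q_{z'}}\bigl(\sigma-R_{z'^{-1}}\sigma\bigr)\,dz'$ forces you to apply $\Delta_{q_{z'}}$ to the Taylor-small symbol, and by Lemma~\ref{lem_contDelta_q}(1) this costs a factor $C(q_{z'})$ given by $\cD(G)$-seminorms of $q_{z'}$, i.e.\ by derivatives of $a_t$ (each of size $\sim t^{-1/2}$) together with weights attached to the nonzero spectral orders $m+\delta-\rho|\alpha|+\delta|\beta|$ occurring in the seminorms. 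These factors diverge as $t\to 0$ and can cancel or overwhelm the Taylor gain $|z'|\sim\sqrt t$. Your claimed prefactor $\sup_w\int_G|z'|\,|a_t(z'w^{-1})|\,|a_t(z')|\,dz'$ involves no derivatives of $a_t$ and no dependence on the symbol order, so it cannot control those seminorms. Indeed, writing $\Delta_{q_{z'}}=q_{z'}(e_G)+\Delta_{q_{z'}-q_{z'}(e_G)}$ gives $E_1^t=\bigl(\sigma-\sigma*|a_t|^2\bigr)+F^t$, where $F^t=\int_G\Delta_{q_{z'}-q_{z'}(e_G)}\bigl(\sigma-R_{z'^{-1}}\sigma\bigr)\,dz'$ lies in $S^{m-(\rho-\delta)}_{\rho,\delta}(G)$ with bounded but \emph{non-small} $t$-dependent norms: so $E_1^t$ genuinely contains a lower-order part that does not vanish as $t\to0$, and your smallness claim is false as stated. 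The repair is exactly the paper's regrouping: keep $\sigma-\sigma*|a_t|^2$ as the only smallness term (its weight $|a_t(z)|^2$ is independent of $w$, so the Taylor argument of Lemma~\ref{lem_convSm}(2)/Lemma~\ref{lem_control} never applies a difference operator to $a_t$), and push every $w$-dependent term $\Delta_{q_z-q_z(e_G)}$ into the order-gain remainder where $t$-dependent constants are absorbed into $C_\eta$; this is precisely Lemma~\ref{lem_OpW}. With that regrouping your argument closes and \eqref{eq:w_g} follows.

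A second, smaller gap: you run the whole argument at general $m$, but for $m>0$ the symbol is not in $L^\infty(G\times\Ghat)$, so $\Op^{\rm Wick}(\sigma)=\cB^*\sigma\cB$ and Lemma~\ref{lem_kW} are not available as stated. Your rapid-decay argument does make the pairing $(\sigma\cB[f],\cB[f])_{L^2(G\times\Ghat)}$ absolutely convergent and nonnegative, which is the right idea, but the identity with $(\Op^{\rm KN}(\sigma^{{\rm Wick},t})f,f)_{L^2(G)}$ for distributional convolution kernels is real additional work that ``then extends'' does not supply; moreover, the order-$m$ weights make the blow-up described above strictly worse. The paper avoids both problems by proving the case $m=0$ first, where all symbols are bounded and Lemma~\ref{lem_OpW} applies, and then reducing general $m$ to $m=0$ by conjugation with $(\id+\cL)^{-m/4}$ at the cost of one extra $\Psi^{m-(\rho-\delta)}_{\rho,\delta}(G)$ remainder; you should adopt the same reduction. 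Your elliptic step, on the other hand, is a correct alternative to the paper's: splitting $\sigma=c_0(\id+\widehat\cL)^{m/2}+\tau$ with $\tau\geq 0$ and using the exact multiplier identity $(\Op^{\rm KN}(c_0(\id+\widehat\cL)^{m/2})f,f)_{L^2(G)}=c_0\|(\id+\cL)^{m/4}f\|_{L^2(G)}^2\sim c_0\|f\|^2_{H^{m/2}(G)}$ works, whereas the paper instead keeps $c_0$ inside the Wick term after conjugation and chooses $\eta<c_0$ --- but either way this step is conditional on the first part being proved.
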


\begin{remark}\label{rem:pluto}
    \begin{enumerate}
\item When $\delta=0$ and $\rho=1$, the second part of Theorem~\ref{thm_Gardingcomp_rhodelta} is Theorem \ref{thm_Gardingcomp}. 

\item When $\delta\not=0$, the inequality~\eqref{eq:w_g} differs from the usual sharp G\aa rding inequality in which the term with coefficient $\eta$ does not appear. The inequality~\eqref{eq:w_g} differs from the  straightforward estimate 
\[ \Re \left (\Op^{\rm KN}(\sigma) f, f\right )_{L^2(G)} \geq  - C_1\| f\|_{H^{\frac m2}(G)}^{2}\]
by the fact that $\eta$ can be chosen as small as possible. This term shows the limit of our approach in the $(\rho,\delta)$-calculus when $\delta\not=0$.
\end{enumerate}
\end{remark}

\subsubsection{The main ingredients of the proof}

The main ingredients for our proof of Theorem \ref{thm_Gardingcomp_rhodelta}
are
firstly an analysis of the Wick quantization in the $\Psi^\infty_{\rho,\delta}$-calculus, and secondly the choice of $a$ in the Wick quantization.
\smallskip 

We observe that if $\sigma\in S_{\rho,\delta}^m(G)$ with $m\leq 0$, then 
 $\sigma\in L^\infty(G\times \Ghat)$ and we can consider $\Op^{\rm Wick}(\sigma)$ and study it in the   $\Psi^\infty_{\rho,\delta}$-calculus.
 
\begin{lemma}
\label{lem_OpW}
Here, we consider the Wick quantization $\Op^{{\rm Wick},a}$ with a smooth function 
$a:G\to \bC$. 
Let $m\in \bR$ and $1\geq \rho\geq \delta\geq 0$ with $\delta\neq 1$.
\begin{enumerate}
	\item If $\sigma\in S_{\rho,\delta}^m(G)$ with $m\leq 0$, then 
 $\sigma\in L^\infty(G\times \Ghat)$ and $\Op^{\rm Wick}(\sigma)\in \Psi_{\rho,\delta}^m(G)$. Moreover, the map $\sigma \mapsto \Op^{\rm Wick}(\sigma)$ is continuous $S_{\rho,\delta}^m(G)\to \Psi_{\rho,\delta}^m(G)$.

 \item  If $\sigma\in S_{\rho,\delta}^m(G)$ with $m\leq 0$, 
	 then we have
	$$
	\Op^{\rm Wick}(\sigma) - \Op^{\rm KN} (\sigma *|a|^2) \in \Psi_{\rho,\delta}^{m-(\rho-\delta)}(G).
	$$ 
Moreover, the map $\sigma \mapsto \Op^{\rm Wick}(\sigma) - \Op^{\rm KN} (\sigma *|a|^2)$ is continuous $S_{\rho,\delta}^m(G)\to \Psi_{\rho,\delta}^{m-(\rho-\delta)}(G)$.
\end{enumerate}
\end{lemma}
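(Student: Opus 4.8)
The plan is to reduce both statements to the symbol level, using Lemma~\ref{lem_kW} to realise $\Op^{\rm Wick}(\sigma)$ as a Kohn--Nirenberg operator and then to control the resulting symbol inside the calculus $S^m_{\rho,\delta}(G)$. First I note that for $m\le 0$ one has $\|\sigma(x,\pi)\|_{\sL(\cH_\pi)}\le C(1+\lambda_\pi)^{m/2}\le C$, so $\sigma\in L^\infty(G\times\Ghat)$ and $\Op^{\rm Wick}(\sigma)$ is defined. When $\sigma$ has an integrable convolution kernel, in particular when $\sigma\in S^{-\infty}(G)$, Lemma~\ref{lem_kW} gives $\Op^{\rm Wick}(\sigma)=\Op^{\rm KN}(\sigma^{\rm Wick})$ with $\sigma^{\rm Wick}=\cF\kappa^{\rm Wick}$, and the whole matter becomes the study of the map $\sigma\mapsto\sigma^{\rm Wick}$.

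The key computation is to rewrite $\sigma^{\rm Wick}$ as an $S^m$-valued integral of RT-difference operators applied to translates of $\sigma$. Starting from $\kappa^{\rm Wick}_x(w)=\int_G a(z'w^{-1})\bar a(z')\kappa_{xz'^{-1}}(w)\,dz'$ and taking the Fourier transform in $w$, the relation $\Delta_q\widehat\kappa=\cF(q\kappa)$ yields
\begin{equation*}
\sigma^{\rm Wick}(x,\pi)=\int_G \bar a(z')\,\Delta_{q_{z'}}\big[R_{z'^{-1}}\sigma\big](x,\pi)\,dz',\qquad q_{z'}(w):=a(z'w^{-1}),
\end{equation*}
where $R_{z'^{-1}}\sigma(x,\pi)=\sigma(xz'^{-1},\pi)$ and $\Delta_{q_{z'}}$ acts on the representation variable (the two operations commute since they act on different variables). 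Since $a$ is smooth and $G$ is compact, $z'\mapsto q_{z'}$ is a continuous, hence relatively compact, family in $\cD(G)$; Lemma~\ref{lem_contDelta_q}(1) then makes $\{\Delta_{q_{z'}}\}$ uniformly bounded on $S^m_{\rho,\delta}(G)$, while Lemma~\ref{lem_translationSm} makes $\{R_{z'^{-1}}\}$ uniformly bounded there. As $\bar a(z')\,dz'$ is a finite measure on the compact group $G$, the integral converges in $S^m_{\rho,\delta}(G)$ and $\sigma\mapsto\sigma^{\rm Wick}$ is continuous $S^m_{\rho,\delta}(G)\to S^m_{\rho,\delta}(G)$. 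Composing with the continuous map $\Op^{\rm KN}:S^m_{\rho,\delta}(G)\to\Psi^m_{\rho,\delta}(G)$ of Theorem~\ref{thm_calculus_comp} gives Part~(1).

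For Part~(2) I would exploit that $q_{z'}(e_G)=a(z')$. Writing the convolution as $\sigma*|a|^2(x,\pi)=\int_G |a(z')|^2\sigma(xz'^{-1},\pi)\,dz'=\int_G \bar a(z')\,q_{z'}(e_G)\,R_{z'^{-1}}\sigma(x,\pi)\,dz'$ and recalling that $\Delta_{q_{z'}(e_G)}$ acts as multiplication by the scalar $q_{z'}(e_G)$, subtraction and the linearity of $q\mapsto\Delta_q$ give
\begin{equation*}
\sigma^{\rm Wick}-\sigma*|a|^2=\int_G \bar a(z')\,\Delta_{q_{z'}-q_{z'}(e_G)}\big[R_{z'^{-1}}\sigma\big]\,dz'.
\end{equation*}
Each integrand now involves the difference operator associated with a function vanishing at $e_G$, so Lemma~\ref{lem_contDelta_q}(2) produces a gain of $\rho-\delta$, uniformly in $z'$ by the relative compactness of $\{q_{z'}\}$ in $\cD(G)$. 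Integrating yields $\sigma^{\rm Wick}-\sigma*|a|^2\in S^{m-(\rho-\delta)}_{\rho,\delta}(G)$ continuously, and applying $\Op^{\rm KN}$ gives Part~(2).

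The main obstacle is purely functional-analytic: Lemma~\ref{lem_kW} is stated for symbols in $\cC_b(G,\cF L^1(G))$, whereas a general $\sigma\in S^m_{\rho,\delta}(G)$ with $m$ close to $0$ (e.g. $\sigma=\id$) need not have an integrable kernel. I would therefore prove the operator identity $\Op^{\rm Wick}(\sigma)=\Op^{\rm KN}(\sigma^{\rm Wick})$ first for smooth-kernel symbols and then extend it to all $m\le 0$ by a Fourier truncation $\sigma_N\to\sigma$: on the Wick side $\cB^*(\sigma_N\cB f)\to\cB^*(\sigma\cB f)$ in $L^2(G)$ by dominated convergence on the Plancherel side (the $\sigma_N$ being uniformly bounded in $L^\infty(G\times\Ghat)$), while the already-established continuity of $\sigma\mapsto\sigma^{\rm Wick}$ controls the Kohn--Nirenberg side. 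Everything else—the convergence of the $z'$-integral in the Fr\'echet topology and the uniform-in-$z'$ estimates—is routine once the relative compactness of $\{q_{z'}\}$ in $\cD(G)$ is recorded.
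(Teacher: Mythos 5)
Your proof follows essentially the same route as the paper's: rewriting $\sigma^{\rm Wick}$ as $\int_G \bar a(z')\,\Delta_{q_{z'}}\big[R_{z'^{-1}}\sigma\big]\,dz'$, invoking Lemma~\ref{lem_contDelta_q}~(1) and Lemma~\ref{lem_translationSm} for Part~(1), and identifying $\sigma*|a|^2$ with the $q_{z'}(e_G)$-contribution so that Lemma~\ref{lem_contDelta_q}~(2) yields the gain of $\rho-\delta$ in Part~(2). Your additional care --- extending the identity of Lemma~\ref{lem_kW} beyond $\cC_b(G,\cF L^1(G))$ by an approximation argument, and recording the uniformity in $z'$ via compactness of $\{q_{z'}\}$ in $\cD(G)$ --- fills in details that the paper passes over silently, and is sound.
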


\begin{proof}[Proof of Lemma \ref{lem_OpW}]
We may rephrase Lemma \ref{lem_kW} as $\Op^{\rm Wick}(\sigma) = \Op^{\rm KN}(\sigma^{\rm Wick})$
with 
$$
\sigma^{\rm Wick} (x,\pi)= \int_G \Delta_{q_z} \sigma (xz^{-1},\pi) dz
$$
where $q_z(w) = a(zw^{-1})\bar a(z)$.
By Lemmata \ref{lem_translationSm} and \ref{lem_contDelta_q} (1), this implies Part 1.

We observe that 
$$
\int_G \Delta_{q_z(e_G)} \sigma (xz^{-1},\pi)\, dz
=\int_G  |a|^{2}(z) \, \sigma(xz^{-1},\pi)\, dz
= \sigma*|a|^2 (x,\pi).
$$
Hence, Lemma \ref{lem_contDelta_q} (2) implies Part 2.
\end{proof}

 The choice of the functions
 $a$ for the Wick quantization is at the core of our proof of the G\aa rding inequality. We do it  in relation to an approximation of the identity.
By an approximation of the identity on a compact Lie group $G$, we mean here a family of functions  $\varphi_t\in \cD(G)$, $t>0$, satisfying 
$\int_G \varphi_t(z)dz =1$ for any $t>0$ 
and for any neighbourhood $V$ of the neutral element $e_G$, $\lim_{t\to 0} \int_{z\notin V} |\varphi_t(z)| dz=0$ and $\sup_{t\in (0,1]} \int_{z\in V} |\varphi_t(z)| dz<\infty $.
We then have  
\begin{equation}
\label{eq_convphitLinfty}
  \forall \psi\in\cC(G,\bC),\qquad \lim_{t\rightarrow0}\max_{x\in G}|\psi(x)-\psi*\varphi_t (x)|=0.  
\end{equation}
The  properties regarding the approximation of the identity that we will use in our proof of G\aa rding inequalities below are  summarised in the following lemma:

\begin{lemma}
  \label{lem_control}
  Let
  $m\in \bR$ and $1\geq \rho\geq \delta\geq 0$.
  Let $\varphi_t$, $t>0$, be an approximation of the identity on the compact Lie group $G$ (as defined above). 
  We assume that it satisfies $\varphi_t(z)\geq 0$ for any $z\in G$ and $t\in (0,1)$. Then 
  for any $\sigma\in S^{m}_{\rho,\delta}(G)$, as $t\to 0$,
  $\sigma*\varphi_t$ converges to $\sigma$ in $S^{m+\delta}_{\rho,\delta}(G)$, that is, for any semi-norm $\|\cdot\|_{S^{m+\delta}_{\rho,\delta}(G),a_0,b_0}$ we have
$$
\lim_{t\to 0} \|\sigma -  \sigma*\varphi_t\|_{S^{m+\delta}_{\rho,\delta}(G),a_0,b_0}
=0.
$$
\end{lemma}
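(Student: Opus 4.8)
The plan is to reduce the claim to the two estimates provided by Lemma~\ref{lem_convSm}, applied with $\varphi=\varphi_t$, and then push $t\to 0$ using the defining properties of an approximation of the identity. Concretely, Lemma~\ref{lem_convSm}~(2) already tells us that for each fixed $t$,
$$
\|\sigma*\varphi_t-\sigma\|_{S^{m+\delta}_{\rho,\delta},a_0,b_0}
\leq C'\left(\int_G |y^{-1}|\,|\varphi_t(y)|\,dy\right)\|\sigma\|_{S^{m}_{\rho,\delta},a_0,b_0+1},
$$
since $\int_G\varphi_t=1$ and (by nonnegativity) $|\varphi_t|=\varphi_t$. So the entire problem collapses to showing that the single scalar factor $\int_G |y^{-1}|\,\varphi_t(y)\,dy$ tends to $0$ as $t\to 0$; the symbol seminorm $\|\sigma\|_{S^{m}_{\rho,\delta},a_0,b_0+1}$ is just a fixed finite constant. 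This is the clean structural observation that organises the whole proof.

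First I would record that, because the invariant Riemann distance satisfies $|y^{-1}|=|y|$ and $|e_G|=0$, the function $y\mapsto |y^{-1}|$ is continuous on $G$ and vanishes at the neutral element. The plan is then to split the integral $\int_G |y^{-1}|\,\varphi_t(y)\,dy$ over a small neighbourhood $V$ of $e_G$ and its complement. On the complement $G\setminus V$, the distance $|y^{-1}|$ is bounded by the diameter of the compact group $G$, so that part is controlled by $(\mathrm{diam}\,G)\int_{y\notin V}|\varphi_t(y)|\,dy$, which tends to $0$ by the very definition of an approximation of the identity. On the neighbourhood $V$, I would choose $V$ small enough that $|y^{-1}|<\varepsilon$ there (possible by continuity and vanishing at $e_G$), so that the contribution is at most $\varepsilon\int_{y\in V}|\varphi_t(y)|\,dy\leq \varepsilon\sup_{t\in(0,1]}\int_{y\in V}|\varphi_t(y)|\,dy$, a fixed multiple of $\varepsilon$ by the uniform-mass hypothesis. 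Combining the two pieces and letting first $t\to 0$ and then $\varepsilon\to 0$ shows $\int_G|y^{-1}|\,\varphi_t(y)\,dy\to 0$.

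Plugging this convergence back into the Lemma~\ref{lem_convSm}~(2) estimate immediately gives
$$
\lim_{t\to 0}\|\sigma*\varphi_t-\sigma\|_{S^{m+\delta}_{\rho,\delta},a_0,b_0}=0
$$
for every seminorm, which is exactly the assertion of the lemma.

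The main obstacle, such as it is, lies entirely in the nonnegativity hypothesis on $\varphi_t$: it is what lets me replace $|\varphi_t|$ by $\varphi_t$ and hence use the mass-one normalisation $\int_G\varphi_t=1$ directly, rather than only controlling $\int|\varphi_t|$. Without it, the $S^{m+\delta}$-convergence (as opposed to a weaker statement) would not follow so cleanly from Lemma~\ref{lem_convSm}. Everything else is a routine $\varepsilon/V$ argument packaging the three standard properties of the approximation of the identity, so I expect the proof to be short; the only care needed is to keep the roles of the \emph{gain} of $\delta$ derivatives (coming from the factor $|y^{-1}|$, which is small near $e_G$ exactly where $\varphi_t$ concentrates) transparent.
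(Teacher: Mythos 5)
Your proof is correct and follows the same structural reduction as the paper's: both apply Lemma \ref{lem_convSm}~(2) with $\varphi=\varphi_t$ to bound $\|\sigma*\varphi_t-\sigma\|_{S^{m+\delta}_{\rho,\delta},a_0,b_0}$ by a fixed seminorm of $\sigma$ times the scalar factor $\int_G |y^{-1}|\,|\varphi_t(y)|\,dy$, and then show that this factor tends to $0$ as $t\to 0$.

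The only divergence is in how that limit is established, and here your route is genuinely different in a small but meaningful way. The paper uses the nonnegativity of $\varphi_t$ to write $\int_G |y^{-1}|\,|\varphi_t(y)|\,dy=\int_G |y^{-1}|\,\varphi_t(y)\,dy=(|\cdot|*\varphi_t)(e_G)$ and then invokes the uniform convergence property \eqref{eq_convphitLinfty} of $\psi*\varphi_t\to\psi$ for continuous $\psi$, applied to $\psi=|\cdot|$, yielding the limit $|e_G|=0$. You instead re-derive the limit by hand with the $\varepsilon$/$V$ splitting, using only the two mass conditions in the definition of an approximation of the identity. Your argument is more elementary (it essentially re-proves the relevant special case of \eqref{eq_convphitLinfty}) and in fact slightly more general: since the definition of an approximation of the identity already controls $\int_{z\notin V}|\varphi_t|$ and $\sup_{t\in(0,1]}\int_{z\in V}|\varphi_t|$ \emph{with absolute values}, your splitting never uses $\varphi_t\geq 0$ at all. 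This makes your closing remark inaccurate as a description of your own proof: nonnegativity is not what your argument hinges on -- it is only needed for the paper's shortcut via \eqref{eq_convphitLinfty}, and, later in the paper, to define $a=\sqrt{\varphi_t}$ in the G\aa rding argument. What Lemma \ref{lem_convSm}~(2) actually requires is the normalisation $\int_G\varphi_t=1$, which holds by definition regardless of sign.
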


\begin{proof}[Proof of Lemma \ref{lem_control}]
We observe that for any $\varphi\in \cC^\infty(G)$, 
we have
$$
\|\sigma*\varphi  - \sigma\|
_{S^{m+\delta }_{\rho,\delta},a_0,b_0}
\lesssim_G \int_G |y^{-1}||\varphi(y)|dy\  
\|\sigma\|_{S^{m}_{\rho,\delta},a_0,b_0+1},$$
by Lemma \ref{lem_convSm} (2). 
By \eqref{eq_convphitLinfty} and because $\varphi_t\geq 0$, we have
$$
\lim_{t\to 0}\int_G |y^{-1}||\varphi_t(y)|dy
=\lim_{t\to 0}\int_G |y^{-1}|\varphi_t(y)dy
=\lim_{t\to 0}
(|\cdot|*\varphi_t )(0)
= |0|=0.
$$
Therefore $\lim_{t\to 0} \|\sigma -  \sigma*\varphi_t\|_{S^{m+\delta}_{\rho,\delta}(G),a_0,b_0}=0$.
\end{proof}

In the proof of the G\aa rding inequality for symbol of order 0 below, 
we will choose an approximation of the identity $\varphi_t$, $t>0$, that never vanishes, i.e.  $\varphi_t(x)>0$ for any $x\in G$ and $t>0$, and then take $a:=\sqrt{\varphi_t}$.
Such an approximation of the identity $\varphi_t$ is obtained by considering the heat kernel $p_t$ \cite{varo,FJFA2015}, that is, the convolution kernel of $e^{-t \cL}$.

\subsubsection{Proof of Theorem \ref{thm_Gardingcomp_rhodelta} }

We start by proving the result for $m=0$, then we extend the result to any $m\in\R$.

\begin{proof}[Proof of Theorem \ref{thm_Gardingcomp_rhodelta} for $m=0$] 
Let
 $\sigma\in S_{\rho,\delta}^0(G)$ 
satisfying $\sigma(x,\pi)=\sigma(x,\pi)^*\geq 0$ for any $(x,\pi)\in G\times \Ghat$.
The link between  the Wick and Kohn-Nirenberg quantizations (Lemma \ref{lem_OpW}) and the properties  of the pseudodifferential calculus (Theorem \ref{thm_calculus_comp}) imply
$$
\frac 12(\Op^{\rm KN}(\sigma)+\Op^{\rm KN}(\sigma)^*) 
= \Op^{\rm Wick} (\sigma) + \Op^{\rm KN} (\sigma - \sigma*|a|^2 ) + \Op^{\rm KN}(\tau),
$$
with $\tau\in S_{\rho,\delta}^{-(\rho-\delta)}(G)$. Hence, we obtain:
\begin{align}\label{dalton}
	\Re (\Op^{\rm KN}(\sigma) f, f)_{L^2(G)}
&\geq  (\Op^{\rm Wick}(\sigma) f, f)_{L^2(G)} 
-\left| \left(\Op^{\rm KN} (\sigma -  \sigma*|a|^2)  f,f \right)_{L^2(G)}\right|
\\
\nonumber &\qquad -  \|\Op^{\rm KN}(\tau)\|_{\sL(H^{-\frac {\rho-\delta}2}(G),H^{\frac {\rho-\delta}2}(G))}
\|f\|^2_{H^{-\frac {\rho-\delta}2}(G)}.
\end{align}
The property \eqref{positivity} of the Wick quantization and the hypothesis 
 $\sigma(x,\pi)\geq 0$ for any $(x,\pi) \in G\times \Ghat$ yield
$$
	(\Op^{\rm Wick}(\sigma) f, f)_{L^2(G)} 
	\geq 0.
$$
We then choose $a:=\sqrt{\varphi_t}$ with $\varphi_t(x)>0$ an approximation of the identity.
By Theorem \ref{thm_calculus_comp} (1), we have
\begin{align*}
    \left|\left(\Op^{\rm KN} (\sigma -  \sigma *|a|^2 )f,f\right)_{L^2(G)} \right|
    \leq
    \|\Op^{\rm KN} (\sigma -  \sigma *|a|^2 )\|_{\sL(H^{\frac\delta 2},H^{-\frac\delta 2})} \|f\|_{H^{\frac{\delta} 2}(G)}^{2}
    \leq \eta \|f\|_{H^{\frac{\delta} 2}(G)}^{2}
\end{align*}
by Lemma~\ref{lem_control} for some $t=t(\eta)$.
Finally, 
 the properties of the pseudodifferential calculus (Theorem~\ref{thm_calculus_comp}) imply that the operator norm 
$
\|\Op^{\rm KN}(\tau)\|_{\sL(H^{-\frac {\rho-\delta}2}(G)),H^{\frac {\rho-\delta}2}(G))}$
is finite. 
Therefore, collecting in these facts, we deduce from~\eqref{dalton}
\begin{equation*}
	\Re (\Op^{\rm KN}(\sigma) f, f)_{L^2(G)}
\geq  (\Op^{\rm Wick}(\sigma) f, f)_{L^2(G)} 
-\eta \| f\|^2 _{H^{\frac{\delta}{2}}(G)}
-C_\eta \| f\|^2 _{H^{-\frac {\rho-\delta}2}(G)},
\end{equation*}
for some constant $C_\eta=\|\Op^{\rm KN}(\tau)\|_{\sL(H^{-\frac {\rho-\delta}2}(G),H^{\frac {\rho-\delta}2}(G))}$ (note that $\tau$ depends on~$a=\sqrt{\varphi_t}$, or, equivalently on~$t$, and thus, on~$\eta$).
This concludes the first part of the Theorem, in the case $\sigma\in S^0_{\rho,\delta}(G)$.
\smallskip 

The second part of the Theorem, with $\delta=0$, is obtained in a similar manner by observing that the property \eqref{positivity} of the Wick quantization and the hypothesis 
 $\sigma(x,\pi)\geq c_0$ for any $(x,\pi) \in G\times \Ghat$ yield
$$
	(\Op^{\rm Wick}(\sigma) f, f)_{L^2(G)} 
	\geq 
	c_0 \|f\|^2_{L^2(G)}.
$$
One then choose $\eta < c_0$ so that the term $ \eta \|f\|_{H^{\frac \delta 2}(G)}^{2}= \eta \|f\|_{L^2(G)}^{2}$ is absorbed by $c_0 \|f\|^2_{L^2(G)}$. 
\end{proof}

\begin{proof}[Proof of Theorem \ref{thm_Gardingcomp_rhodelta} for any $m\in \bR$] 
Let $\sigma\in S_{\rho,\delta}^{m}(G)$ be such that $\sigma \geq 0$.
By the properties of the pseudodifferential calculus, we may write
$$
(\id+\cL)^{-m/4} \Op^{\rm KN}(\sigma) \,(\id+\cL)^{-m/4} =
\Op^{\rm KN}(\sigma_1)+
\Op^{\rm KN}(\tau_1),
$$
with $\sigma_1\in S_{\rho,\delta}^0(G)$ given by $\sigma_1(x,\pi):= (1+\lambda_\pi)^{-m/2} \sigma (x,\pi)$,  and 
$\tau_1\in S_{\rho,\delta}^{-(\rho-\delta)}(G)$.
We observe that $\sigma_1$ satisfies the hypothesis of Theorem \ref{thm_Gardingcomp_rhodelta} with $m=0$.
Therefore for
 any $f\in \cC^\infty(G)$, 
setting $f_1=(\id+\cL)^{m/4} f$, we have
\begin{align*}
\Re (\Op^{\rm KN}(\sigma) f, f)_{L^2(G)}
&=
\Re ((\id+\cL)^{-\frac{m}4} \Op^{\rm KN}(\sigma) \,(\id+\cL)^{-\frac{m}4}
 f_1, f_1)_{L^2(G)}	\\
 &= \Re (\Op^{\rm KN}(\sigma_1) f_1, f_1)_{L^2(G)}
 + \Re (\Op^{\rm KN}(\tau_1) f_1, f_1)_{L^2(G)}\\
 &\geq -\eta \| f_1\|_{ H^{\frac\delta 2}(G)}^{2} -C _\eta \|f_1\|_{H^{-\frac {\rho-\delta}2}(G)}^{2},
\end{align*}
by Theorem \ref{thm_Gardingcomp_rhodelta} with $m=0$ applied to $\sigma_1$ and the properties of the pseudodifferential calculus applied to $\tau_1$. 
The conclusion follows from 
$\|f_1\|_{H^{-\frac {\rho-\delta}2}(G)} \sim \|f\|_{H^{\frac{m -(\rho-\delta)}2}(G)}$.
\end{proof}

\section{G\r  arding inequality on graded nilpotent  Lie groups}
\label{sec_Gnilp}

Here, we prove the G\r  arding inequality on a graded nilpotent  Lie group $G$. 
Before this,  we recall some definitions and notation about this class of groups and the associated pseudodifferential calculus. We refer to~\cite{FR} for more details.

\subsection{Preliminaries on graded nilpotent groups}

A graded group $G$ is a connected simply connected nilpotent Lie group whose (finite dimensional, real) Lie algebra $\mathfrak g$ admits an $\N$-gradation into linear subspaces, 
$$
\mathfrak g = \oplus_{j=1}^\infty \mathfrak g_j 
\quad\mbox{with} \quad [\mathfrak g_i,\mathfrak g_j]\subseteq \mathfrak g_{i+j}, \;\; 1\leq i\leq j,
$$
where all but a finite number of subspaces $\mathfrak g_j $ are trivial.
We denote by $r=r_G$ the smallest integer~$j$ such that all the subspaces $\mathfrak g_j $, $j>r$,  are trivial.  
If the first stratum $\mathfrak g_1$ generates  the whole Lie algebra, then $\mathfrak g_{j+1}= [\mathfrak g_1,\mathfrak g_j]$ for all $j\in\N_0$  and $r$ is the step of the group; the group $G$ is then said to be stratified, and also  (after a choice of basis or inner product for $\mathfrak g_1$)  Carnot.

\subsubsection{The exponential map and functional spaces}
 
 The product law on $G$ is derived from the exponential map  $\exp_G : \mathfrak g \to G$ 
which  is a global diffeomorphism from $\mathfrak g$ onto $G$. Once a basis $X_1,\ldots,X_{n}$
for~$\mathfrak g$ has been chosen, we may  identify 
the points $(x_{1},\ldots,x_n)\in \mathbb R^n$ 
 with the points  $x=\exp(x_{1}X_1+\cdots+x_n X_n)$ in~$G$, $n={\rm dim} \,\mathfrak g$.
It allows us to define 
the (topological vector) spaces $\mathcal C^\infty(G)$, $\cD(G)$ and $\mathcal S(G)$  of smooth, continuous and compactly supported, and  Schwartz functions on $G$ identified with $\mathbb R^n$; 
note that the resulting spaces are intrinsically defined as spaces of functions on~$G$ and do not depend on a choice of basis. 

The exponential map induces a Haar measure $dx$ on $G$ which is invariant under left and right translations and  defines Lebesgue spaces on~$G$.

Finally, it is worth mentioning that in the present case of a graded group $G$, the dual set $\Ghat$ and the Plancherel measure $\mu$ can be explicitly described via
    Kirillov's orbit method~\cite{CG90}.

\subsubsection{Adapted basis and dilations}\label{sec:adap_basis}
We now construct a basis adapted to the gradation. 
Set $d_j={\rm dim} \, \mathfrak g_j $ for $1\leq j\leq r$.
We choose a basis $\{X_1,\ldots, X_{d_1}\}$ of $\mathfrak g_1$ (this basis is possibly reduced to $\emptyset$), then 
$\{X_{d_1+1},\ldots,  X_{d_1+d_2}\}$ a basis of $\mathfrak g_2$
(possibly $\{0\}$)
and so on.
Such a basis $\mathcal B=(X_1, \cdots , X_{d_1+\cdots +d_{r}})$
of $\mathfrak g$ is said to be adapted to the gradation; and we have $n=d_1+\cdots + d_r$.

\smallskip 

The Lie algebra 
 $\mathfrak g$ is  a homogeneous Lie algebra equipped with 
the family of dilations  $\{\delta_r, r>0\}$,  $\delta_r:\mathfrak g\to \mathfrak g$, defined by
$\delta_r X=r^\ell X$ for every $X\in \mathfrak g_\ell$, $\ell\in \N$
\cite{folland+stein_82,FR}.
We re-write the set of integers $\ell\in \N$ such that $\mathfrak g_\ell\not=\{0\}$
into the increasing sequence of positive integers
 $\upsilon_1,\ldots,\upsilon_n$ counted with multiplicity,
 the multiplicity of $\mathfrak g_\ell$ being its dimension.
 In this way, the integers $\upsilon_1,\ldots, \upsilon_n$ become 
 the weights of the dilations and we have $\delta_r X_j =r^{\upsilon_j} X_j$, $j=1,\ldots, n$,
 on the chosen basis of $\mathfrak g$.
 The associated group dilations are defined by
$$
\delta_r(x)=
rx
:=(r^{\upsilon_1} x_{1},r^{\upsilon_2}x_{2},\ldots,r^{\upsilon_n}x_{n}),
\quad x=(x_{1},\ldots,x_n)\in G, \ r>0.
$$
In a canonical way,  this leads to the notions of homogeneity for functions and operators. 
For instance, the Haar measure is homogeneous of degree
$$
Q:=\sum_\ell \upsilon_\ell \dim \fg_\ell,
$$
which is called the homogeneous dimension of the group. 
Another example is the vector field corresponding to an element $X\in \fg_\ell$: it  is $\ell$-homogeneous. 

An important class of homogeneous map are the homogeneneous quasi-norms, 
that is, a $1$-homogeneous non-negative map $G \ni x\mapsto |x|$ which is symmetric and definite in the sense that $|x^{-1}|=|x|$ and $|x|=0\Longleftrightarrow x=0$.
In fact, all the homogeneous quasi-norms are equivalent in the sense that if $|\cdot|_1$ and $|\cdot|_2$ are two of them, then 
$$
\exists C>0,\qquad \forall x\in G,
\qquad C^{-1} |x|_1 \leq |x|_2 \leq C |x|_1.
$$
Examples may be constructed easily, such as  
$$
|x| = (\sum_{j=1}^n |x_j|^{N/\upsilon_j})^{ 1/N} \ \mbox{for any}\ N>0,
$$
 with the convention above.

In the rest of the paper, 
we assume that we have fixed a basis $X_1,\ldots, X_n$ of $\fg$ adapted to the gradation.
We keep the same notation for the associated left-invariant vector fields on $G$, 
and we denote the corresponding right invariant vector fields by $\tilde X_1, \ldots,\tilde X_n$. For a multi-index $\alpha=(\alpha_1,\ldots,\alpha_n)\in \bN_0^n$, we set $X^\alpha=X_1^{\alpha_1}\ldots X_n^{\alpha_n}$
and $\tilde X^\alpha=\tilde X_1^{\alpha_1}\ldots \tilde  X_n^{\alpha_n}$.
The differential operators $X^\alpha$ and $\tilde X^\alpha$ are homogeneous of degree 
$$
[\alpha] = \upsilon_1\alpha_1+\ldots+\upsilon_n \alpha_n.
$$
Left and right vector fields and translations have many relations.
For instance,  for any function $f\in \cC^\infty(G)$ and $x,x_0\in G$, we have
$X_{j,x} f(xx_0) = \tilde X_{j,x_0}f(xx_0)$. Since 
the left and right differential operators are related by polynomial relations (see \cite[Corollary 3.1.30]{FR}), this implies 
\begin{equation}
\label{eq_Xalphafxx0}
  X_x^\alpha f(xx_0)
=
\tilde X_{x_0}^\alpha f(xx_0)
=
\sum_{[\beta]=[\alpha]}P_{\alpha,\beta}(x_0) X^\beta f(xx_0),  
\end{equation}
where the $P_{\alpha,\beta}$'s are $[\beta]-[\alpha]$-homogeneous polynomials.

For $s\in \bN_0$ a common multiple of the dilation's weights $\upsilon_1,\ldots,\upsilon_n$, the Sobolev spaces $L^2_s(G)$ is defined as the set of functions $f\in L^2(G)$ for which
\[
\| f\|_{L^2_s(G)} = \sup_{[\alpha]=s} \| X^\alpha f\|_{L^2(G)}<+\infty.
\]
For other values of $s>0$, they are obtained by interpolation, and by duality for $s<0$. These are well defined Banach spaces, \cite[Section 4.4]{FR}.

\subsubsection{Approximation of identity}
Below, 
we will use approximations of the identity built using dilations in the following sense:
\begin{lemma} \label{joe}\cite{folland+stein_82,FR}
    Let $\varphi_1\in \cS(G)$ with $\int_G \varphi_1=1$.
Consider the family of integrable functions 
$\varphi_t  = t^{-Q}\varphi_1\circ \delta_{t^{-1}}$, $t>0$.
	 The family of functions 
$\varphi_t$, $t>0$,  form an approximation of identity on $L^p(G)$, $p\in [1,\infty)$ and on the space $\cC_0(G)$ of continuous functions vanishing at infinity in the sense that
$$
\lim_{t\to 0} \|\psi-\psi*\varphi_t \|_{L^p(G)}=0,
$$ 
for $\psi\in L^p(G)$ (resp.  $\cC_0(G)$) for $p\in [1,\infty)$ (resp. $p=\infty$).
\end{lemma}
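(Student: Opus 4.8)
The plan is to reduce everything to the strong continuity of translations in $L^p(G)$ (for $p<\infty$) and in $\cC_0(G)$ (for $p=\infty$), following the classical scheme for dilated approximate identities. First I would record two elementary consequences of the homogeneity of the Haar measure. Performing the change of variables $z=\delta_t w$, for which $dz=t^Q\,dw$ because $dx$ is homogeneous of degree $Q$, and using $\varphi_t(\delta_t w)=t^{-Q}\varphi_1(w)$, one gets
$$
\int_G \varphi_t(z)\,dz=\int_G \varphi_1(w)\,dw=1,\qquad
\int_G |\varphi_t(z)|\,dz=\|\varphi_1\|_{L^1(G)},
$$
so the total mass is normalised and the $L^1$-norms are uniformly bounded in $t\in(0,1]$.

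Next, using the definition \eqref{def:convolution} of the convolution together with the substitution $z=y^{-1}x$ (licit since $G$ is unimodular), I would rewrite $(\psi*\varphi_t)(x)=\int_G \psi(xz^{-1})\varphi_t(z)\,dz$, and subtract it from $\psi(x)=\psi(x)\int_G\varphi_t(z)\,dz$ to obtain
$$
\psi(x)-(\psi*\varphi_t)(x)=\int_G\bigl(\psi(x)-\psi(xz^{-1})\bigr)\varphi_t(z)\,dz.
$$
Writing $T_z\psi$ for the function $x\mapsto\psi(xz^{-1})$ and applying Minkowski's integral inequality (for $p<\infty$), respectively the triangle inequality for the supremum (for $p=\infty$), this yields the key bound
$$
\|\psi-\psi*\varphi_t\|_{L^p(G)}\le\int_G\|\psi-T_z\psi\|_{L^p(G)}\,|\varphi_t(z)|\,dz.
$$

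The last step is to undo the dilation on the right-hand side. Setting $\omega(z):=\|\psi-T_z\psi\|_{L^p(G)}$ and changing variables $z=\delta_t w$ exactly as above gives
$$
\|\psi-\psi*\varphi_t\|_{L^p(G)}\le\int_G\omega(\delta_t w)\,|\varphi_1(w)|\,dw.
$$
Here I would invoke that translation is strongly continuous, i.e. $\omega(z)\to0$ as $z\to e_G$, together with the uniform bound $\omega(z)\le2\|\psi\|_{L^p(G)}$ coming from the translation-invariance of $dx$. Since $|\delta_t w|=t|w|\to0$ for any homogeneous quasi-norm, we have $\delta_t w\to e_G$ for every fixed $w$ as $t\to0$, so the integrand tends to $0$ pointwise and is dominated by $2\|\psi\|_{L^p(G)}|\varphi_1(w)|\in L^1(G)$ thanks to $\varphi_1\in\cS(G)\subset L^1(G)$. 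Dominated convergence then gives the claim.

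The hard part is the strong continuity of translation, which is also where the restriction to $\cC_0(G)$ (rather than all of $L^\infty(G)$) in the case $p=\infty$ becomes essential: for $p<\infty$ it follows from the density of $\cD(G)$ in $L^p(G)$ and the uniform continuity of compactly supported functions, while for $p=\infty$ it is precisely the (right) uniform continuity of elements of $\cC_0(G)$, a property that fails on general $L^\infty$ functions. Everything else is bookkeeping with the homogeneity of degree $Q$ of $dx$ and the Schwartz decay of $\varphi_1$, the latter guaranteeing both the uniform $L^1$-bound and the domination needed to pass to the limit.
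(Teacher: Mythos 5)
Your proof is correct and follows essentially the same route as the argument in the references the paper cites for this lemma (Folland--Stein and Fischer--Ruzhansky), namely: homogeneity of the Haar measure to normalise the dilates, Minkowski's integral inequality to reduce to the modulus of continuity $\|\psi-T_z\psi\|_{L^p}$, strong continuity of (right) translations on $L^p$ for $p<\infty$ and uniform continuity on $\cC_0(G)$, and dominated convergence after undoing the dilation. The paper itself gives no proof beyond this citation, and your write-up supplies exactly the standard argument, with the unimodularity and degree-$Q$ homogeneity used in the right places.
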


This is  more convenient than  considering only heat kernels $p_t$ of a positive Rockland operators~$\cR$, i.e. 
the convolution kernels of $e^{-t\cR}$, $t>0$, although the latter do provide approximations of the identity \cite{FR,folland+stein_82}. 
When $G$ is stratified and $\cR$ is a sub-Laplacian, the heat kernels will be non-negative and never vanishing.  However, these properties of the heat kernel for a general positive Rockland operator are not guaranteed in the graded case, and we observe that the heat kernel being positive and never vanishing was used in the proof of the compact case in Section \ref{subsec_pfGarding_comp}.
Furthermore, in our proof in the nilpotent case below, considering approximations of the identity built using dilations is, in fact, more practical.

\smallskip

We discuss in the next section the pseudodifferential calculus on nilpotent graded groups and its properties.

\subsection{The pseudodifferential calculus}
\label{subsec_pseudoC_graded}

In this section, we set some notation and recall briefly  the global symbol classes defined on $G$ together with some properties of the pseudodifferential calculus. 
We refer to 
\cite{FR} for more details.

\subsubsection{The symbol classes and the calculus}

A symbol $\sigma$ is in $S_{\rho,\delta}^m(G)$
when  for any multi-indices  $\alpha,\beta\in \bN_0^n$ and $\gamma\in \bR$, there exists $C=C(\alpha,\beta)$ such that we have for almost $(x,\pi)\in G\times \Ghat$,
\begin{equation}
	\label{eq_SmnormGnilp}
	\|\pi(\id+\cR)^{-\frac {m-\rho[\alpha] + \delta[\beta]+\gamma}{\nu}} X^\beta \Delta^\alpha \sigma(x,\pi) \pi(\id+\cR)^{\frac {\gamma}{\nu}}\|_{\sL(\mathcal H_\pi)} \leq C_{\alpha,\beta,\gamma},
\end{equation}
where $\cR$ is a (and then any) positive Rockland operator of homogeneous degree $\nu$; 
 we may assume $\gamma\in \bZ$.  

 In~\eqref{eq_SmnormGnilp}, the difference operator $\Delta^\alpha$ is the difference operator $\Delta_{x^\alpha}$ for the monomial $x^\alpha$  in the coordinates $x_j$. 
Generalising the definition in the compact setting,  the difference operator $\Delta_q$ associated to $q\in \cC^\infty(G)$ is defined via
$\Delta_q \widehat \kappa = \cF (q\kappa)$ for any $\kappa\in \cS'(G)$ for which $\kappa$ and $q\kappa$ admits a Fourier transform (see \cite{FR}).

We set 
\[
\|\sigma\|_{S^m_{\rho,\delta},a,b,c} := \max_{|\alpha|\leq a, |\beta|\leq b, |\gamma|\leq c} C_{\alpha,\beta,\gamma}
\]
for the best constants $C_{\alpha,\beta,\gamma}$ in \eqref{eq_SmnormGnilp} and $a,b,c\in \bN_0$. If $(\rho,\delta)=(1,0)$, we simply write 
$S^m(G)=S^m_{1,0}(G)$.
\smallskip 

The following theorem summarises the main properties of the classes of operators obtained by the $\Op^{\rm KN}$-quantization of the classes $S^m_{\rho,\delta}(G)$;
the  Sobolev spaces $L^2_s(G)$ adapted to the graded nilpotent Lie group $G$ were studied in \cite{FR,FRSob} generalising slightly the stratified case \cite{folland75}.

\begin{theorem}
\label{thm_calculus_Gnilp}
Theorem \ref{thm_calculus_comp} holds for $G$ a graded nilpotent Lie group when replacing the symbol classes with the ones defined  above and the Sobolev spaces with $L^2_s(G)$.  
  \end{theorem}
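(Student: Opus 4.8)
The plan is to observe that the statement is a transcription, item by item, of the assertions of Theorem~\ref{thm_calculus_comp} into the graded setting, and that each of them is already established in the monograph~\cite{FR}; I would proceed property by property.

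First, the Fréchet structure. The seminorms $\|\cdot\|_{S^m_{\rho,\delta},a,b,c}$ indexed by $(a,b,c)\in\bN_0^3$ form a countable separating family on $S^m_{\rho,\delta}(G)$, and completeness is inherited from $L^\infty(G\times\Ghat)$ since the defining estimates~\eqref{eq_SmnormGnilp} are stable under passage to the limit. As $\Op^{\rm KN}$ is injective (by injectivity of the group Fourier transform), one transports this structure to $\Psi^m_{\rho,\delta}(G)=\Op^{\rm KN}(S^m_{\rho,\delta}(G))$. The continuous inclusions $S^m_{\rho,\delta}(G)\subset S^{m'}_{\rho,\delta}(G)$ for $m\le m'$ are immediate from the boundedness of $\pi(\id+\cR)^{(m-m')/\nu}$.

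Next, the $L^2_s$-continuity, the composition law, and the adjoint property are precisely the symbolic-calculus theorems of~\cite{FR}: Sobolev boundedness, the asymptotic expansion of the symbol of a composition with a remainder lowered by $(\rho-\delta)$, and the analogous expansion for the adjoint. I would cite these directly, recording along the way the continuity of the associated maps in the seminorms, which is contained in the quantitative form of the estimates there; the property of being a pseudodifferential calculus in the sense of Definition~\ref{def_pseudo-diff_calculus} (for $\delta\neq 1$) then follows formally.

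The genuine point where the graded case departs from the compact one — and hence the main obstacle — is that $\pi(\cR)$ is unbounded and no longer scalar, so the weights $\pi(\id+\cR)^{s/\nu}$ do not commute with symbol derivatives or with products of symbols. This is exactly the reason for the extra shift $\gamma$ in the defining seminorm~\eqref{eq_SmnormGnilp}, and controlling the resulting commutators is where the real work lies. It rests on the kernel estimates and the functional calculus of positive Rockland operators developed in~\cite{FR}: one represents $X^\beta\Delta^\alpha\sigma$ through its convolution kernel, uses homogeneity together with Rockland heat-kernel bounds to estimate the weighted operator norms, and derives the composition and adjoint expansions in this non-commutative weighted framework. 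Once these tools are available, the $(\rho,\delta)$ book-keeping is identical to that of the compact case.
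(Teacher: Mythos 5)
Your proposal matches the paper's treatment: the paper gives no independent proof of Theorem~\ref{thm_calculus_Gnilp}, recording it instead as a summary of results already established in the monograph~\cite{FR} (Sobolev boundedness on $L^2_s(G)$, composition and adjoint expansions with a $(\rho-\delta)$-gain), which is exactly the route you take. Your added remarks on the Fr\'echet structure, the continuous inclusions, and the role of the shift $\gamma$ in the seminorms~\eqref{eq_SmnormGnilp} as the point where the graded case genuinely departs from the compact one are accurate and consistent with that source.
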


Any $\sigma\in S^m_{\rho,\delta}(G)$ admits a distributional  convolution kernel $\kappa: z\mapsto \kappa_x(z) \in \cC^\infty(G,\cS'(G))$, i.e. $\sigma(x,\pi) = \widehat \kappa_x(\pi)$ and 
$\Op^{\rm KN}(\sigma)f(x) =f*\kappa_x(x)$, $f\in \cS(G)$. 

\medskip 

In the next subsections, we  discuss the properties of the pseudodifferential calculus with respect to the application of a difference operator $\Delta_q$ for $q\in\mathcal S(G)$ and  with the convolution by a Schwartz function. Those properties were the main ingredients of the proof of G\aa rding inequality in the case of compact groups.

\subsubsection{Stability of the symbol classes with respect to difference operators}

The statement of Lemma~\ref{lem_contDelta_q} holds in the context of graded Lie groups if $a\in \cD(G)$ is replaced with $a\in \cS(G)$.

\begin{proof}[Sketch of the proof for Lemma \ref{lem_contDelta_q} for graded $G$ and $a$ Schwartz]
The properties of the  convolution kernels of symbols in $S^m_{\rho,\delta}(G)$, for instance being Schwartz away from the origin, implies that we may assume $q\in \cD(G)$ with compact support near the origin. Let $\chi\in \cD(G)$ be such that $\chi\equiv 1$ on the support of $q$.  
Let $P^q_N(z)$ be the Taylor polynomial at order $N$ in the sense of Folland-Stein \cite{folland+stein_82,FR}
for $q$.
The estimates of  the convolution kernel  $\kappa$ of $\sigma$ imply that
$$
 \sup_{x,y\in G}|\chi(q - P^q_N) \kappa_x|(y)
$$
 will be finite for $N$ large enough, with a constant given by semi-norms in $\sigma$, 
 and similarly for  $X_y^{\beta'_2}\tilde X_y^{\beta'_2}
  y^\alpha \chi(q - P^q_N) X^\beta_x\kappa_x $.
  This implies the statement. 
 \end{proof}

 \subsubsection{Stability of the symbol classes with respect to convolution}

 The analysis of the convolution of a symbol requires first to consider 
the properties  relatively with right-translation. The situation is  more involved because the group~$G$ is not compact. The analogue of Lemma~\ref{lem_translationSm} is the following.

\begin{lemma}
\label{lem_translationSm_graded}
Let $m\in \bR$ and $1\geq \rho\geq \delta\geq 0$.
 If $x_0\in G$, then for any $\sigma\in S_{\rho,\delta}^m(G)$, the symbol $R_{x_0}\sigma  = \{\sigma(xx_0,\pi) : (x,\pi)\in G\times \Ghat\}$ is in $S_{\rho,\delta}^m(G)$ and the map $(x_0,\sigma)\mapsto R_{x_0}\sigma$ is continuous  $G\times S_{\rho,\delta}^m(G)\to S_{\rho,\delta}^m(G)$.
Moreover, if we fix a homogeneous quasi-norm $|\cdot|$ on $G$, then for any semi-norm $\|\cdot\|_{S^m_{\rho,\delta}, a,b,c}$, there exists $N\in \N$ and $C>0$  such that
$$
\forall x_0\in G, \ 
\forall \sigma\in S^m_{\rho,\delta}(G),\qquad
 \| R_{x_0}\sigma \|_{S^{m}_{\rho,\delta},a,b,c,}\leq C(1+|x_0|)^N \| \sigma \| _{S^m_{\rho,\delta},a,b,c}.
$$
\end{lemma}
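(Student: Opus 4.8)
The plan is to bound each seminorm $\|R_{x_0}\sigma\|_{S^m_{\rho,\delta},a,b,c}$ directly, in the spirit of the compact case (Lemma~\ref{lem_translationSm}) but keeping track of the growth forced by the non-compactness of $G$. The first, purely formal, observation is that the difference operators act on the kernel variable while $R_{x_0}$ acts on the base point: writing $\sigma(x,\pi)=\widehat{\kappa_x}(\pi)$, we have $R_{x_0}\sigma(x,\pi)=\widehat{\kappa_{xx_0}}(\pi)$ and $\Delta^\alpha\sigma(x,\pi)=\cF(z\mapsto z^\alpha\kappa_x(z))(\pi)$, so that $\Delta^\alpha$ and $R_{x_0}$ commute, $\Delta^\alpha R_{x_0}\sigma=R_{x_0}\Delta^\alpha\sigma$. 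Thus everything reduces to understanding how the left-invariant derivatives $X^\beta$ interact with the right translation.

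For this I would use the relation~\eqref{eq_Xalphafxx0}: applying it to the function $y\mapsto\Delta^\alpha\sigma(y,\pi)$ and evaluating at $y=xx_0$ yields
\begin{equation*}
X^\beta_x\big(\Delta^\alpha R_{x_0}\sigma\big)(x,\pi)=\sum_{\beta'}P_{\beta,\beta'}(x_0)\,R_{x_0}\big(X^{\beta'}\Delta^\alpha\sigma\big)(x,\pi),
\end{equation*}
where each $P_{\beta,\beta'}$ is homogeneous of degree $[\beta']-[\beta]\geq 0$. Since a $d$-homogeneous polynomial obeys $|P(x_0)|\lesssim|x_0|^{d}\leq(1+|x_0|)^{d}$, the coefficients $P_{\beta,\beta'}(x_0)$ produce exactly the factor $(1+|x_0|)^N$, with $N$ the largest degree $[\beta']-[\beta]$ entering for $|\beta|\leq b$. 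This is precisely the feature that is invisible in the compact case, where these polynomials are bounded uniformly and $R_{x_0}$ preserves the (right-invariant) seminorms isometrically.

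It remains to bound, for each occurring $\beta'$,
\begin{equation*}
\big\|\pi(\id+\cR)^{-\frac{m-\rho[\alpha]+\delta[\beta]+\gamma}{\nu}}\,R_{x_0}\big(X^{\beta'}\Delta^\alpha\sigma\big)(x,\pi)\,\pi(\id+\cR)^{\frac{\gamma}{\nu}}\big\|_{\sL(\cH_\pi)}
\end{equation*}
by a seminorm of $\sigma$, uniformly in $(x,\pi)$; as $R_{x_0}$ merely evaluates the base point at $xx_0$, the supremum over $(x,\pi)$ turns this into the same quantity for $\sigma$. For the diagonal indices $[\beta']=[\beta]$ the exponent is exactly the one controlled by $\|\sigma\|_{S^m_{\rho,\delta},a,b,c}$, so these terms are immediate. \textbf{The main obstacle is the off-diagonal terms} $[\beta']>[\beta]$, which genuinely occur once $G$ is non-abelian (already $\tilde X=X+y\,Z$ on the Heisenberg group): the natural order of $X^{\beta'}\Delta^\alpha\sigma$ now exceeds the target weight by $\delta([\beta']-[\beta])/\nu$, so for $\delta>0$ one cannot simply quote the $(\alpha,\beta',\gamma)$-seminorm of $\sigma$. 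The plan is to absorb this extra homogeneity using the stability of $S^m_{\rho,\delta}(G)$ under conjugation by powers of $\pi(\id+\cR)$ — which is exactly the purpose of the free parameter $\gamma$ in~\eqref{eq_SmnormGnilp} — combined with the matching homogeneity degree $[\beta']-[\beta]$ of $P_{\beta,\beta'}$; I expect this reconciliation to be the delicate part of the argument. Note that when $\delta=0$ the weight does not see $[\beta]$ at all, the diagonal and off-diagonal exponents coincide, and the difficulty disappears.

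Finally, the asserted continuity of $(x_0,\sigma)\mapsto R_{x_0}\sigma$ is a soft consequence: continuity in $\sigma$ is immediate from linearity and the estimate, and continuity in $x_0$ follows from the continuity of the coefficients $P_{\beta,\beta'}$ and of $x_0\mapsto R_{x_0}(X^{\beta'}\Delta^\alpha\sigma)$ in the relevant seminorms, bearing in mind $R_{e}\sigma=\sigma$.
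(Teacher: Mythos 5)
Your route is exactly the one the paper intends: the paper's entire proof of this lemma is the remark that it ``follows the lines of the proof of Lemma~\ref{lem_translationSm} using \eqref{eq_Xalphafxx0}'', and your write-up (commuting $\Delta^\alpha$ past $R_{x_0}$, expanding $X^\beta_x$ through \eqref{eq_Xalphafxx0}, bounding the $([\beta']-[\beta])$-homogeneous coefficients $P_{\beta,\beta'}(x_0)$ by $(1+|x_0|)^{N}$, and quoting seminorms of $\sigma$ for the terms with $[\beta']=[\beta]$) is that argument made explicit. You even read \eqref{eq_Xalphafxx0} correctly: the displayed summation over $[\beta]=[\alpha]$ there is a typo, since nonconstant $P_{\alpha,\beta}$ and the factor $(1+|x_0|)^N$ in the statement only make sense for a sum over $[\beta]\geq[\alpha]$. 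When $\delta=0$ your proof is complete.

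The genuine gap is the step you defer to the end: for $\delta>0$ the off-diagonal terms cannot be absorbed by the free parameter $\gamma$, so your plan does not close. Concretely, for $[\beta']>[\beta]$ you must bound
\begin{equation*}
\left\|\pi(\id+\cR)^{-\frac{m-\rho[\alpha]+\delta[\beta]+\gamma}{\nu}}\,(X^{\beta'}\Delta^\alpha\sigma)(xx_0,\pi)\,\pi(\id+\cR)^{\frac{\gamma}{\nu}}\right\|_{\sL(\cH_\pi)},
\end{equation*}
whereas \eqref{eq_SmnormGnilp} controls this operator only when sandwiched between powers whose exponents sum to $-\frac{m-\rho[\alpha]+\delta[\beta']}{\nu}$; the exponents you need sum to $-\frac{m-\rho[\alpha]+\delta[\beta]}{\nu}$, which is strictly larger. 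Matching the left exponents (taking $\gamma'=\gamma-\delta([\beta']-[\beta])$) leaves an uncompensated factor $\pi(\id+\cR)^{\delta([\beta']-[\beta])/\nu}$ on the right; matching the right exponents puts it on the left. Since the sum of the two exponents is independent of $\gamma$, conjugation can only slide this unbounded positive power from one side to the other, never remove it, and a bound with smaller exponent sum never implies one with larger sum (test on $A=\pi(\id+\cR)^{-L-R}$, which is unbounded after re-weighting whenever $\pi(\cR)$ has unbounded spectrum). The homogeneity of $P_{\beta,\beta'}$ cannot rescue this either: the estimate must hold for each fixed $x_0$, and nothing trades growth in $x_0$ against powers of $\pi(\id+\cR)$. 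Nor is there a hidden gain in the class to exploit: on the Heisenberg group, where $\tilde X = X + yZ$ with $[Z]=2>1=[X]$, symbols oscillating in $x$ at the anisotropic scale matched to the spectral blocks of $\cR$ (the graded analogue of $f(\langle\xi\rangle^{\delta}x)\chi(\xi)$, e.g.\ $\sum_j (F\circ\delta_{2^{j\delta/\nu}})\,\chi(2^{-j}\pi(\cR))$) saturate the weight $\delta[Z]$ permitted for $Z\sigma$, and for such symbols $\sup_{x,\pi}\|\pi(\id+\cR)^{-\delta[X]/\nu}X_x(R_{x_0}\sigma)(x,\pi)\|_{\sL(\cH_\pi)}$ diverges. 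So when $\delta>0$ your scheme — which is also the paper's; its one-line proof does not address this point either, and the compact-case mechanism of right-invariant seminorms is unavailable since those seminorms are no longer equivalent to the left ones on a noncompact group — cannot establish the lemma as stated; whether the statement itself survives for $\delta>0$ is a real question, but in any case the $\gamma$-conjugation you bank on is provably not the missing ingredient.
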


\begin{proof}
The proof follows the lines of the proof of Lemma~\ref{lem_translationSm} using \eqref{eq_Xalphafxx0}.
\end{proof}

We define the convolution of a symbol with a (suitable) function formally as in the compact case. As we will see below, 
certain properties of convoluting  a symbol will be more involved in the graded case because derivatives of  higher weights start appearing in the Taylor estimates on graded Lie groups, although on stratified Lie groups, only the derivatives of weight one occur
(see the proof of Lemma \ref{lem_convSm_graded} below). 
More precisely, the analogue of Lemma~\ref{lem_convSm} is the next
Lemma \ref{lem_translationSm_graded} and uses the notation:
\begin{equation}\label{def:upsilon}
 \upsilon := \left\{\begin{array}{ll}
 \upsilon_n & \mbox{if} \ G \ \mbox{is graded},\\
 1 & \mbox{if} \ G \ \mbox{is stratified}.\\
 \end{array}\right.
 \end{equation}

\begin{lemma}
\label{lem_convSm_graded}
Let $m\in \bR$ and $1\geq \rho\geq \delta\geq 0$.
\begin{enumerate}
 \item If $\sigma\in S_{\rho,\delta}^m(G)$ and $\varphi\in \cS(G)$, then 
	 we have $\sigma \,*\, \varphi \in S_{\rho,\delta}^{m}(G)$.
  Moreover,  for any semi-norm $\|\cdot\|_{S_{\rho,\delta}^m,a,b,c}  $ and $\varphi\in \cS(G)$, 
  there exists $C=C(\varphi,b)>0$ such that  
	 $$
  \forall \sigma\in S^m_{\rho,\delta}(G),\qquad
	\|\sigma \,*\, \varphi  \|_{S_{\rho,\delta}^m,a,b,c} 
	\leq C \|\sigma\|_{S_{\rho,\delta}^m,a,0,c}.
	$$
 This implies that $\sigma\mapsto \sigma*\varphi$ is continuous on  $S_{\rho,\delta}^m(G)$. 
 \item 
 Furthermore, if we fix a homogeneous quasi-norm $|\cdot|$ on $G$ and if $\int_G \varphi(y)dy =1$ then  for any semi-norm $\|\cdot\|_{S_{\rho,\delta}^{m+\upsilon \delta},a,b,c}  $, 
 there exists $N\in \bN_0$ and $C'=C'(b)>0$  such that for any $\sigma\in S^m_{\rho,\delta}(G)$, we have:
 $$
\|\sigma*\varphi  - \sigma\|
_{S^{m+\delta \upsilon  }_{\rho,\delta},a,b,c}
\leq 
C'  \int_G |y|^{\upsilon_1} (1+|y|)^N |\varphi(y)|dy \ 
\|\sigma\|_{S^{m}_{\rho,\delta},a,b+\upsilon,c}.$$
\end{enumerate}
\end{lemma}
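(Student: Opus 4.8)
The plan is to mirror the proof of the compact case, Lemma \ref{lem_convSm}, the genuinely new features being the operator-valued weights $\pi(\id+\cR)^{\gamma/\nu}$ entering the seminorms \eqref{eq_SmnormGnilp}, the polynomial growth of right-translates recorded in Lemma \ref{lem_translationSm_graded}, and the higher-weight derivatives forced by the Taylor inequality on a graded (as opposed to stratified) group.

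For Part (1), I would start from the two commutation identities $\Delta^\alpha(\sigma*\varphi) = (\Delta^\alpha\sigma)*\varphi$ (since $\Delta^\alpha$ acts in the $\pi$-variable while the convolution is taken in $x$) and $X^\beta(\sigma*\varphi) = \sigma*(X^\beta\varphi)$ (since $X^\beta$ is left-invariant and $\varphi(z^{-1}x)$ is a left-translate of $\varphi$), so that $\Delta^\alpha X^\beta(\sigma*\varphi) = (\Delta^\alpha\sigma)*(X^\beta\varphi)$. Writing out the convolution and pulling the weights $\pi(\id+\cR)^{\pm}$ inside the $z$-integral, the quantity \eqref{eq_SmnormGnilp} for $\sigma*\varphi$ with index $(\alpha,\beta,\gamma)$ is bounded by $\|X^\beta\varphi\|_{L^1(G)}$ times the supremum over $z$ of the weighted norm of $(\Delta^\alpha\sigma)(z,\pi)$. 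The only point to watch is that the target exponent carries a $\delta[\beta]$ absent from the $\beta=0$ seminorm of $\sigma$; since $\pi(\id+\cR)\geq\id$, the extra left factor $\pi(\id+\cR)^{-\delta[\beta]/\nu}$ has operator norm at most $1$, so it only helps. Taking the maximum over the relevant indices and using $\varphi\in\cS(G)$ (whence $\|X^\beta\varphi\|_{L^1(G)}<\infty$) yields the claimed bound by $\|\sigma\|_{S^m_{\rho,\delta},a,0,c}$.

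For Part (2), using $\int_G\varphi=1$, I would write $(\sigma*\varphi-\sigma)(x,\pi)=\int_G(\sigma(x(z')^{-1},\pi)-\sigma(x,\pi))\varphi(z')\,dz'$ and estimate the weighted norm of $\Delta^\alpha X^\beta$ of the integrand. The heart of the matter is a first-order Taylor (mean-value) inequality on the graded group applied to the map $x_0\mapsto X^\beta_x\sigma(xx_0,\pi)$ between $x_0=(z')^{-1}$ and $x_0=e$: on a graded group this remainder is controlled by homogeneous derivatives $X^{\beta_0}$ of weight $0<[\beta_0]\leq\upsilon$, each accompanied by a factor $|z'|^{[\beta_0]}$, whereas on a stratified group only the weight-one derivatives occur, which is exactly the dichotomy recorded in the definition \eqref{def:upsilon} of $\upsilon$. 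Bounding $|z'|^{[\beta_0]}\leq|z'|^{\upsilon_1}(1+|z'|)^{\upsilon-\upsilon_1}$ produces the leading factor $|z'|^{\upsilon_1}$ and feeds the remaining powers into $(1+|z'|)^N$. Since $|\beta_0|\leq[\beta_0]\leq\upsilon$, the auxiliary derivative has length at most $\upsilon$, so converting $X^{\beta_0}_{x_0}X^\beta_x\sigma(xx_0,\pi)$ into left-invariant derivatives of $\sigma$ via \eqref{eq_Xalphafxx0} produces derivatives $X^{\beta'}$ with $|\beta'|\leq b+\upsilon$ and weight $[\beta']=[\beta]+[\beta_0]$. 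Matching against the target order $m+\delta\upsilon$, the exponent for $\sigma*\varphi-\sigma$ exceeds the $\sigma$-exponent by $\delta(\upsilon-[\beta_0])/\nu\geq 0$, i.e.\ by a non-positive power of $\pi(\id+\cR)$ on the left of norm $\leq 1$; this accounts simultaneously for the loss $\delta\upsilon$ in the order and the index $b+\upsilon$ in the seminorm $\|\sigma\|_{S^m_{\rho,\delta},a,b+\upsilon,c}$.

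The main obstacle is the uniform bookkeeping in Part (2): one must push the left-invariant $x$-derivatives $X^\beta$ through the right-translation $x\mapsto x(z')^{-1}$ and through the evaluation points of the Taylor remainder using \eqref{eq_Xalphafxx0} and Lemma \ref{lem_translationSm_graded}, uniformly in $z'$. The polynomial-in-$z'$ coefficients produced by \eqref{eq_Xalphafxx0} and by the second-kind coordinates used to telescope the path from $e$ to $(z')^{-1}$ are homogeneous of non-negative degree, hence dominated by $(1+|z'|)^N$ for a suitable $N$; this is precisely the role of the factor $(1+|y|)^N$ in the statement. Once these uniform polynomial bounds and the operator-norm inequality $\|\pi(\id+\cR)^{-s}\|_{\sL(\cH_\pi)}\leq 1$ for $s\geq 0$ are in place, integrating against $|\varphi(z')|\,dz'$ delivers the asserted estimate with the factor $\int_G|y|^{\upsilon_1}(1+|y|)^N|\varphi(y)|\,dy$.
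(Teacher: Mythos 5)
Your proposal is correct and follows essentially the same route as the paper's proof: in Part (1) the derivatives and difference operators are commuted through the convolution so that $X^\beta$ lands on $\varphi$, and in Part (2) the difference $\sigma*\varphi-\sigma$ is estimated via the Folland--Stein Taylor inequality, with \eqref{eq_Xalphafxx0} and polynomial bounds on right-translates supplying the factor $(1+|y|)^N$. If anything, your bookkeeping of the $\pi(\id+\cR)$-sandwiched exponents (using that negative powers have operator norm at most $1$) is more explicit than the paper's, which only details the case $m=0$, $a=0$, $\delta=0$, $c=0$ and declares the general case analogous.
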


Recall  that $\upsilon_1=1$ in the stratified case.

\begin{proof} Adapting the proof of the compact case, 
we observe that
$$
\Delta^\alpha X^\beta \sigma*\varphi(x,\pi)
= \int_G R_{y^{-1}}\Delta^\alpha \sigma(x,\pi)X^\beta \varphi(y) dy.
$$
Therefore,  $\varphi$ being Schwartz class and Lemma~\ref{lem_translationSm_graded}
together with \cite[Corollary 3.1.32]{FR} readily imply Part (1).

Assume now $\int_G \varphi(y)dy =1$. Then,
\[
(\sigma*\varphi  - \sigma)(x,\pi)= \int_G\left(\sigma(xy^{-1},\pi)-\sigma(x,\pi)\right) \varphi(y) dy,
\]
By  the Taylor estimates due to Folland and Stein \cite[Section 3.1.8]{FR}, 
we have
\begin{align*}
&  \|X^\beta_x(\sigma*\varphi  - \sigma)(x,\pi)\|_{\sL(\cH_\pi)}
\leq
\int_G 
\left\|X^\beta_x(\sigma(xy^{-1},\pi)-\sigma(x,\pi))\right\|_{\sL(\cH_\pi)} |\varphi(y)| dy
\\
&\qquad \lesssim 
\sum_{j=1}^n
  \int_G |y|^{\upsilon_j}
  \sup_{|y'|\lesssim |y|}
\left\|X_{j,y'}X^\beta_x\sigma(xy',\pi)\right\|_{\sL(\cH_\pi)} |\varphi(y)| dy
\\
&\qquad \lesssim 
\sum_{j=1}^n
  \int_G |y|^{\upsilon_j}
  (1+|y|)^N |\varphi(y)| dy \ 
  \max_{[\beta']=[\beta]+\upsilon_n}
  \sup_{x'\in G}
\left\|X^{\beta'}_x\sigma(x',\pi)\right\|_{\sL(\cH_\pi)} ,
\end{align*}
 for some $N\in \bN_0$,
by \eqref{eq_Xalphafxx0} and \cite[Corollary 3.1.32]{FR}.
This implies Part (2) when $m=0$, $a=0$, $\delta=0$ and $c=0$. The same arguments imply Part (2) for any semi-norm $\|\cdot\|_{S_{\rho,\delta}^{m+\upsilon_n \delta},a,b,c}  $.

In the stratified case, the Taylor estimates due  to Folland and Stein \cite[(1.41)]{folland+stein_82} involve only the left-invariant derivatives of weight $\upsilon_1=1$, yielding Part (2) in this case. 
\end{proof}

\subsection{Proof of the G\aa rding inequality}
Here, we prove the following $(\rho,\delta)$-generalisation of Theorem \ref{thm_Garding_graded}:
\begin{theorem}
\label{thm_Gardingnilp_rhodelta}
Let $G$ be a graded nilpotent Lie group. 
Let $m\in \bR$ and $1\geq \rho>\delta\geq 0$.
	Assume that the symbol $\sigma\in S_{\rho,\delta}^{m}(G)$ satisfies the positivity condition $\sigma \geq 0$.
	Then, for all $\eta>0$, there exists a constant $C_\eta>0$ such that 
	$$
	\forall f\in \cS(G),\qquad 
	\Re \left (\Op^{\rm KN}(\sigma) f, f\right )_{L^2(G)} \geq -\eta  \|f\|_{L^2_{\frac{m+\upsilon\delta}2}(G)}^2 -C_\eta  \|f\|_{L^2_{\frac{m-(\rho-\delta)}2}(G)}^2 
$$	 
where $\upsilon$ is defined in~\eqref{def:upsilon} (recall $\upsilon=1$ if $G$ is stratified).
\smallskip 

Moreover, if $\delta=0$  and  $\sigma\in S_{\rho,\delta}^{m}(G)$ satisfies the elliptic condition $\sigma_0 \geq c (\id  +\widehat \cR)^{\frac {m}\nu}$, 
		for some constant $c_0>0$ where $\cR$ is a positive Rockland operator of homogeneous degree $\nu$.
	Then there exist constants $c,C>0$ such that 
	$$
	\forall f\in \cS(G),\qquad 
	\Re \left (\Op^{\rm KN}(\sigma) f, f\right )_{L^2(G)} \geq c\|f\|_{L^2_{\frac{m}2}(G)}^2 -C \|f\|_{L^2_{\frac{m-\rho}2}(G)}^2 .
$$	 
\end{theorem}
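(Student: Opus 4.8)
The plan is to follow the strategy used for the compact case (Theorem~\ref{thm_Gardingcomp_rhodelta}), replacing each ingredient with its graded analogue, proving first the case $m=0$ and then extending to general $m$ by conjugation. For the extension, set $B=(\id+\cR)^{-m/(2\nu)}$, a Fourier multiplier of order $-m/2$. The composition and adjoint properties of the calculus (Theorem~\ref{thm_calculus_Gnilp}) give
$$
B\,\Op^{\rm KN}(\sigma)\,B = \Op^{\rm KN}(\sigma_1)+\Op^{\rm KN}(\tau_1),
\qquad \sigma_1(x,\pi):=\pi(\id+\cR)^{-\frac m{2\nu}}\sigma(x,\pi)\,\pi(\id+\cR)^{-\frac m{2\nu}},
$$
with $\sigma_1\in S^0_{\rho,\delta}(G)$ and $\tau_1\in S^{-(\rho-\delta)}_{\rho,\delta}(G)$. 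Since $B$ is self-adjoint, $\sigma_1$ inherits positivity ($\sigma\geq 0\Rightarrow\sigma_1\geq 0$), and in the elliptic case $\sigma\geq c_0(\id+\widehat\cR)^{m/\nu}$ becomes $\sigma_1\geq c_0\,\id$ because the Rockland powers commute. Testing against $f_1=B^{-1}f=(\id+\cR)^{m/(2\nu)}f$ and using $\|f_1\|_{L^2_s(G)}\sim\|f\|_{L^2_{s+m/2}(G)}$ then transfers the $m=0$ estimate to general $m$, exactly as in the compact proof.

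For the $m=0$ problem with $\sigma=\sigma^*\geq 0$, I would use the graded analogue of Lemma~\ref{lem_OpW}, whose proof carries over since it only relies on Lemma~\ref{lem_translationSm_graded} and the graded version of Lemma~\ref{lem_contDelta_q} (valid for $a\in\cS(G)$), to write
$$
\tfrac12\big(\Op^{\rm KN}(\sigma)+\Op^{\rm KN}(\sigma)^*\big)
=\Op^{\rm Wick}(\sigma)+\Op^{\rm KN}(\sigma-\sigma*|a|^2)+\Op^{\rm KN}(\tau),
$$
with $\tau\in S^{-(\rho-\delta)}_{\rho,\delta}(G)$. The first term is non-negative by~\eqref{positivity} (and bounded below by $c_0\|f\|_{L^2(G)}^2$ when $\sigma\geq c_0\,\id$), while $\Op^{\rm KN}(\tau)$ maps $L^2_{-(\rho-\delta)/2}(G)\to L^2_{(\rho-\delta)/2}(G)$ and thus contributes $-C_\eta\|f\|_{L^2_{-(\rho-\delta)/2}(G)}^2$. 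Everything then hinges on making the middle term small.

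The decisive point is the choice of $a$. Rather than taking a square root of a heat kernel, which need not be smooth in the graded case (as noted after Lemma~\ref{joe}), I would fix $a_1\in\cS(G)$ with $\|a_1\|_{L^2(G)}=1$ and set $a=a_t:=t^{-Q/2}\,a_1\circ\delta_{t^{-1}}$; then $a_t\in\cS(G)$, $\|a_t\|_{L^2(G)}=1$, and $|a_t|^2=\varphi_t$ where $\varphi_t=t^{-Q}\varphi_1\circ\delta_{t^{-1}}$ with $\varphi_1:=|a_1|^2\geq 0$ a non-negative Schwartz approximation of the identity in the sense of Lemma~\ref{joe}. The graded analogue of Lemma~\ref{lem_control} then follows from Lemma~\ref{lem_convSm_graded}~(2): with the dilation change of variables $y=\delta_t z$ and $|\delta_t z|=t|z|$,
$$
\int_G |y|^{\upsilon_1}(1+|y|)^N|\varphi_t(y)|\,dy
= t^{\upsilon_1}\int_G |z|^{\upsilon_1}(1+t|z|)^N|\varphi_1(z)|\,dz
\lesssim t^{\upsilon_1},
$$
so that $\|\sigma-\sigma*\varphi_t\|_{S^{\upsilon\delta}_{\rho,\delta},a_0,b_0,c_0}\lesssim t^{\upsilon_1}\|\sigma\|_{S^0_{\rho,\delta},a_0,b_0+\upsilon,c_0}\to 0$ as $t\to 0$. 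Consequently $\Op^{\rm KN}(\sigma-\sigma*|a_t|^2)\colon L^2_{\upsilon\delta/2}(G)\to L^2_{-\upsilon\delta/2}(G)$ has operator norm tending to $0$, and choosing $t=t(\eta)$ small makes it $\leq\eta$. Collecting the three contributions yields the $m=0$ inequality with loss in $L^2_{\upsilon\delta/2}(G)$; in the elliptic case $\delta=0$ forces $\upsilon\delta=0$, so the $\eta$-term is $\eta\|f\|_{L^2(G)}^2$ and is absorbed by $c_0\|f\|_{L^2(G)}^2$ upon taking $\eta<c_0$.

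The main obstacle is not the algebra of the decomposition, which is formally identical to the compact case, but the approximation-of-identity step: the absence of a canonical strictly positive smooth density forces the rescaled-Schwartz-function construction above, and the graded Taylor estimate (Lemma~\ref{lem_convSm_graded}~(2)) produces a loss of $\upsilon\delta$ rather than $\delta$ because derivatives of the top weight $\upsilon_n$ enter. Keeping track of this $\upsilon$-factor throughout, and checking that $a_t$ is simultaneously Schwartz and satisfies $|a_t|^2=\varphi_t\geq 0$ with unit mass, is the part that genuinely requires the homogeneous structure of $G$; on stratified groups the top weight is $1$ and the loss reduces to the expected $\delta$.
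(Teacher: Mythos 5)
Your proposal is correct and follows essentially the same route as the paper: the same reduction to $m=0$ via conjugation by $(\id+\cR)^{-m/(2\nu)}$, the same decomposition through the graded analogue of Lemma~\ref{lem_OpW} (Lemma~\ref{lem_OpW_graded}), and the same choice $a=a_t=t^{-Q/2}a_1\circ\delta_{t^{-1}}$ so that $|a_t|^2$ is a dilated Schwartz approximation of the identity, controlled by Lemma~\ref{lem_convSm_graded}~(2) (the paper's Corollary~\ref{cor_lem_convSm_graded}). Your dilation computation yielding the $t^{\upsilon_1}$ bound and the absorption of the $\eta$-term in the elliptic case when $\delta=0$ match the paper's argument exactly.
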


The analogue of Remark~\ref{rem:pluto} is true  in the case of nilpotent Lie groups. 

The proof of  Theorem \ref{thm_Gardingnilp_rhodelta}  is an adaptation of the case of compact groups given in  Section~\ref{subsec_pfGarding_comp}. We first need to replace the Sobolev spaces $H^s(G)$ with the  Sobolev space $L^2_s(G)$ adapted to the graded nilpotent case. Moreover, in the final argument showing that  
the case of a symbol  $\sigma\in S_{\rho,\delta}^{m}(G)$  follows from the case of a symbol of order 0, we need to  replace $\sigma_1$ with 
$$
\sigma_1 = (\id+\widehat\cR)^{-\frac m {2\nu}} \sigma  
(\id+\widehat\cR)^{-\frac m {2\nu}}.
$$

Before detailing the proof for a symbol $\sigma\in S_{\rho,\delta}^0(G)$ of order 0, we discuss the two main ingredients: the use of an approximation of the identity and the comparison of the Kohn-Niremberg approximation with the Wick's one.  
We shall use the approximation of identity of Lemma~\ref{joe} and the following corollary of Lemma~\ref{lem_convSm_graded}:

\begin{corollary}
\label{cor_lem_convSm_graded}
We continue with the setting  of Lemma~\ref{joe}. 
Let
$m\in \bR$ and  $1\geq \rho> \delta\geq 0$.
For any semi-norm $\|\cdot \|_{S^{m+\upsilon \delta}_{\rho,\delta}(G),a_0,b_0,c_0}$ and any $\varphi_1\in \cS(G)$, 
there exists $C>0$  such that 
$$
\forall t\in (0,1],\qquad 
\forall \sigma\in S^m_{\rho,\delta}(G),\qquad 
\|\sigma -  \sigma*\varphi_t\|_{S^{m+\upsilon \delta}_{\rho,\delta}(G),a_0,b_0,c_0}\leq C \, t 
\| \sigma\|_{S^{m}_{\rho,\delta}(G),a_0,b_0+\upsilon,c_0},
$$
where $\nu$ is defined by \eqref{def:upsilon}.
\end{corollary}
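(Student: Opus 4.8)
The plan is to obtain the corollary as a direct consequence of the quantitative estimate in Lemma~\ref{lem_convSm_graded}~(2), applied with $\varphi = \varphi_t$, and then to track the dependence on $t$ of the integral factor that appears there. Since $\int_G \varphi_t = \int_G \varphi_1 = 1$ for every $t > 0$, Lemma~\ref{lem_convSm_graded}~(2) provides, for the chosen semi-norm, an integer $N \in \bN_0$ and a constant $C' = C'(b_0) > 0$ such that
$$
\|\sigma * \varphi_t - \sigma\|_{S^{m+\upsilon\delta}_{\rho,\delta}, a_0, b_0, c_0}
\leq C' \Big( \int_G |y|^{\upsilon_1} (1+|y|)^N |\varphi_t(y)| \, dy \Big) \, \|\sigma\|_{S^m_{\rho,\delta}, a_0, b_0+\upsilon, c_0},
$$
uniformly in $\sigma \in S^m_{\rho,\delta}(G)$. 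Thus the entire statement reduces to showing that the integral in parentheses is $\lesssim t$ for $t \in (0,1]$, with a constant depending only on $\varphi_1$ and $N$.

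First I would perform the change of variables $y = \delta_t z$ in that integral. Recalling that $\varphi_t = t^{-Q} \varphi_1 \circ \delta_{t^{-1}}$ and that the Haar measure is homogeneous of degree $Q$ (so that $dy = t^Q \, dz$), the prefactor $t^{-Q}$ exactly cancels the Jacobian. Using the $1$-homogeneity of the quasi-norm, namely $|y| = |\delta_t z| = t\,|z|$, the integral becomes
$$
\int_G |y|^{\upsilon_1} (1+|y|)^N |\varphi_t(y)| \, dy
= t^{\upsilon_1} \int_G |z|^{\upsilon_1} (1+t|z|)^N |\varphi_1(z)| \, dz.
$$
For $t \in (0,1]$ we have $(1+t|z|)^N \leq (1+|z|)^N$, so the remaining integral is bounded by $\int_G |z|^{\upsilon_1} (1+|z|)^N |\varphi_1(z)| \, dz$, which is finite because $\varphi_1$ is Schwartz. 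This yields a bound of the form $C_1 \, t^{\upsilon_1}$ with $C_1 = C_1(\varphi_1, N)$.

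Finally, since the dilation weights are positive integers, we have $\upsilon_1 \geq 1$, whence $t^{\upsilon_1} \leq t$ for every $t \in (0,1]$. Combining this with the two previous displays and setting $C := C' C_1$ gives the claimed estimate. I do not expect any genuine obstacle here: the analytic content was already extracted in Lemma~\ref{lem_convSm_graded}, and the only points requiring care are the homogeneity bookkeeping in the change of variables (the cancellation of $t^{-Q}$ against the Jacobian, and the scaling $|y| = t|z|$ producing precisely the factor $t^{\upsilon_1}$) together with the elementary inequality $t^{\upsilon_1} \leq t$, which holds exactly because $\upsilon_1 \geq 1$ and $t \leq 1$.
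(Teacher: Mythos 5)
Your proof is correct and follows essentially the same route as the paper's: apply Lemma~\ref{lem_convSm_graded}~(2) to $\varphi_t$, perform the change of variables $y=\delta_t z$ using the homogeneity of the Haar measure and of the quasi-norm, and conclude via $t^{\upsilon_1}\leq t$ for $t\in(0,1]$. The only (immaterial) difference is that you bound $(1+t|z|)^N\leq(1+|z|)^N$ directly, where the paper uses $(1+|\delta_t y'|)\leq(1+t)(1+|y'|)$.
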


\begin{proof}
    By Lemma~\ref{lem_convSm_graded} (2), 
\begin{align*}
    \|\sigma -  \sigma*\varphi_t\|_{S^{m+\upsilon \delta}_{\rho,\delta}(G),a_0,b_0,c_0}
    &\leq C' \| \sigma\|_{S^{m}_{\rho,\delta}(G),a_0,b_0+\upsilon,c_0}\int_G | y|^{\upsilon_1} (1+|y|)^N |\varphi_t(y)|dy\\
       &\leq C' \| \sigma\|_{S^{m}_{\rho,\delta}(G),a_0,b_0+\upsilon,c_0} \int_G | \delta_t y'|^{\upsilon_1} (1+|\delta_t y'|)^N |\varphi_1(y')|dy',
\end{align*}
after the change of variable $y=\delta_t y'$.
Since 
$$
| \delta_t y'|=t|y'|\qquad\mbox{and}\qquad
(1+|\delta_t y'|)\leq (1+t) (1+|y'|),
$$
the conclusion follows  for $t\in (0,1]$.
\end{proof}

As in compact groups, the main step in our proof will be the analysis of the Wick quantization in the $\Psi^\infty_{\rho,\delta}$-calculus. Indeed, if $\sigma\in S_{\rho,\delta}^m(G)$ with $m\leq 0$, then 
 $\sigma\in L^\infty(G\times \Ghat)$ and one can consider $\Op^{\rm Wick}(\sigma)$ and its membership in the $\Psi^\infty_{\rho,\delta}$-calculus.

\begin{lemma}
\label{lem_OpW_graded}
Let $a\in\cS(G)$ and 
 consider the Wick quantization $\Op^{{\rm Wick},a}$ with~$a$. 
Let $m\in \bR$ and $1\geq \rho\geq \delta\geq 0$ with $\delta\neq 1$.
\begin{enumerate}
	\item If $\sigma\in S_{\rho,\delta}^m(G)$ with $m\leq 0$, then 
 $\sigma\in L^\infty(G\times \Ghat)$ and $\Op^{\rm Wick}(\sigma)\in \Psi_{\rho,\delta}^m(G)$. Moreover, the map $\sigma \mapsto \Op^{\rm Wick}(\sigma)$ is continuous $S_{\rho,\delta}^m(G)\to \Psi_{\rho,\delta}^m(G)$.

 \item  If $\sigma\in S_{\rho,\delta}^m(G)$ with $m\leq 0$, 
	 then we have
	$$
	\Op^{\rm Wick}(\sigma) - \Op^{\rm KN} (\sigma *|a|^2) \in \Psi_{\rho,\delta}^{m-(\rho-\delta)}(G).
	$$ 
Moreover, the map $\sigma \mapsto \Op^{\rm Wick}(\sigma) - \Op^{\rm KN} (\sigma *|a|^2)$ is continuous $S_{\rho,\delta}^m(G)\to \Psi_{\rho,\delta}^{m-(\rho-\delta)}(G)$.
\end{enumerate}
\end{lemma}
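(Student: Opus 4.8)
The plan is to follow the argument of the compact case (proof of Lemma~\ref{lem_OpW}) almost verbatim, the only genuinely new point being that $G$ is no longer compact, so that the auxiliary $z$-integral below now runs over all of $G$ and its convergence must be secured by the Schwartz decay of $a$. The fact that $\sigma\in L^\infty(G\times\Ghat)$ when $m\leq 0$ is immediate from the symbol estimate \eqref{eq_SmnormGnilp} with $\alpha=\beta=\gamma=0$. As in the compact setting, I would then rephrase Lemma~\ref{lem_kW} as
\[
\Op^{\rm Wick}(\sigma) = \Op^{\rm KN}(\sigma^{\rm Wick}),
\qquad
\sigma^{\rm Wick}(x,\pi) = \int_G \Delta_{q_z}\bigl(R_{z^{-1}}\sigma\bigr)(x,\pi)\, dz,
\]
where $q_z(w) = a(zw^{-1})\bar a(z)$ and $R_{z^{-1}}\sigma(x,\pi)=\sigma(xz^{-1},\pi)$; here $\Delta_{q_z}$ (acting in $\pi$) and $R_{z^{-1}}$ (acting in $x$) commute. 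Since $a\in\cS(G)$ and the Schwartz class is stable under inversion and translations, $q_z\in\cS(G)$ for each fixed $z$. Thus both parts reduce to showing that the integrand lies in the expected symbol class with seminorms integrable in $z$.

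For Part~(1), I would estimate the integrand in a fixed seminorm of $S^m_{\rho,\delta}(G)$ by combining the two graded stability lemmas. Lemma~\ref{lem_translationSm_graded} gives $\|R_{z^{-1}}\sigma\|_{S^m_{\rho,\delta},a,b,c}\lesssim (1+|z|)^N\|\sigma\|_{S^m_{\rho,\delta},a,b,c}$ (using $|z^{-1}|=|z|$), while the graded version of Lemma~\ref{lem_contDelta_q}(1) gives $\|\Delta_{q_z}\tau\|_{S^m_{\rho,\delta},a,b,c}\lesssim \|q_z\|_{\cS,K}\,\|\tau\|_{S^m_{\rho,\delta},a,b,c}$ for a fixed Schwartz seminorm $\|\cdot\|_{\cS,K}$ depending only on $a,b,c$. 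It then remains to control $\|q_z\|_{\cS,K}$: differentiating $q_z(w)=a(zw^{-1})\bar a(z)$ in $w$ produces derivatives of $a$ evaluated at $zw^{-1}$ (up to polynomial factors), and the quasi-norm inequality $(1+|w|)\lesssim (1+|z|)(1+|zw^{-1}|)$ yields, after the substitution $u=zw^{-1}$, a bound $\|q_z\|_{\cS,K}\lesssim |a(z)|\,(1+|z|)^M$ for a suitable $M$. Hence the integrand has seminorm $\lesssim |a(z)|(1+|z|)^{N+M}\|\sigma\|_{S^m_{\rho,\delta},a,b,c}$, and since $a$ is Schwartz, $\int_G |a(z)|(1+|z|)^{N+M}\,dz<\infty$. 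By the triangle (Minkowski) inequality for these sup-type seminorms, the integral defining $\sigma^{\rm Wick}$ converges in $S^m_{\rho,\delta}(G)$ with $\|\sigma^{\rm Wick}\|_{S^m_{\rho,\delta},a,b,c}\lesssim \|\sigma\|_{S^m_{\rho,\delta},a,b,c}$; the definition of $\Psi^m_{\rho,\delta}(G)$ as $\Op^{\rm KN}(S^m_{\rho,\delta}(G))$ and the continuity of $\Op^{\rm KN}$ (Theorem~\ref{thm_calculus_Gnilp}) then give the membership and continuity asserted.

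For Part~(2), I would use that $q_z(e_G)=|a(z)|^2$ and that the difference operator attached to the constant $q_z(e_G)$ acts by scalar multiplication, so that $\int_G \Delta_{q_z(e_G)}(R_{z^{-1}}\sigma)\,dz = \int_G |a(z)|^2\sigma(xz^{-1},\pi)\,dz = \sigma*|a|^2$. Subtracting,
\[
\sigma^{\rm Wick}-\sigma*|a|^2 = \int_G \Delta_{q_z-q_z(e_G)}\bigl(R_{z^{-1}}\sigma\bigr)\,dz.
\]
Now the graded version of Lemma~\ref{lem_contDelta_q}(2) shows that $\Delta_{q_z-q_z(e_G)}$ maps $S^m_{\rho,\delta}(G)$ continuously into $S^{m-(\rho-\delta)}_{\rho,\delta}(G)$, with operator seminorm again controlled by a fixed Schwartz seminorm of $q_z$. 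Re-running the seminorm-in-$z$ estimate of Part~(1) (with the target order lowered to $m-(\rho-\delta)$) shows that this integral converges in $S^{m-(\rho-\delta)}_{\rho,\delta}(G)$, whence $\sigma^{\rm Wick}-\sigma*|a|^2\in S^{m-(\rho-\delta)}_{\rho,\delta}(G)$ with continuous dependence on $\sigma$; Theorem~\ref{thm_calculus_Gnilp} translates this into the claimed statement about operators.

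The only real obstacle compared with the compact case is the convergence of the $z$-integral, i.e.\ the bookkeeping that the rapid decay $|a(z)|$ dominates both the polynomial growth $(1+|z|)^N$ coming from the non-compactness in Lemma~\ref{lem_translationSm_graded} and the polynomial factor $(1+|z|)^M$ arising from the Schwartz seminorms of $q_z$. Everything else is a transcription of the compact argument, replacing $\cD(G)$ by $\cS(G)$ throughout and the compact symbol classes and Sobolev spaces by their graded counterparts.
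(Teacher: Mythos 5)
Your proof is correct and follows essentially the same route as the paper's: the same rewriting of Lemma~\ref{lem_kW} as $\sigma^{\rm Wick}=\int_G \Delta_{q_z}(R_{z^{-1}}\sigma)\,dz$, the same appeal to Lemma~\ref{lem_translationSm_graded} and the graded versions of Lemma~\ref{lem_contDelta_q}\,(1)--(2), and the same subtraction of the constant term $q_z(e_G)$ for Part~(2). The only difference is that you spell out the quantitative bookkeeping (the bound $\|q_z\|_{\cS,K}\lesssim|a(z)|(1+|z|)^M$ via the quasi-norm inequality, and the integrability in $z$) that the paper compresses into ``using Lemma~\ref{lem_translationSm_graded} and the fact that $a\in\cS(G)$''.
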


\begin{proof}
We adapt the proof of Lemma \ref{lem_OpW} and start by  rephrasing Lemma \ref{lem_kW} as $\Op^{\rm Wick}(\sigma) = \Op^{\rm KN}(\sigma^{\rm Wick})$
with 
$$
\sigma^{\rm Wick} (x,\pi)= \int_G \Delta_{q_z} \sigma (xz^{-1},\pi) \ \bar a(z) \, dz
= \int_G R_{z^{-1}}\Delta_{q_z} \sigma (x,\pi)\ \bar a(z) \, dz
$$
where $q_z(w) := a(zw^{-1})$. 
Since Lemma~\ref{lem_contDelta_q} (1) also holds on graded nilpotent Lie groups $G$ for Schwartz functions, using 
Lemma \ref{lem_translationSm_graded} and  the fact that $a\in \cS(G)$,  we obtain Point (1).

For Point (2), we observe that 
$$
\int_G \Delta_{q_z(e_G)} \sigma (xz^{-1},\pi)\, \bar a(z)\,dz
=\int_G  |a|^{2}(z) \, \sigma(xz^{-1},\pi)\, dz
= \sigma*|a|^2 (x,\pi).
$$
Hence, 
\[
\sigma^{\rm Wick}(x,\pi)- \sigma*|a|^2 (x,\pi) =\int_G  R_{z^{-1} }\Delta_{q_z-q_z(e_G)}\sigma(x,\pi)\ \bar a(z) \,  dz,
\]
and 
we conclude using Lemma \ref{lem_contDelta_q} (2) for graded groups, the estimate of Lemma \ref{lem_translationSm_graded} and the fact that $a\in\cS(G)$.
\end{proof}

We can now prove  Theorem \ref{thm_Gardingnilp_rhodelta}.

\begin{proof}[Proof of Theorem~\ref{thm_Gardingnilp_rhodelta}]
As explained above, it suffices to show the statement for 
 $\sigma\in S_{\rho,\delta}^0(G)$ satisfying $\sigma(x,\pi)\geq 0$.
We fix a function $a_1\in \cS(G)$ with $\|a_1\|_{L^2(G)}=1$, 
and set $a_t(x):=t^{-Q/2}a_1(\delta_t^{-1} x)$, $x\in G$, $t>0$.
We observe that $|a_t|^2 = t^{-Q} |a_1|^2 \circ \delta_t^{-1}$, $t>0$, is an approximation of the identity in the sense of Lemma \ref{joe}. 

We now consider the Wick quantization $\Op^{{\rm Wick}, a}$ with $a=a_t$ to be chosen at the end of the proof.
The properties of the Wick quantization (see \eqref{positivity}) 
imply that for all $f\in \cS(G)$,
\begin{align*}
	\Re (\Op^{\rm KN}(\sigma) f, f)_{L^2(G)}
&\geq 
-\left| \left(\Op^{\rm KN} (\sigma -  \sigma*|a|^2 ) f, f\right)_{L^2(G)}\right|
 -  C
\|f\|^2_{L^2_{-\frac {\rho-\delta}2}(G)},
\end{align*}
for some constant $C>0$. 
We now choose $a=a_t$
with $t>0$ small enough so that, 
by Corollary \ref{cor_lem_convSm_graded},
$$
\left|\left(\Op^{\rm KN} (\sigma -  \sigma*|a|^2 ) f, f\right)_{L^2(G)}\right|
\leq \eta   \|f\|_{L^2_{\frac {\upsilon\delta} 2}(G)}^2.
$$
 This shows the case of $\sigma\in S_{\rho,\delta}^0(G)$ and  we can conclude the proof of Theorem \ref{thm_Gardingnilp_rhodelta} in a similar manner as for Theorem \ref{thm_Gardingcomp_rhodelta}. 
\end{proof}

\section{Semi-classical G\r  arding inequality on graded nilpotent Lie groups}
\label{sec_SC_nilpG}

In this section, we show the semi-classical inequality stated in Theorem \ref{thm:garding_sc}. The proof is inspired by Lemma~1.2 in~\cite{gerard_leichtnam}.
Before this, we recall the definition of the semi-classical calculus and we introduce the  Wick quantization adapted to the semi-classical setting.

\subsection{Semi-classical pseudodifferential calculus}
\label{subsec_semiclC}

The set $\mathcal A_0$ is the space of symbols
$\sigma = \{\sigma(x,\pi) : (x,\pi)\in G\times \widehat G\}$ of the form 
$$
\sigma(x,\pi)=\mathcal F \kappa_x (\pi) = \int_G \kappa_x(y) (\pi(y))^* dy, 
$$
where $(x,y)\mapsto \kappa_x(y)$ is a function of  the topological vector space  $\mathcal C_c^\infty(G,\mathcal S( G))$ of smooth and compactly supported functions in the variable $x \in G$ valued in the set of Schwartz class functions. 
As before, $x\mapsto \kappa_x$ is called the convolution kernel of $\sigma$. 

With the symbol $\sigma\in\mathcal A_0$, we associate the (family of) {\it semi-classical pseudodifferential operators} 
$$
{\rm Op}_\eps(\sigma) = \Op^{\rm KN} \left(\sigma (\cdot ,\delta_\eps \, \cdot)\right) , \qquad \eps\in (0,1], 
$$ 
where the Kohn-Nirenberg quantization $\Op^{\rm KN}$ was defined in Section \ref{subsubsec_OPKN}
and $\delta_r$ denotes the action of $\mathbb{R}^+$ 
on $\widehat G$ given via
$$
\delta_r \pi (x)
=
\pi(\delta_r x),\quad x\in G,\
\pi\in \widehat G, \ r>0.
$$
By Plancherel's theorem, in particular the uniqueness of the Plancherel measure $d\mu$, the latter is $Q$-homogeneous on $\widehat G$ for these dilations. 

In other words, we have
$$
{\rm Op}_\eps(\sigma) f(x) = \int_{\pi\in\widehat G} {\rm Tr}_{{\mathcal H}_\pi} \left(  \pi(x) \sigma(x,\delta_\eps\pi) {\mathcal F} f(\pi)  \right)d\mu(\pi),\;\;
f\in \mathcal S(G), \, x\in G.
$$
In terms of the convolution kernel $\kappa_x =\mathcal F^{-1} \sigma(x,\cdot)$, we have
$$
{\rm Op}_\eps(\sigma) f(x) = f* \kappa^{(\eps)}_x (x),\quad 
f\in \mathcal S(G), \, x\in G.
$$
Above, $\kappa^{(\eps)}_x$ is the 
convolution kernel of $\sigma (\cdot ,\delta_\eps \, \cdot) $ and is given by a rescaling of the convolution kernel of $\sigma$:
$$
\kappa^{(\eps)}_x (y) := \eps^{-Q} \kappa_x(\delta_\eps ^{-1} y).
$$

 \subsection{The semi-classical Wick quantization}\label{sec:frame}

Let $a\in \mathcal S(G)$ such that $\| a\|_{L^2(G)}=1$.
We set
$$
a_\eps := \eps^{-\frac Q4} a \circ \delta_{\eps^{-\frac 12}},
\qquad \eps>0,
$$
so that $a_\eps\in \cS(G)$ with 
 $\| a_\eps \|_{L^2(G)}=1$.
Moreover, for each $(x,\pi)\in G\times \Ghat$, we define the  operator on $\cH_\pi$ depending on $y\in G$,
$$
\sF_{x,\pi}^\eps (y)=  a_\eps (x^{-1} y)\delta_\eps ^{-1}\pi(y)^*,
$$
and define the operator $\cB^\eps$ on $\cS(G)$ via
$$
\cB^\eps [f](x,\pi)= \eps^{-\frac Q2}  \int_G f(y) \sF^\eps _{x,\pi}(y) dy,\quad f\in \cS(G), \ (x,\pi)\in G\times \Ghat.
$$
Note that, with respect to the operator $\cB_a$ defined in Section \ref{subsubsec_B}, we have
$$
\cB^\eps [f](x,\pi) =\eps^{-\frac Q2} \cB_{a_\eps}[f](x,\delta_{\eps}^{-1} \pi).
$$
Hence, by Proposition \ref{prop_B}, 
the map $\cB^\eps $ extends uniquely to an isometry from $L^2(G)$ to
$L^2(G\times \widehat G)$ for which we keep the same notation. Denoting by 
$\cB^{\eps,*} : L^2(G\times \widehat G)\to L^2(G)$  its adjoint map, we have $\cB^{\eps,*}\cB^\eps=\id_{L^2(G)}$ while $\cB^\eps \cB^{\eps,*}$ is a projection on a closed subspace of $L^2(G\times \Ghat)$.
\smallskip

Set for $(x,\pi)\in G\times \Ghat$,
\[ 
g_{x,\pi,k,\ell}^\ell(y):=  \left(\sF^\eps_{x,\pi}(y)^*\varphi_k(\pi),\varphi_\ell(\pi)\right)_{\cH_\pi},\;y\in G,
\]
where $(\cdot,\cdot)_{\cH_\pi}$ denotes the inner product of~$\mathcal H_\pi$ and $(\varphi_k(\pi))_{k\in I_\pi}$, $I_\pi\subset \N$ is an orthonormal basis of~$\mathcal H_\pi$. Then,  arguing as  in Corollary~\ref{cor:frame}, we obtain the semi-classical
 frame decomposition: 
    \[
f=\int_{G\times \widehat G}
\sum_{k,\ell\in I_\pi} \left( f, g_{x,\pi,k,\ell}^\eps\right)_{L^2(G)} \; g_{x,\pi,k,\ell}^\eps \; dxd\mu(\pi),\;\; f\in L^2(G).
    \]

\smallskip

We define the semi-classical Wick quantization for $\sigma\in L^\infty(\Ghat)$ 
$$
\Op^{\rm Wick}_\eps(\sigma) :=  \cB^{\eps,*} \sigma\, \cB^\eps.
$$
Here again, it is a positive quantization and 
we can compute the convolution kernel of $\Op^{\rm Wick}_\eps(\sigma)$ as in Lemma \ref{lem_kW}:

\begin{lemma}
If $\sigma\in \cA_0$, 
then 
$$
\Op^{\rm Wick}_\eps(\sigma) = \Op_\eps (\sigma^{\eps,{\rm Wick}}),
$$
where $\sigma^{\eps,{\rm Wick}}\in \cA_0$ has the convolution kernel 
\begin{align*}
	\kappa^{\eps,{\rm Wick}}_x(w) 
&=
\int_G a ( z'\delta_{\sqrt{\eps}} w^{-1})\bar a (z') \kappa_{x\delta_{\sqrt{\eps}}{z'}^{-1}}(w) dz'.
\end{align*}
\end{lemma}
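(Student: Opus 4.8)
The plan is to reduce $\Op^{\rm Wick}_\eps$ to the non--semi-classical Wick quantization already treated in Lemma~\ref{lem_kW}, and then to unravel the dilations by a change of variables.

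First I would rewrite the semi-classical transform in terms of the ordinary one. Using the relation $\cB^\eps[f](x,\pi)=\eps^{-Q/2}\cB_{a_\eps}[f](x,\delta_\eps^{-1}\pi)$ recorded above and testing against $f,g\in\cS(G)$, one has
$$
(\Op^{\rm Wick}_\eps(\sigma)f,g)_{L^2(G)}=(\sigma\,\cB^\eps[f],\cB^\eps[g])_{L^2(G\times\Ghat)}.
$$
Substituting the relation produces a factor $\eps^{-Q}$ and shifts the representation variable to $\delta_\eps^{-1}\pi$; performing the change of variable $\pi\mapsto\delta_\eps\pi$, whose Jacobian is $\eps^{Q}$ since the Plancherel measure $\mu$ is $Q$-homogeneous, the two powers cancel and I obtain
$$
(\Op^{\rm Wick}_\eps(\sigma)f,g)_{L^2(G)}=(\sigma_\eps\,\cB_{a_\eps}[f],\cB_{a_\eps}[g])_{L^2(G\times\Ghat)},\qquad \sigma_\eps:=\sigma(\cdot,\delta_\eps\cdot).
$$
This yields the operator identity $\Op^{\rm Wick}_\eps(\sigma)=\Op^{{\rm Wick},a_\eps}(\sigma_\eps)$, where $\sigma_\eps$ has convolution kernel $\kappa^{(\eps)}_x(y)=\eps^{-Q}\kappa_x(\delta_\eps^{-1}y)$.

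Next, since $\sigma\in\cA_0$ has a kernel that is Schwartz in $y$, the symbol $\sigma_\eps$ belongs to $\cC_b(G,\cF L^1(G))$, so Lemma~\ref{lem_kW} applies with the function $a_\eps$ and gives $\Op^{{\rm Wick},a_\eps}(\sigma_\eps)f(x)=f*\tilde\kappa_x(x)$ with
$$
\tilde\kappa_x(w)=\int_G a_\eps(z'w^{-1})\,\bar a_\eps(z')\,\kappa^{(\eps)}_{xz'^{-1}}(w)\,dz'.
$$
It then remains to recognise this as a semi-classical Kohn--Nirenberg operator, i.e. to verify $\tilde\kappa_x(w)=\eps^{-Q}\kappa^{\eps,{\rm Wick}}_x(\delta_\eps^{-1}w)$, which is exactly the statement $\Op^{{\rm Wick},a_\eps}(\sigma_\eps)=\Op_\eps(\sigma^{\eps,{\rm Wick}})$ once one recalls $\Op_\eps(\tau)f(x)=f*\tau^{(\eps)}_x(x)$.

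To conclude I would insert $a_\eps=\eps^{-Q/4}a\circ\delta_{\sqrt\eps}^{-1}$ and the rescaled kernel $\kappa^{(\eps)}$ into $\tilde\kappa$, and perform the substitution $z'=\delta_{\sqrt\eps}u$, whose Jacobian is $\eps^{Q/2}$. Using that the dilations are group automorphisms, so that $\delta_{\sqrt\eps}^{-1}(z'w^{-1})=u\,(\delta_{\sqrt\eps}^{-1}w)^{-1}$ and $x(\delta_{\sqrt\eps}u)^{-1}=x\,\delta_{\sqrt\eps}u^{-1}$, together with the identity $\delta_{\sqrt\eps}\circ\delta_\eps^{-1}=\delta_{\sqrt\eps}^{-1}$, the powers of $\eps$ (namely $\eps^{-Q/2}$ from the two factors $a_\eps$, $\eps^{-Q}$ from $\kappa^{(\eps)}$, and $\eps^{Q/2}$ from the Jacobian) combine to $\eps^{-Q}$ and the arguments rearrange exactly into the claimed expression for $\kappa^{\eps,{\rm Wick}}$. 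The main --- though purely computational --- obstacle is precisely this bookkeeping of the dilations and the powers of $\eps$: one must keep careful track of how $\delta_{\sqrt\eps}$ and $\delta_\eps^{-1}$ interact inside the arguments of $a$ and $\kappa$, using repeatedly that $\delta_r(uv)=\delta_r(u)\delta_r(v)$. Finally, the asserted membership $\sigma^{\eps,{\rm Wick}}\in\cA_0$ reflects the smoothness in $x$ and the rapid decay inherited from $a\in\cS(G)$ and $\kappa\in\cC_c^\infty(G,\cS(G))$.
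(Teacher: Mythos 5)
Your proposal is correct and follows essentially the same route as the paper: the paper's proof is precisely ``arguing as in Lemma~\ref{lem_kW}'' with the rescaled window $a_\eps$ and the dilated symbol, followed by the changes of variables $z=xz'^{-1}$ and $z'=\delta_{\sqrt\eps}u$ that you carry out. The only difference is presentational --- you make explicit the intermediate identity $\Op^{\rm Wick}_\eps(\sigma)=\Op^{{\rm Wick},a_\eps}(\sigma(\cdot,\delta_\eps\cdot))$ via the $Q$-homogeneity of the Plancherel measure, which the paper leaves implicit, and this is a welcome clarification rather than a different argument.
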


\begin{proof}
Arguing as  in Lemma \ref{lem_kW}, we obtain (using changes of variables)
\begin{align*}
	\kappa^{\eps,{\rm Wick}}_x(w) 
&=
\int_G a_\eps (z^{-1}x\delta_\eps w^{-1})\bar a_\eps (z^{-1}x) \kappa_z(w) dz\\
&=
\int_G a_\eps (z'\delta_\eps w^{-1})\bar a_\eps (z') \kappa_{ x{z'}^{-1}}(w) dz'\\
&=
\int_G a ( z'\delta_{\sqrt{\eps}} w^{-1})\bar a (z') \kappa_{x\delta_{\sqrt{\eps}}{z'}^{-1}}(w) dz'.
\end{align*}
\end{proof}

\begin{corollary}
\label{cor_OpepsWKN}
	We choose a function $a\in \cD(G)$ that is even, i.e.  $a(x^{-1})=a(x)$, and real valued.  
	Then for any $\sigma\in \cA_0$, there exists $C>0$ such that for all $\eps\in (0,1]$, 
	$$
	\|\Op_\eps(\sigma)-\Op^{\rm Wick}_\eps (\sigma) \|_{\sL(L^2(G))} \leq C\eps.
	$$
\end{corollary}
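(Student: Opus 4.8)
The plan is to combine the preceding lemma, which identifies $\Op^{\rm Wick}_\eps(\sigma)=\Op_\eps(\sigma^{\eps,\rm Wick})$, with the operator-norm estimate of Lemma~\ref{lem_A0norm}, and then to extract a gain of a \emph{full} power of $\eps$ (rather than of $\sqrt\eps$) from a parity argument resting on the evenness of $a$.

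First I would reduce the statement to an estimate on convolution kernels. Since $\Op_\eps$ is linear in the symbol and $\Op_\eps(\tau)=\Op^{\rm KN}(\tau(\cdot,\delta_\eps\cdot))$, where $\tau(\cdot,\delta_\eps\cdot)$ has convolution kernel $y\mapsto \eps^{-Q}\kappa^\tau_x(\delta_\eps^{-1}y)$, Lemma~\ref{lem_A0norm} together with the change of variables $y=\delta_\eps u$ and the $Q$-homogeneity of the Haar measure yields
\begin{equation*}
\|\Op_\eps(\sigma)-\Op^{\rm Wick}_\eps(\sigma)\|_{\sL(L^2(G))}
=\|\Op_\eps(\sigma-\sigma^{\eps,\rm Wick})\|_{\sL(L^2(G))}
\leq \int_G \sup_{x\in G}\left|\kappa_x(w)-\kappa^{\eps,\rm Wick}_x(w)\right|\,dw.
\end{equation*}
It therefore suffices to bound the right-hand side by $C\eps$.

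Next I would Taylor-expand the kernel formula in the variable $s:=\sqrt\eps$. Since $a$ is real, the previous lemma reads $\kappa^{\eps,\rm Wick}_x(w)=\int_G h_s(x,w,z')\,dz'$ with $h_s(x,w,z'):=a(z'\delta_s w^{-1})\,a(z')\,\kappa_{x\delta_s z'^{-1}}(w)$, while $\kappa_x(w)=\int_G h_0(x,w,z')\,dz'$ because $\|a\|_{L^2(G)}=1$. As the coordinates of $\delta_s w^{-1}$ and $\delta_s z'^{-1}$ are polynomial in $s$ (the $j$-th having the form $s^{\upsilon_j}(\cdot)_j$), the map $s\mapsto h_s$ is smooth and only the first-stratum coordinates contribute to $\partial_s h_s|_{s=0}$, giving
\begin{equation*}
\partial_s h_s(x,w,z')\big|_{s=0}
=-\Big(\sum_{j:\,\upsilon_j=1} w_j (X_j a)(z')\Big)a(z')\,\kappa_x(w)
-a(z')^2\sum_{j:\,\upsilon_j=1} z'_j\,\big(X_j\kappa_{\cdot}(w)\big)(x).
\end{equation*}
The heart of the argument is that both terms integrate to zero in $z'$: the first because $\int_G (X_j a)\,a=\tfrac12\int_G X_j(a^2)=0$ by unimodularity, and the second because $\int_G z'_j\,a(z')^2\,dz'=0$, since $a(z')^2$ is invariant under the inversion $z'\mapsto z'^{-1}$ (this is exactly where evenness of $a$ enters) while $z'_j=-(z'^{-1})_j$ is odd. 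Hence $\partial_s\big(\kappa^{\eps,\rm Wick}_x(w)\big)\big|_{s=0}=0$, and Taylor's formula with integral remainder gives
\begin{equation*}
\kappa^{\eps,\rm Wick}_x(w)-\kappa_x(w)
=\eps\int_G\int_0^1(1-t)\,\partial_s^2 h_s\big|_{s=\sqrt\eps\,t}\,dt\,dz'.
\end{equation*}

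Finally I would bound this remainder uniformly in $\eps\in(0,1]$. The factor $a(z')$ confines $z'$ to the compact set $\mathrm{supp}\,a$, and differentiating the polynomial-in-$s$ dilation curves produces factors $s^{\upsilon_j-1},s^{\upsilon_j-2}$ that are bounded on $[0,1]$, together with polynomial factors in $w$ and $z'$ of bounded degree (the $z'$-factors and the BCH corrections being bounded on $\mathrm{supp}\,a$). Since $\kappa\in\cC_c^\infty(G,\cS(G))$, all its $x$-derivatives are Schwartz in $w$ uniformly in the base point, so choosing a large enough Schwartz weight yields $\big|\partial_s^2 h_s\big|\lesssim \mathbf{1}_{z'\in\mathrm{supp}\,a}\,(1+|w|)^{-Q-1}$; integrating in $z'$ over $\mathrm{supp}\,a$ and then in $w$ (using that $(1+|w|)^{-Q-1}$ is integrable on $G$) produces the uniform bound $C\eps$. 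The main obstacle is organizing this weighted Taylor expansion at $s=\sqrt\eps$ so that the potential $O(\sqrt\eps)$ term is seen to cancel by parity and the $O(\eps)$ remainder is controlled uniformly in $x$ and integrably in $w$.
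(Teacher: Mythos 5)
Your proposal is correct and follows essentially the same route as the paper: reduce to the $\cA_0$-kernel norm via Lemma~\ref{lem_A0norm} (whose scale-invariance under $\delta_\eps$ you rightly make explicit), Taylor-expand in $s=\sqrt\eps$, and kill the $O(\sqrt\eps)$ term by exactly the two cancellations the paper uses — $\int_G (X_j a)\,a = 0$ from $a$ being real, and $\int_G z'_j\,a(z')^2\,dz' = 0$ from $a$ being even. The only difference is organizational: the paper first splits $\kappa_x - \kappa^{\eps,\rm Wick}_x$ into the two integrals $I_1(\eps)$ and $I_2(\eps)$ and expands each, whereas you expand the full integrand $h_s$ at once; the resulting first-order terms and their cancellations are identical.
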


\begin{proof}
By Lemma \ref{lem_A0norm}, using the $\cA_0$-norm defined in \eqref{eq_A0norm}, we have
$$
\|\Op_\eps(\sigma)-\Op^{\rm Wick}_\eps (\sigma) \|_{\sL(L^2(G))}
\leq \| \sigma - \sigma^{\eps,{\rm Wick}} \|_{\cA_0}
\leq I_1(\eps) +I_2(\eps), 
$$	
where
\begin{align*}
	I_1(\eps)&:= \int_G \sup_{x\in G}\left|\int_G |a(z)|^2 \left(\kappa_x(w) -\kappa_{x\delta_{\sqrt{\eps}}{z}^{-1}}(w) \right)dz \right| dw , \\
I_2(\eps)&:= \int_G \sup_{x\in G}\left|\int_G (a(z)-a(z\delta_{\sqrt \eps} w^{-1}))\bar a(z)  \kappa_{x\delta_{\sqrt{\eps}}{z}^{-1}}(w) dz \right| dw.
\end{align*}
By the Taylor estimates due to Folland and Stein \cite{folland+stein_82,FR}, if $\upsilon_1=1$, we have:
\begin{align*}
	I_1(\eps)&= \sqrt \eps   \int_G \sup_{x\in G}\left| 
\sum_{j=1}^{n_1} 
 \int_G (-z_j) |a(z)|^2 dz \ 
	    X_{j,x}\kappa_x(w)   \right|dw + O(\eps), \\
I_2(\eps)&=\sqrt \eps \int_G \sup_{x\in G}\left|
\sum_{j=1}^{n_1} 
\int_G   (- w_j ) \bar a(z)  X_j a(z)  \kappa_{x }(w) dz \right| dw +O(\eps)\\
&\leq \sqrt \eps 
\sum_{j=1}^{n_1}\left|
\int_G  
\bar a(z) X_ja (z)   dz \right|  \int_G |w_j| \sup_{x'\in G}\left| \kappa_{x'}(w)\right|  dw +O(\eps).
\end{align*}
We recall that $n_1$ denotes the dimension of the first strata (see paragraph~\ref{sec:adap_basis} where the  basis $(X_j)_{1\leq j\leq n}$ has been introduced). 
As $a$ is even, for any polynomial $q$ satisfying $q(z^{-1})=-q(z)$ such as the coordinate polynomials $z_j$,
we have
$\int_G |a(z)|^2 q(z) dz=0$.
As $a$ is real valued, for any left or right invariant vector field $X$, an integration by parts shows 
 $ \int_G X_j a(z) \bar a(z) dz =0$. 
Consequently, $I_1(\eps)=O(\eps)$ and $I_2(\eps)=O(\eps)$ if $\upsilon_1=1$. Moreover, if 
  $\upsilon_1>1$, then the Taylor estimate gives $I_1(\eps)+I_2(\eps)=O(\eps^{\frac{\upsilon_1}2})=O(\eps)$.
\end{proof}

\subsection{Proof of the semi-classical G\aa rding inequality}
Let $\sigma\in\mathcal A_0$ with $\sigma\geq 0$. 
By the properties of the semi-classical Wick quantisation, then 
$$
\left( \Op^{{\rm Wick}}_\eps(\sigma )f,f\right)_{L^2(G)} 
=(\sigma \cB^\eps f,\cB^\eps f)_{L^2(G\times \Ghat)} \geq 0.
$$
We write
\begin{align*}
\Re \left( \Op_\eps(\sigma )f,f\right)_{L^2(G)}
&\geq 
\left( \Op^{{\rm Wick}}_\eps(\sigma )f,f\right)_{L^2(G)}
- \|\Op_\eps(\sigma)-\Op^{\rm Wick}_\eps (\sigma) \|_{\sL(L^2(G))}\|f\|_{L^2(G)}^2\\
&\geq - \|\Op_\eps(\sigma)-\Op^{\rm Wick}_\eps (\sigma) \|_{\sL(L^2(G))}\|f\|_{L^2(G)}^2.
\end{align*}
By Corollary \ref{cor_OpepsWKN}, $\|\Op_\eps^{KN}(\sigma)-\Op^{\rm Wick}_\eps (\sigma) \|_{\sL(L^2(G))}=O(\eps)$.
This concludes the proof of Theorem~\ref{thm:garding_sc}.
\smallskip 

We point out that  the semi-classical case is more straightforward because we restrict ourselves to the use of $L^2$-norm and to a gain in the semi-classical parameter $\eps$. 
Moreover, we do not need a strong ellipticity assumption on the symbol $\sigma$ and its positivity is enough to conclude. This is  specific to the semi-classical setting.

\appendix

\section{The Euclidean case}
\label{app}

In this section, we recall the definitions and some properties of the 
Kohn-Nirenberg and Wick quantizations in the Euclidean case $\bR^n$. 
We develop the same chain of arguments  that show  the G\aa rding inequality  for the H\"ormander calculus on $\bR^n$  as in the core of the paper.
This leads  to a proof which is close to the one of  \cite[Chapter 2, section 6]{folland}, while leading to a weaker result.

\subsection{Kohn-Nirenberg and Wick quantizations}

On $\bR^n$, the Kohn-Nirenberg quantization may be defined for any symbols $\sigma\in \cS'(\bR^n\times \bR^n)$ via
the formula 
$$
\Op^{\rm KN} (\sigma)f (x)
=\int_{\bR^n} e^{2i\pi x\xi } \sigma(x,\xi)\, \widehat f (\xi) \, d\xi, \qquad x\in \bR^n,\, f\in \cS(\bR^n), 
$$
where $\widehat f = \cF f$ denotes the Euclidean Fourier transform of $f$: 
$$
\cF f(\xi)=
\widehat f(\xi) = \int_{\bR^n} e^{-2i\pi x\xi }f(x)dx, \qquad \xi\in \bR^n.
$$
With the \emph{convolution kernel} $\kappa_x := \cF^{-1}\sigma(x,\cdot\,)$ of $\sigma$, this may be rewritten as 
 $$
\Op^{\rm KN} (\sigma)f (x)
=f*\kappa_x(x), \qquad x\in \bR^n,\, f\in \cS(\bR^n).
$$

Fixing a continuous, bounded and square-integrable function $a$ with $\|a\|_{L^2(\bR^n)}=1$, we 
set for any $f\in L^2(\bR^n)$ and $(x,\xi)\in \bR^n$
$$
\cB_a [f] (x,\xi) := \cF (f \, a (\cdot -x)) (\xi)
= \int_{\bR^n} f(y)\, a(y-x) \, e^{-2i\pi y\xi } dy.
$$
This defines the generalised Bargmann transform $\cB_a = \cB$. The function $a$ is usually chosen as the Gaussian function $a(x)=\pi^{-\frac d4}{\rm e}^{-\frac{|x|^2}{2}} $ \cite{corobook}.  It is an isometry $L^2(\bR^n)\to L^2(\bR^n\times \bR^n)$.
Denoting by $\cB^*$ its adjoint, we define the Wick quantization $\Op^{\rm Wick} = \Op^{{\rm Wick},a}, $ for any symbol $\sigma\in L^\infty (\bR^n\times \bR^n)$ via:
$$
\Op^{\rm Wick} (\sigma) f= \cB^* \sigma \cB[f], 
\qquad f\in L^2(\bR^n).
$$
This quantization has the advantage of yielding bounded operators on $L^2$, of preserving self-adjointness:
$$
\|\Op^{\rm Wick} (\sigma)\|_{\sL(L^2(\bR^n)}
\leq \|\sigma\|_{L^\infty(\bR^n\times \bR^n)},
\qquad
\Op^{\rm Wick} (\sigma)^*=\Op^{\rm Wick} (\bar \sigma),
$$
and  positivity:
$$
\sigma(x,\xi)\geq 0 \ \mbox{for all}\ (x,\xi)\in \bR^n\times\bR^n \Longrightarrow
(\Op^{\rm Wick} (\sigma)f,f)_{L^2(\bR^n)}\geq 0.
$$

The link between the Kohn-Nirenberg and Wick quantization for a bounded symbol is the following:
\begin{lemma}
\label{lem_kappaWRn}
Let $a\in \cS(\bR^n)$ with $\|a\|_{L^2(\bR^n)}=1$, and consider the associated Wick quantization.
For any symbol $\sigma\in L^\infty(\bR^n\times \bR^n)$, we have: 
$$
\Op^{\rm Wick} (\sigma) f (x) = f*\kappa^{\rm Wick}_x(x),\quad f\in \cS(\bR^n), \ x\in \bR^n, 
$$	
where $\kappa^{\rm Wick}\in \cS'(\bR^n\times \bR^n)$ is given by 
$$
\kappa_x^{\rm Wick}(y) = \int_{\bR^n} a(z-y) \bar a(z) \kappa_{x-z}(y) dz,
$$
where $\kappa_x =\cF^{-1}\sigma(x,\cdot)$ denotes the convolution kernel of $\sigma$. 
Hence, $\Op^{\rm Wick}(\sigma) =\Op^{\rm KN}(\sigma^{\rm Wick})$ where $\sigma^{\rm Wick}\in \cS'(\bR^n\times \bR^n)$ is the symbol given by $\sigma^{\rm Wick}(x,\xi)=\cF \kappa^{\rm Wick}_x(\xi)$. 
\end{lemma}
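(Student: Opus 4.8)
The plan is to transcribe the proof of Lemma~\ref{lem_kW} to the Euclidean setting, the only change of substance being that the group Fourier transform and its reversal rule \eqref{eq_cFf1*f2} are replaced by the Euclidean Fourier transform, for which convolution becomes an honest commutative product, $\cF(g_1*g_2)=\cF g_1\cdot\cF g_2$. First I would record the explicit form of the adjoint $\cB^*$, i.e.\ the Euclidean analogue of \eqref{eq:2.75}. Expanding $(\cB^*[\tau],f)_{L^2}=(\tau,\cB[f])_{L^2(\bR^n\times\bR^n)}$ using $\cB[f](t,\xi)=\int_{\bR^n}f(y)a(y-t)e^{-2i\pi y\xi}dy$ and carrying out the $\xi$-integration shows that whenever $\tau(t,\xi)=\cF\lambda_t(\xi)$ for a kernel $\lambda$, one has
$$
\cB^*[\tau](x)=\int_{\bR^n}\lambda_t(x)\,\bar a(x-t)\,dt .
$$

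Next I would compute $\sigma\cB[f]$. Since $\cB[f](t,\xi)=\cF\big(f\,a(\cdot-t)\big)(\xi)$ and $\sigma(t,\xi)=\cF\kappa_t(\xi)$, the convolution theorem gives
$$
\sigma(t,\xi)\,\cB[f](t,\xi)=\cF\big(\kappa_t*(f\,a(\cdot-t))\big)(\xi),
$$
so that $\sigma\cB[f]$ is of the form $\cF\lambda_t$ with $\lambda_t=\kappa_t*(f\,a(\cdot-t))$. Applying the adjoint formula of the previous step and expanding the inner convolution yields
$$
\Op^{\rm Wick}(\sigma)f(x)=\int_{\bR^n}\bar a(x-t)\int_{\bR^n}\kappa_t(x-w)\,f(w)\,a(w-t)\,dw\,dt .
$$

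Finally I would exchange the order of integration and read off the kernel: writing the result as $\int_{\bR^n}f(w)\,\kappa^{\rm Wick}_x(x-w)\,dw=(f*\kappa^{\rm Wick}_x)(x)$, one identifies $\kappa^{\rm Wick}_x(v)=\int_{\bR^n}\kappa_t(v)\,a(v+ w - t)\,\bar a(x-t)\,dt$ with $w=x-v$, and the substitution $z=x-t$ (so $t=x-z$ and $w-t=z-v$) turns this into $\kappa^{\rm Wick}_x(v)=\int_{\bR^n}a(z-v)\,\bar a(z)\,\kappa_{x-z}(v)\,dz$, which is exactly the asserted formula. The final identity $\Op^{\rm Wick}(\sigma)=\Op^{\rm KN}(\sigma^{\rm Wick})$ is then immediate from $\sigma^{\rm Wick}=\cF\kappa^{\rm Wick}$ and the definition of $\Op^{\rm KN}$. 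The only point requiring genuine care is the regularity and convergence bookkeeping: for $\sigma\in L^\infty$ the kernel $\kappa_t$ is merely a tempered distribution, so I would interpret $\kappa_t*(f\,a(\cdot-t))$ as the convolution of a tempered distribution with a Schwartz function, hence a smooth function of at most polynomial growth, and verify that the outer $t$-integral against the Schwartz weight $\bar a(x-t)$ converges absolutely. This is the Euclidean counterpart of the a priori bound on $\kappa^{\rm Wick}$ proved at the start of Lemma~\ref{lem_kW}, and it is where I expect the main (if mild) technical effort to lie, since it is what justifies the use of Fubini above.
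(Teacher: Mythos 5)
Your proposal is correct and follows essentially the same route as the paper: Lemma~\ref{lem_kappaWRn} is stated in the appendix as the Euclidean transcription of Lemma~\ref{lem_kW}, and your argument reproduces exactly the proof of that lemma (compute $\sigma\cB[f]$ via the convolution theorem, apply the explicit adjoint formula, i.e.\ the analogue of \eqref{eq:2.75}, then read off the convolution kernel), with the distributional bookkeeping for $\sigma\in L^\infty$ correctly flagged as the only extra technical point. The only blemish is a harmless slip in your intermediate identification, where $a(v+w-t)$ should read $a(w-t)$; your own substitution $w-t=z-v$ already treats it correctly, so the final formula is unaffected.
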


\subsection{G\aa rding inequalities for H\"ormander symbols}
First, let us recall the definition of the H\"ormander classes of symbols.
\begin{definition}
	A function $\sigma\in C^\infty(\bR^n\times \bR^n)$ is a H\"ormander symbol of order $m\in \bR$ and index $(\rho,\delta)$
	when $$
\forall \alpha,\beta\in \bN_0^n, \quad\exists C_{\alpha,\beta}>0,
\qquad 
\forall (x,\xi)\in \bR^n\times\bR^n,\qquad 
|\partial_x^\beta\partial_\xi^\alpha \sigma(x,\xi) |
\leq 
C_{\alpha,\beta} (1+|\xi|^2)^{\frac {m-\rho|\alpha|+\delta|\beta|}2}.
$$
The space of H\"ormander symbols of order $m$ and index $(\rho,\delta)$ is denoted by $S^m_{\rho,\delta}(\bR^n)$.
\end{definition}
The space $S^m_{\rho,\delta}(\bR^n)$ is naturally equipped with a structure of Fr\'echet space, inherited by the resulting class of operators
 $\Psi_{\rho,\delta}^m(\bR^n):=\Op^{\rm KN}(S_{\rho,\delta}^m(\bR^n))$. Moreover, 
the H\"ormander calculus $\cup_{m\in \bR} \Psi_{\rho,\delta}^m(\bR^n)$ is a calculus in the sense of Definition \ref{def_pseudo-diff_calculus}. In this context, 
 the link between the Kohn-Nirenberg and Wick quantizations is given in Part (2) of the following statement.
 
\begin{proposition}
\label{prop_OpWRn}
Let $1\geq \rho\geq \delta\geq 0$ with $\delta\neq 1$.
\begin{enumerate}
\item Let $m\in \bR$. 
If $\varphi\in \cS(\bR^n)$ and  $\sigma\in S_{\rho,\delta}^m(\bR^n)$
then 
$$
\sigma*\varphi:(x,\xi) \longmapsto \int_{\bR^n} \varphi(z)\sigma(x-z,\xi) = \sigma*\varphi (\, \cdot\, ,\xi)\, (x), 
$$
defines a symbol in $S_{\rho,\delta}^m(\bR^n)$.
\item 
We assume that $a\in \cS(\bR^n)$ with $\|a\|_{L^2(\bR^n)}=1$. 
If  $\sigma\in S_{\rho,\delta}^0(\bR^n)$, then 
$$
	\Op^{\rm Wick}(\sigma) - \Op^{\rm KN}(\sigma*|a|^2) \in \Psi_{\rho,\delta}^{-(\rho-\delta)}(\bR^n).
	$$
 \end{enumerate}
\end{proposition}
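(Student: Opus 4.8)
The plan is to establish Part (1) first, since it is the elementary ingredient on which Part (2) rests, and then to reduce Part (2) to a single symbol estimate handled by a phase-space Taylor expansion. For Part (1) I would differentiate under the integral sign, placing all derivatives on $\sigma$:
\[
\partial_x^\beta\partial_\xi^\alpha(\sigma*\varphi)(x,\xi)=\int_{\bR^n}\varphi(z)\,(\partial_x^\beta\partial_\xi^\alpha\sigma)(x-z,\xi)\,dz.
\]
Since the symbol estimate for $\sigma\in S^m_{\rho,\delta}(\bR^n)$ is uniform in the space variable, the integrand is bounded by $C_{\alpha,\beta}(1+|\xi|^2)^{(m-\rho|\alpha|+\delta|\beta|)/2}$, and integrating out $\int|\varphi(z)|\,dz=\|\varphi\|_{L^1}$ gives $\sigma*\varphi\in S^m_{\rho,\delta}(\bR^n)$, with each seminorm bounded by the corresponding seminorm of $\sigma$ times $\|\varphi\|_{L^1}$. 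This is routine.

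For Part (2), I would first use Lemma~\ref{lem_kappaWRn} to write $\Op^{\rm Wick}(\sigma)=\Op^{\rm KN}(\sigma^{\rm Wick})$, so that, by continuity of $\Op^{\rm KN}$ on the symbol classes, it suffices to prove $\sigma^{\rm Wick}-\sigma*|a|^2\in S^{-\rho}_{\rho,\delta}(\bR^n)$, which is contained in $S^{-(\rho-\delta)}_{\rho,\delta}(\bR^n)$ because $\delta\ge 0$. Taking the $y$-Fourier transform of the kernel formula of Lemma~\ref{lem_kappaWRn} and performing the changes of variables $u=x-z$ and $\zeta=\xi-\eta$, a direct computation turns $\sigma^{\rm Wick}$ into a phase-space convolution
\[
\sigma^{\rm Wick}(x,\xi)=\int_{\bR^n}\int_{\bR^n} K(z,\zeta)\,\sigma(x-z,\xi-\zeta)\,dz\,d\zeta,\qquad K(z,\zeta)=\bar a(z)\,\widehat a(-\zeta)\,e^{-2i\pi z\zeta},
\]
where $K\in\cS(\bR^n\times\bR^n)$ since $a,\widehat a\in\cS(\bR^n)$. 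The two algebraic facts I would extract are $\int K(z,\zeta)\,d\zeta=|a(z)|^2$ and $\int\zeta_j K(z,\zeta)\,d\zeta=\tfrac{i}{2\pi}\bar a(z)\partial_j a(z)$, both obtained by recognising the $\zeta$-integral as an inverse Fourier transform evaluated at $z$.

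The first identity shows that $\sigma*|a|^2$ is precisely the contribution of $\sigma(x-z,\xi)$, so that
\[
\sigma^{\rm Wick}(x,\xi)-(\sigma*|a|^2)(x,\xi)=\int_{\bR^n}\int_{\bR^n} K(z,\zeta)\big(\sigma(x-z,\xi-\zeta)-\sigma(x-z,\xi)\big)\,dz\,d\zeta.
\]
I would then Taylor expand $\sigma(x-z,\xi-\zeta)-\sigma(x-z,\xi)$ in $\zeta$ to first order. By the second identity, the leading term is a finite sum of $x$-convolutions of $\partial_{\xi_j}\sigma\in S^{m-\rho}_{\rho,\delta}(\bR^n)$ against the Schwartz weights $\tfrac{i}{2\pi}\bar a\,\partial_j a$; by Part (1) each such convolution stays in $S^{m-\rho}_{\rho,\delta}(\bR^n)$, which for $m=0$ is $S^{-\rho}_{\rho,\delta}(\bR^n)$. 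The second-order remainder carries two factors of $\zeta$, hence two $\xi$-derivatives of $\sigma$ (order $m-2\rho$), and is even more regular. This yields the claimed membership, and applying $\Op^{\rm KN}$ together with the continuity of the calculus gives the operator statement.

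The main obstacle is the rigorous, uniform control of the Taylor remainder. Writing it in integral form $\sum_{j,k}\zeta_j\zeta_k\int_0^1(1-t)\,\partial_{\xi_j}\partial_{\xi_k}\sigma(x-z,\xi-t\zeta)\,dt$, one evaluates $\partial_\xi^2\sigma$ at the shifted point $\xi-t\zeta$, so a Peetre-type inequality is needed to restore powers of $(1+|\xi|)$ uniformly in $t\in[0,1]$ and $\zeta$; the resulting growth in $(1+|\zeta|)$ is absorbed by $\zeta_j\zeta_k K(z,\zeta)$, which is Schwartz in $\zeta$, while the Schwartz decay of $K$ in $z$ ensures absolute convergence of the $z$-integral and lets Part (1) handle the $x$-convolution structure. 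Checking these uniform bounds — and justifying the Fubini interchange and changes of variables that produce the phase-space convolution formula for $\sigma\in S^0_{\rho,\delta}(\bR^n)$ and $a\in\cS(\bR^n)$ — is the technical heart of the argument.
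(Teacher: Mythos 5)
Your proposal is correct and follows essentially the same route as the paper: your kernel $K(z,\zeta)=\bar a(z)\,\widehat a(-\zeta)\,e^{-2i\pi z\zeta}$ is exactly the Fourier transform $\widehat{q_z}$ of the paper's $q_z(w)=a(z-w)\bar a(z)$, so your phase-space convolution formula is the paper's identity $\sigma^{\rm Wick}(x,\xi)=\int_{\bR^n}\Delta_{q_z}\sigma(x-z,\xi)\,dz$, and your Taylor expansion with the moment identities $\int K\,d\zeta=|a|^2$ and $\int\zeta_j K\,d\zeta=\tfrac{i}{2\pi}\bar a\,\partial_j a$ is precisely the asymptotic expansion $\Delta_q\sigma\sim q(0)\sigma+\sum_{\alpha>0}c_\alpha\partial_x^\alpha q(0)\partial_\xi^\alpha\sigma$ that the paper invokes to conclude. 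You even obtain the slightly stronger remainder class $S^{-\rho}_{\rho,\delta}(\bR^n)\subset S^{-(\rho-\delta)}_{\rho,\delta}(\bR^n)$, which the paper's sketch implicitly yields as well.
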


\begin{proof}[Sketch of the proof of Proposition \ref{prop_OpWRn}]
Part (1) is easily checked. For Part (2),
we may rephrase Lemma \ref{lem_kappaWRn}  using the notion of difference operators $\Delta_q$ 
(which coincide with $\frac{i}{2\pi}\partial_{\xi_j}$ when $q=x_j$) 
defined formally via:
$$
(\Delta_q \sigma )(x,\xi) = \cF \left(q \cF^{-1} \sigma(x,\cdot)\right)(\xi) = \cF (q \kappa_x)(\xi) = \sigma(x ,\xi)*\widehat q, 
$$
with $\kappa_x =\cF^{-1}\sigma(x,\cdot)$ the convolution kernel of $\sigma$. We have:
$$
\Op^{\rm Wick}(\sigma) = \Op^{\rm KN}(\sigma^{\rm Wick}), 
\quad\mbox{with} \quad
\sigma^{\rm Wick} (x,\xi)= \int_{\bR^n} \Delta_{q_z} \sigma (x-z,\xi) dz,
$$
where $q_z(w) = a(z-w)\bar a(z)$.
We then conclude with the following asymptotic expansion in any symbol class $S^m_{\rho,\delta}(\bR^n)$  for a difference operator associated with $q\in \cS(\bR^n)$ 
$$
\Delta_q \sigma \ \sim \ q(0)  \sigma +  \sum_{\alpha>0} c_\alpha \partial^\alpha_x q(0) \partial_\xi^\alpha \sigma,
$$
with explicit coefficients $c_\alpha$.
\end{proof}

The properties  of the  Kohn-Nirenberg and Wick quantizations  imply the following G\aa rding inequality:
\begin{theorem}
\label{thm_gardingRn}
Let $m\in \bR$ and $1\geq \rho>\delta\geq 0$.
	Assume that the symbol $\sigma\in S_{\rho,\delta}^{m}(\bR^n)$ satisfies the elliptic condition $\sigma(x,\xi) \geq c (1  +|\xi|^2)^{m/2}$, 
	for some constant $c>0$.
	Then there exists a constant $C>0$ such that 
	\begin{equation}\label{eq:mickey}
	\forall f\in \cS(\bR^n),\qquad 
	\Re \left (\Op^{\rm KN}(\sigma) f, f\right )_{L^2(\bR^n)} \geq -C \|f\|_{H^{\frac{m-(\rho-\delta)}2}(\bR^n)}^2 .
\end{equation} 
\end{theorem}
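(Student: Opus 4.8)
The plan is to mirror the strategy of Section~\ref{subsec_pfGarding_comp}, reducing to a symbol of order $0$ and then using the positivity built into $\Op^{\rm Wick}$ together with the comparison of Proposition~\ref{prop_OpWRn}. First I would reduce to a real, self-adjoint symbol: since $\Re(\Op^{\rm KN}(\sigma)f,f)_{L^2}=\tfrac12((\Op^{\rm KN}(\sigma)+\Op^{\rm KN}(\sigma)^*)f,f)_{L^2}$ and the adjoint rule of the H\"ormander calculus gives $\Op^{\rm KN}(\sigma)^*=\Op^{\rm KN}(\bar\sigma)+R_1$ with $R_1\in\Psi_{\rho,\delta}^{m-(\rho-\delta)}(\bR^n)$, the antisymmetric part contributes only a term $\le C\|f\|_{H^{(m-(\rho-\delta))/2}}^2$, which is already of the allowed form; after this step $\Re\sigma\ge c(1+|\xi|^2)^{m/2}$ is real. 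Next I would pass to order $0$ by conjugating with $\langle D\rangle^{m/2}:=\Op^{\rm KN}((1+|\xi|^2)^{m/2})$: using the composition rule twice one writes $\Op^{\rm KN}(\sigma)=\langle D\rangle^{m/2}(\Op^{\rm KN}(\sigma_0)+E)\langle D\rangle^{m/2}$ with $\sigma_0:=(1+|\xi|^2)^{-m/2}\sigma\in S_{\rho,\delta}^0(\bR^n)$ satisfying $\sigma_0\ge c$, and $E\in\Psi_{\rho,\delta}^{-(\rho-\delta)}(\bR^n)$. Setting $g:=\langle D\rangle^{m/2}f$ and using $\|g\|_{H^{-(\rho-\delta)/2}}=\|f\|_{H^{(m-(\rho-\delta))/2}}$, it then suffices to prove $\Re(\Op^{\rm KN}(\sigma_0)g,g)_{L^2}\ge -C\|g\|_{H^{-(\rho-\delta)/2}}^2$.

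For the order-$0$ case I would exploit the Wick positivity. Since $\sigma_0\ge c$ and $\Op^{\rm Wick}(\id)=\id_{L^2}$, the symbol $\sigma_0-c$ is non-negative, so positivity of $\Op^{\rm Wick}$ yields $(\Op^{\rm Wick}(\sigma_0)g,g)_{L^2}\ge c\|g\|_{L^2}^2$. On the other hand Proposition~\ref{prop_OpWRn}(2) gives $\Op^{\rm Wick}(\sigma_0)=\Op^{\rm KN}(\sigma_0*|a|^2)+R_2$ with $R_2\in\Psi_{\rho,\delta}^{-(\rho-\delta)}(\bR^n)$. Combining the two, $\Re(\Op^{\rm KN}(\sigma_0*|a|^2)g,g)_{L^2}\ge c\|g\|_{L^2}^2-C\|g\|_{H^{-(\rho-\delta)/2}}^2$, so the desired estimate holds for the mollified symbol $\sigma_0*|a|^2$.

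The remaining, and main, difficulty is to return from $\sigma_0*|a|^2$ to $\sigma_0$. The difference $\sigma_0-\sigma_0*|a|^2$ belongs to $S_{\rho,\delta}^0(\bR^n)$ by Proposition~\ref{prop_OpWRn}(1), but, exactly as in the compact case (Lemma~\ref{lem_convSm}), replacing $a$ by a mollifying family $a_t$ makes it small only in the \emph{weaker} class $S_{\rho,\delta}^\delta(\bR^n)$; thus its quadratic form is bounded by $\eta\|g\|_{H^{\delta/2}}^2$ with $\eta\to0$. When $\delta=0$ this is harmless: the term is $\eta\|g\|_{L^2}^2$ and is absorbed into $c\|g\|_{L^2}^2$ on taking $\eta<c$, which already yields the stated weak G\aa rding inequality \eqref{eq:mickey}. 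For $\delta>0$, however, $H^{\delta/2}$ is strictly stronger than $L^2$ and cannot be absorbed into the elliptic gain, so the bare Wick comparison produces only the $(\rho,\delta)$-estimate with a spurious $\eta\|g\|_{H^{\delta/2}}^2$ term, exactly as in Theorem~\ref{thm_Gardingcomp_rhodelta}. To recover the clean conclusion for all $\delta$, I would use the Wick positivity only as motivation and realise it through a symbolic square root: since $\sigma_0\ge c>0$ is a real, order-$0$ elliptic symbol, $b:=\sqrt{\sigma_0}\in S_{\rho,\delta}^0(\bR^n)$, and the composition and adjoint rules give $\Op^{\rm KN}(b)^*\Op^{\rm KN}(b)=\Op^{\rm KN}(\sigma_0)+R_3$ with $R_3\in\Psi_{\rho,\delta}^{-(\rho-\delta)}(\bR^n)$. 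Then $\Re(\Op^{\rm KN}(\sigma_0)g,g)_{L^2}=\|\Op^{\rm KN}(b)g\|_{L^2}^2-\Re(R_3g,g)_{L^2}\ge -C\|g\|_{H^{-(\rho-\delta)/2}}^2$, which after undoing the two reductions is precisely \eqref{eq:mickey}. The crux is therefore the order-$\delta$ mollification error; the square-root device is what turns the positivity made manifest by $\Op^{\rm Wick}$ into a bound with a gain of exactly $\rho-\delta$ derivatives and no leftover top-order term, consistent with this being weaker than the sharp G\aa rding inequality.
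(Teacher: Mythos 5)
Your proof is correct, but it parts ways with the paper's own argument exactly at the decisive step, and the difference is worth recording. The paper's proof stays with the Wick quantization to the end: after reducing to $m=0$, it establishes \eqref{averell} and then argues that choosing $a=a_t$, $a_t(x)=t^{-n/2}a_1(t^{-1}x)$, with $t$ small enough makes $\|\Op^{\rm KN}(\sigma-\sigma*|a_t|^2)\|_{\sL(L^2(\bR^n))}\le c$, the operator norm being controlled by $S^0_{\rho,\delta}$ semi-norms of $\sigma-\sigma*|a_t|^2$. You instead use the Wick comparison (Proposition \ref{prop_OpWRn}) only for the mollified symbol and finish with the classical square-root device, $b=\sqrt{\sigma_0}\in S^0_{\rho,\delta}(\bR^n)$ and $\Op^{\rm KN}(b)^*\Op^{\rm KN}(b)=\Op^{\rm KN}(\sigma_0)+R_3$ with $R_3\in\Psi^{-(\rho-\delta)}_{\rho,\delta}(\bR^n)$. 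Your reason for switching is correct and important: when $\delta>0$ the $x$-derivatives of an order-zero symbol may grow like $(1+|\xi|^2)^{\delta/2}$, so the modulus of continuity in $x$ is not uniform in $\xi$, and the $S^0_{\rho,\delta}$ semi-norms of $\sigma-\sigma*|a_t|^2$ need not tend to $0$ as $t\to 0$ (already the sup-norm fails on symbols of the form $\sum_j\chi(2^{j\delta}(x-x_j))\psi(2^{-j}\xi)$ with $\psi$ supported in a dyadic annulus and $x_j$ spreading out); the error is small only as a symbol of order $\delta$, the Euclidean analogue of Lemma \ref{lem_convSm} (2), which produces the spurious term $\eta\|f\|^2_{H^{\delta/2}}$ of Theorem \ref{thm_Gardingcomp_rhodelta} rather than \eqref{eq:mickey}. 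Thus the paper's sketch, as written, establishes Theorem \ref{thm_gardingRn} only for $\delta=0$ (consistently with the limitation acknowledged in Remark \ref{rem:pluto} (2)), whereas your square-root argument delivers the stated clean gain of $\rho-\delta$ derivatives on the whole range $0\le\delta<\rho\le 1$. What the paper's route buys is what the paper is really after: it involves no square roots of symbols, hence transfers to compact and graded groups, where symbols are operator-valued and stability of the classes $S^m_{\rho,\delta}$ under taking square roots is a genuinely delicate question; it also isolates the part of the argument where plain non-negativity suffices. What your route buys, in the scalar Euclidean setting, is the full $(\rho,\delta)$ statement with no leftover top-order term --- at the price of using the ellipticity $\sigma_0\ge c>0$ in an essential way, which is consistent with the fact that under mere non-negativity (sharp G\aa rding) other tools are required.

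Two minor corrections. First, with $\sigma_0=(1+|\xi|^2)^{-m/2}\sigma$, the conjugating operator must be $\Op^{\rm KN}((1+|\xi|^2)^{m/4})$, whereas your $\langle D\rangle^{m/2}$ is defined as $\Op^{\rm KN}((1+|\xi|^2)^{m/2})$; the exponents are off by a factor of $2$. Second, the preliminary symmetrisation is vacuous here, since the hypothesis $\sigma\ge c(1+|\xi|^2)^{m/2}$ already presumes $\sigma$ real-valued.
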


Note that the so-called {\it sharp G\aa rding inequality} states that~\eqref{eq:mickey} holds under the weaker assumption $\sigma \geq 0$~\cite{folland}. The inequality of Theorem \ref{thm_gardingRn} is weaker, though useful and easy to prove.

\begin{proof}[Sketch of the proof of Theorem \ref{thm_gardingRn}]
It suffices to show the case $m=0$.
Let $\sigma\in S_{\rho,\delta}^0(\bR^n)$ satisfying $\sigma(x,\xi)\geq c$.
The properties  of the Wick quantization
(especially preserving positivity and Proposition \ref{prop_OpWRn}) imply
\begin{align}\label{averell}
	\Re (\Op^{\rm KN}(\sigma) f, f)_{L^2(\bR^n)}
&\geq  c \|f\|^2_{L^2(\bR^n)} 
- \|\Op^{\rm KN} (\sigma -  \sigma*|a|^2 )\|_{\sL(L^2(\bR^n))} \|f\|^2_{L^2(\bR^n)}
 -  C
\|f\|^2_{H^{\frac{\rho-\delta}2}(\bR^n)},
\end{align}
for some constant $C>0$. 
The operator $\Op^{\rm KN} (\sigma -  \sigma*|a|^2 )$ is bounded on $L^2(\bR^n)$ with operator norm estimated by a semi-norm in $\sigma - \sigma*|a|^2$. We may write this as:
$$
\|\Op^{\rm KN} (\sigma - \sigma*|a|^2)\|_{\sL(L^2(\bR^n))}
\leq C_1 \max_{\substack{|\alpha|\leq a_0\\ |\beta|\leq b_0}} \sup_{x,\xi\in \bR^n}
(1+|\xi|^2)^{-\frac{\delta|\beta|-\rho|\alpha|}2} 
\left|\partial_\xi^\alpha\partial_x^\beta
    (\sigma -  \sigma*|a|^2)(x,\xi)\right|,
$$
for some $a_0,b_0\in \bN$ and $C_1>0$.
We observe that the convolution above is in the variable $x$ only, so that denoting 
$\sigma_{\alpha,\beta}:= (1+|\xi|^2)^{-\frac{\delta|\beta|-\rho|\alpha|}2} 
\ \partial_\xi^\alpha\partial_x^\beta \sigma$, we have:
$$
(1+|\xi|^2)^{-\frac{\delta|\beta|-\rho|\alpha|}2} 
\ \partial_\xi^\alpha\partial_x^\beta
    (\sigma -  \sigma*|a|^2)
    =
\sigma_{\alpha,\beta} -  \sigma_{\alpha,\beta}*|a|^2 .
$$
Hence a judicious choice of $a$ in relation with an approximation of the identity will allow us to conclude. For this, we fix  a  function $a_1\in \cS(\bR^n)$ with $\|a_1\|_{L^2(\bR^n)}=1$
and set $a_t(x) :=t^{-n/2} a_1(t^{-1} x)$. 
We observe that $|a_t|^2 = t^{-n}|a_1 (t^{-1}\cdot)|^2$, $t>0$
 is an approximation of the identity.
We then choose $a=a_t$ with $t>0$ small enough so that  the right-hand side in~\eqref{averell} is $\leq c$.
\end{proof}


\begin{thebibliography}{99}

\bibitem{AG}
Serge Alinhac \& Patrick G\'erard. 
Pseudodifferential operators and the Nash-Moser theorem. 
{\it Graduate Studies in Mathematics} {82}. 
American Mathematical Society, Providence, RI, 
2007.




\bibitem{bouclet}
Jean-Marc Bouclet.
\emph{The semiclassical Garding inequality}.
Online lecture notes available at \url{https://www.math.univ-toulouse.fr/~bouclet/Notes-de-cours-exo-exam/M2/Garding.pdf}

 \bibitem{CDR}
 {Duv\'an Cardona, Julio Delgado and Michael Ruzhansky}.
 Analytic functional calculus and Gårding inequality on graded Lie groups with applications to diffusion equations, 
 Arxiv:2111.07469.


 \bibitem{CFR}
 {Duv\'an Cardona, Serena Federico and Michael Ruzhansky}.
 Subelliptic sharp Gårding inequality on compact Lie groups.
 Arxiv:2110.00838

\bibitem{corobook} Monique Combescure and Didier Robert.  
Coherent states and applications in mathematical physics. 
{\it Theoretical and Mathematical Physics}. 
Springer, Dordrecht, 2nd Edition,
2021. 

\bibitem{CG90}
{Laurence Corwin \& Frederick P. Greenleaf}. 
{Representations of nilpotent Lie groups and their applications. Part~I.},
\textit{Cambridge Stud. Adv. Math.}, {18},
{Cambridge University Press, Cambridge},
 {1990}.


\bibitem{dixmier}
 {Jacques Dixmier}.
{{$C\sp*$}-algebras.}.
  {Translated from the French by Francis Jellett.},
 \textit{North-Holland Publishing Co., Amsterdam-New York-Oxford,}
 {1977}.
 


\bibitem{FF0} Clotilde Fermanian Kammerer \& V\'eronique Fischer. 
Defect measures on graded Lie groups, 
{\it Ann. Sc. Norm. Super. Pisa.} 
\textbf{21}(5):207-291 (2020).


\bibitem{FF1}  Clotilde Fermanian Kammerer \& V\'eronique Fischer. 
Semi-classical analysis on H-type groups, 
{\it Science China, Mathematics}, 
\textbf{62}(6):1057-1086 (2019). 

\bibitem{FF2}  Clotilde Fermanian Kammerer \& V\'eronique Fischer. 
Quantum evolution and sub-laplacian operators on groups of Heisenberg type, 
{\it Journal of Spectral Theory}, 
\textbf{11}(3):1313-1367  (2021).

\bibitem{FL} Clotilde Fermanian Kammere \& Cyril Letrouit. 
Observability and controllability for the Schroedinger equation on quotients of groups of Heisenberg type,  
{\it Journal de l'\'Ecole Polytechnique, Math.}, 
\textbf{8}:1459-1513 (2021).

 
  \bibitem{FJFA2015} 
 V\'eronique Fischer.
 {Intrinsic pseudodifferential calculi on any compact {L}ie group},
 \emph{J. Funct. Anal.},
\textbf{268}(11):3404-3477 (2015).

 
 
 \bibitem{FJFA2020} 
 V\'eronique Fischer. 
 {Differential structure on the dual of a compact {L}ie group},
 \emph{J. Funct. Anal.}, 
\textbf{279}(3):108555 (2020).

 \bibitem{FR} V\'eronique Fischer \& Michael Ruzansky.  
 {Quantization on nilpotent Lie groups},
 {\it Progress in Mathematics}, 314,
 Birkh\"auser Basel, 2016.
 
 \bibitem{FRSob} V\'eronique Fischer \& Michael Ruzansky. 
 {Sobolev spaces on graded {L}ie groups},
 \emph{Ann. Inst. Fourier (Grenoble)},
 \textbf{67}(4):1671-1723 (2017).

  
  \bibitem{FRCras} 
  V\'eronique Fischer \& Michael Ruzansky.  
  {Lower bounds for operators on graded {L}ie groups},
 \emph {C. R. Math. Acad. Sci. Paris},
\textbf{351}(1-2):13-18 (2013).

 
 \bibitem{folland75}
 {Gerald Folland}.
{Subelliptic estimates and function spaces on nilpotent {L}ie groups},
\emph{Ark. Mat.},
\textbf {13}(2):161-207 (1975).

  
\bibitem{folland+stein_82}
 {Gerald Folland \& Elias Stein}.
 {Hardy spaces on homogeneous groups},
    {\it Mathematical Notes}, {28},
 {Princeton University Press},
  {1982}.

\bibitem{folland}
 {Gerald Folland}.
 {Harmonic analysis in phase space},
\emph{Annals of Mathematics Studies}, {122},
{Princeton University Press, Princeton, NJ},
 {1989}.

\bibitem{Fuhr}
Hartmut F\"uhr.
{Abstract harmonic analysis of continuous wavelet transforms},
\emph {Lecture Notes in Mathematics}, {1863},
 {Springer-Verlag, Berlin},
 {2005}.

 \bibitem{gerard_leichtnam} 
 Patrick G{\'e}rard and {\'E}ric Leichtnam.
\newblock Ergodic properties of eigenfunctions for the {D}irichlet problem.
\newblock {\em Duke Math. J.}, 
\textbf{71}(2):559-607 (1993).

 \bibitem{ho}Lars H\"{o}rmander. 
 The Analysis of Linear Partial Differential  Operators I-III, Springer Verlag (1983-85). 

\bibitem{LS} Caroline Lasser and David Sattlegger. 
Discretising the Herman-Kluk Propagator, 
{\it Numerische Mathematik},
\textbf{137}(1):119-157 (2017).

\bibitem{lemarie}
Gilles Lemarié.
Base d’ondelettes sur les groupes de Lie stratifiés,
{\it Bulletin de la SMF}, 
{\bf 117}(2):211-232 (1989).
   
\bibitem{Lerner} 
 {Nicolas Lerner},
{Metrics on the phase space and non-selfadjoint pseudodifferential operators},
\emph{Pseudodifferential Operators. Theory and Applications} 
 {3},
{Birkh\"{a}user Verlag, Basel},
 {2010}.

 \bibitem{robert}
 Didier Robert. 
 On the Herman-Kluk Semiclassical Approximation. 
 {\it Rev. Math. Phys.} 
 \textbf{22}(10):1123-1145  (2010).
 
\bibitem{RT}
 {Michael Ruzhansky \& Ville Turunen}.
 {Pseudodifferential operators and symmetries},
 \emph{Pseudodifferential Operators. Theory and Applications},
 {2},
 {Birkh\"{a}user Verlag, Basel},
  {2010}.
  
  \bibitem{RTGarding}
 {Michael Ruzhansky \& Ville Turunen}.
{Sharp {G}\aa rding inequality on compact {L}ie groups},
\emph{J. Funct. Anal.},
 \textbf{260}(10):2881-2901 (2011).

   
\bibitem{RTW}
 {Michael Ruzhansky, Ville Turunen, and Jens  Wirth}.
 {H\"{o}rmander class of pseudodifferential operators on compact {L}ie groups and global hypoellipticity},
  \emph{J. Fourier Anal. Appl.},
 \textbf{20}(3)
(2014).
 
\bibitem{swart_rousse} Torben Swart and Vidian Rousse. 
A mathematical justification for the Herman-Kluk Propagator, 
{\it 
Comm. Math. Phys.} {\bf 286}(2): 725-750  (2009). 

 
 \bibitem{Taylor}
 {Michael Taylor}.
  {Noncommutative harmonic analysis},
  \textit{Mathematical Surveys and Monographs}, {22},
{American Mathematical Society, Providence, RI},
 {1986}.


 \bibitem{varo}
 {Nicolas Varopoulos, Laurent Saloff-Coste, and Thierry Coulhon.}
{Analysis and geometry on groups},
  \emph{Cambridge Tracts in Mathematics}, {100},
{Cambridge University Press, Cambridge},
 {1992}.

 \bibitem{Zwobook}
Maciej Zworski.
 {Semiclassical analysis},
 {\it Graduate Studies in Mathematics}, {138},
\newblock American Mathematical Society, Providence, RI, 2012.


 
\end{thebibliography}
 \end{document}